\newcommand{\dps}{\displaystyle}
\def\bq{\begin{equation}}
	\def\eq{\end{equation}}
\def\beq{\begin{equation*}}
	\def\eeq{\end{equation*}}
\def\brr{\bq\begin{array}{r@{}l}}
	\def\err{\end{array}\eq}
\def\bry{\beq\begin{array}{r@{}l}}
	\def\ery{\end{array}\eeq}
\def\vPhi{{\vec{\Phi}}}
\def\ve{{\vec{e}}}
\font\tenbi=cmmib10   at 11 pt
\font\sevenbi=cmmib10 at 9pt
\font\fivebi=cmmib7 at 6pt
\def\bi{\fam\bifam\tenbi}
\def\x{{\bi x}}
\def\n{{\bi n}}
\def\R{\mathbb{R}}
\def\Dt {\tau}
\crefname{hypothesis}{Hypothesis}{Hypotheses}
\title{A linear unconditionally structure-preserving L1 scheme for the time-fractional Allen-Cahn equation\thanks{
Submitted to the editors DATE.
\funding{D. Hou is partially supported by Natural Science Foundation of China grant 12001248 and 12571387, and the NSF of Jiangsu Province grant
BK20201020.
Z. Qiao is partially supported by Hong Kong Research Grants Council RFS Grant RFS2021-5S03 and GRF Grants 15302122 and 15305624, and CAS AMSS-PolyU Joint Laboratory of Applied Mathematics (JLFS/P-501/24).
 T. Tang is partially supported by the Guangdong Provincial Key Laboratory of Interdisciplinary Research and Application for Data Science under UIC 2022B12120100066, and by Science Challenge Project (Grant TZ2018001) and NSFC 11731006 and K20911001.
}}}
\author{Dianming Hou\thanks{School of Mathematics and Statistics, Jiangsu Normal University, Xuzhou, Jiangsu 221116, China.
  (\email{dmhou@jsnu.edu.cn}).}
\and Zhonghua Qiao\thanks{Corresponding author. Department of Applied Mathematics, The Hong Kong Polytechnic University, Hung Hom, Kowloon, Hong Kong.
  (\email{zqiao@polyu.edu.hk}).}
  \and Tao Tang\thanks{School of Mathematics and Statistics, Guangzhou Nanfang College, Guangzhou, Guangdong Province, China.
  (\email{ttang@nfu.edu.cn}).}
}
\begin{document}
\graphicspath{{figures/},}
\maketitle

\begin{abstract}
 As a variational phase-field model, the time-fractional Allen–Cahn (TFAC) equation enjoys the maximum bound principle (MBP) and a variational energy dissipation law. In this work, we develop and analyze linear, structure-preserving time-stepping schemes for TFAC, including first-order and $\min\{1+\alpha, 2-\alpha\}$-order L1 discretizations, together with fast implementations based on the sum-of-exponentials (SOE) technique. A central feature of the proposed linear schemes is their unconditional preservation of both the discrete MBP and the variational energy dissipation law on general temporal meshes, including graded meshes commonly used for these problems. Leveraging the MBP of the numerical solutions, we establish sharp error estimates by employing the time-fractional Gro\"nwall inequality. Finally, numerical experiments validate the theoretical results and demonstrate the effectiveness of the proposed schemes with an adaptive time-stepping strategy.
\end{abstract}

\begin{keywords}
	Time fractional Allen-Cahn; Time-stepping scheme; Unconditional energy dissipation; MBP preserving
 \end{keywords}

\begin{AMS}
	65M12,  65M15,  35K35
\end{AMS}
\section{Introduction}
\setcounter{equation}{0}

In this paper, we focus on numerical investigation on the following time-fractional Allen-Cahn (TFAC) model, which can be written as:
\begin{align}\label{TFAC}
_{0}{}\!D^{\alpha}_{t}\phi-\varepsilon^{2}\Delta\phi+f(\phi)=0,\quad(\x,t)\in\Omega\times(0,T],
\end{align}
a variant of gradient flows that incorporates a modified dissipation mechanism.
Here, $\Omega$ is a bounded Lipschitz domain in $\mathbb{R}^{d} (d = 1,2,3)$, $T>0$ is the
terminal time, $\phi(\x,t)$ denotes the state (or phase) function, $f(\phi)$ is a nonlinear potential function, $\varepsilon>0$ is related to the diffuse interface width and $_{0}{}\!D^{\alpha}_{t}$ is the Caputo fractional derivative defined by
\begin{align}\label{frac_d}
_{0}{}\!D^{\alpha}_{t}\phi(t)={}_{0}{}\!I^{1-\alpha}_{t}\phi'(t),\quad 0<\alpha<1
\end{align}
with the frational Riemann-Liouville integral operator $_{0}{}\!I^{\alpha}_{t}$ given by
\begin{align}
_{0}{}\!I^{\alpha}_{t}\phi(t)=\frac{1}{\Gamma(\alpha)}\int_{0}^{t}(t-s)^{\alpha-1}\phi(s)ds.
\end{align}
The above TFAC model \eqref{TFAC} satisfies a variational energy dissipation law \cite{LTZ21}:
\bq\label{Eg_law}
\frac{d\mathcal{E}_{\alpha}[\phi]}{dt}\leq-\frac{t^{\alpha-1}}{2\Gamma(\alpha)} \Big\| \frac{\delta E[\phi]}{\delta \phi}\Big\|^{2}\leq 0,
\eq
where the variational energy $\mathcal{E}_{\alpha}[\phi]$ is defined by
\bq\label{eqq1}
\mathcal{E}_{\alpha}[\phi]:=E[\phi]+\frac{1}{2}{}_{0}\!I^{\alpha}_{t}\Big\| \frac{\delta E[\phi]}{\delta \phi}\Big\|^{2},
\eq
and the free energy $E[\phi]$ is given by
$$E[\phi]=\int_{\Omega}\frac{\varepsilon^{2}}{2}|\nabla\phi|^{2}+F(\phi)d\x$$
with $F'(\phi)=f(\phi).$
Another intrinsic property of the TFAC equation \eqref{TFAC} is the maximum bound principle (MBP), which guarantees the physical admissibility of the phase variable: if the initial data satisfy $|\phi(\x,0)| \leq 1$ for all $\x \in \Omega$, then the solution remains strictly bounded within $[-1,1]$ for all $\x \in \Omega$ and $t \geq 0$; see \cite{TYZ18} for further discussions. These two intrinsic properties are essential for the thermodynamic consistency and physical reliability of the model, and thus preserving them at the discrete level is a key challenge in numerical analysis.

Fractional gradient flows have drawn growing interest recently due to their inherent memory effects and nonlocality. A fractional derivative, defined as a weighted average over the entire past history, links the current rate of change to accumulated past rates and provides a natural framework for modeling viscoelastic materials, polymers, and other complex media. Key early motivations and results are collected in surveys and foundational works such as \cite{ASS16,DJLQ18,LWY17,LCWZ18,QW22,SXK16,ZLWW14}.
Numerical strategies for time-fractional phase-field models have diversified significantly. For instance, Tang et al. \cite{TYZ18} established an integral-type energy dissipation law and analyzed the L1 time-discretization for time-fractional derivative on uniform meshes, achieving first-order accuracy in an energy--stable sense. Du et al. \cite{DYZ20} designed convex--splitting and weighted-stabilization schemes and proved $\alpha$-order convergence on uniform meshes without imposing additional smoothness assumptions on the solution. More recently, Liao et al. \cite{JLGZ20} developed an adaptive second--order Crank--Nicolson method via the scalar auxiliary variable (SAV) framework for the time-fractional molecular beam epitaxy (MBE) model, and demonstrated its unconditional stability on nonuniform meshes.
The notion of a nonlocal energy dissipation law for time-fractional phase-field models was first derived in \cite{HX21} for arbitrary temporal meshes. Building on this, linear high-order and modified nonlocal energy-dissipation schemes were developed for TFAC and time-fractional MBE using the SAV approach \cite{HQT23,HX21, HX22}.
Further progress includes first-order stabilized semi-implicit L1 schemes with discrete fractional and weighted energy laws \cite{QTY25}, and and extensions to a $(2-\alpha)$-order L1-SAV scheme with energy stability under nonuniform time discretization. 
A $(1+\alpha)$-order nonlinear scheme with variable steps based on Crank–Nicolson was studied for TFAC, preserving the discrete variational energy law \eqref{Eg_law}   unconditionally while enforcing a time-step restriction to guarantee MBP \cite{LTZ21}.
 A subsequent second-order nonlinear scheme for time-fractional phase-field equations preserved the discrete energy law \eqref{Eg_law} under a step-ratio constraint \cite{LLL23}.
A common challenge across these nonlinear schemes is the need to solve nonlinear algebraic systems at each time step, accompanied by time-step restrictions to ensure solvability and stability. This motivates the development of efficient high-order linear, structure-preserving schemes for TFAC.
In this line, \cite{JLZ20} introduced two linear, first-order, fast L1 time-stepping schemes (backward Euler and stabilized semi-implicit) for TFAC and proved MBP preservation. However, whether these linear schemes satisfy a discrete energy-dissipation law remained unresolved, leaving an open question and a prompt for further investigation. Overall, the field highlights a pressing need for linear, high-order, structure-preserving methods that maintain MBP and energy-dissipation properties on general (including nonuniform or adaptive) temporal meshes, while avoiding nonlinear solves at each step. 

The objective of this paper is to develop and analyze linear high-order schemes for the TFAC equation that unconditionally preserve the MBP and the variant energy dissipation law \eqref{Eg_law}. The key idea is to combine a linear stabilization technique with a predictor-corrector strategy based on the L1 time-stepping approximation of the fractional derivative.
The main contributions of the paper are summarized as follows:
\begin{itemize}
\item We construct $\min\{1+\alpha, 2-\alpha\}$-order linear predictor-corrector L1 stabilized schemes for the TFAC equation, which unconditionally preserve both the discrete MBP and the discrete variant of the energy dissipation law \eqref{Eg_law}. The schemes are fully linear and easy to implement, requiring the solution of only two Poisson-type equations with constant coefficients at each time step.

\item We rigorously establish the discrete preservation of the variant energy dissipation law \eqref{Eg_law} for both the first-order and the proposed $\min\{1+\alpha, 2-\alpha\}$-order L1 stabilized schemes.

\item We extend the fractional discrete Gr\"onwall inequality in Theorem 3.1 of \cite{LMZ19} to the case $\theta=1$. This extension plays a crucial role in eliminating the time-step size restriction in the error analysis of the proposed linear schemes, a limitation that inevitably arises when employing the standard fractional discrete Gr\"onwall inequality with $0 \leq \theta < 1$ in \cite{LMZ19}.
\end{itemize}

The remainder of the paper is organized as follows. In Section 2, we construct the first-order and the $\min\{1+\alpha, 2-\alpha\}$-order linear L1 stabilized schemes for the TFAC equation, and establish their unconditional preservation of the MBP and the variant energy dissipation law on general time grids. Section 3 provides a rigorous convergence analysis of the fast linear predictor-corrector stabilized L1 scheme \eqref{fL1_h} for the TFAC problem \eqref{TFAC}. Section 4 presents a series of numerical experiments that validate the theoretical results and demonstrate the efficiency of the proposed schemes. Finally, concluding remarks are given in Section 5.

\section {Fully discrete linear stabilized L1 schemes}\label{sect2a}
\setcounter{equation}{0}
Without loss of generality, we focus on the two-dimensional case ($d=2$) of \eqref{TFAC} subject to the homogeneous Neumann boundary condition, i.e., $\frac{\partial \phi}{\partial \n}\big|_{\partial \Omega}=0$.
The proposed scheme and the associated theoretical results can be readily extended to higher dimensions and to other types of boundary conditions.
\subsection{Spatial discretization by central difference}
We use the notations and preliminary results of the central difference function spaces and operators reported in~\cite{BHLWWZ13,BLWW13,HJQ22,LSR19,SWWW12,WISE10}. For more complete details, one can refer to these works.
For simplicity, we consider a square computational domain $\Omega=(0,L)\times(0,L),$ and a uniform spatial grid spacing $h=L/M$.
Let $E_{M}$ and $C_{M}$ denote two types of point sets defined at the space nodes and at the midpoints of the space nodes, respectively. That is
$$\mathbf{E}_{M}=\Big\{x_{i+\frac{1}{2}}=ih\;\Big|\; i=0,1, \cdots,M\Big\},\qquad \mathbf{C}_{M}=\Big\{x_{i}=\Big(i-\frac{1}{2}\Big)h\;\Big|\; i=1, \cdots,M\Big\}.$$
Then, we define the following discrete grid function spaces:
\begin{align*}
& \mathcal{C}_{h}=\dps\{U: \mathbf{C}_{M}\times\mathbf{C}_{M}\rightarrow\R\;\big|\;U_{i,j}, \;1\leq i,j\leq M\},\\
 & e^{x}_{h}=\dps\{U: \mathbf{E}_{M}\times\mathbf{C}_{M}\rightarrow\R\;\big|\;U_{i+\frac{1}{2},j}, \;0\leq i\leq M,\;1\leq j\leq M\},\\
 & e^{y}_{h}=\dps\{U: \mathbf{C}_{M}\times\mathbf{E}_{M}\rightarrow\R\;\big|\;U_{i,j+\frac{1}{2}}, \;1\leq i\leq M,\;0\leq j\leq M\},\\
 & e^{x}_{0,h}=\dps\{U\in e^{x}_{h}\;\big|\;U_{\frac{1}{2},j}=U_{M+\frac{1}{2},j}=0,\; 1\leq j\leq M\},\\
 & e^{y}_{0,h}=\dps\{U\in  e^{y}_{h}\;\big|\;U_{i,\frac{1}{2}}=U_{i,M+\frac{1}{2}}=0,\; 1\leq i\leq M\}.
\end{align*}
Next, we will define discrete gradient and divergence operators, which are compatible with the homogeneous Neumann boundary condition.
For any $U\in \mathcal{C}_h$, the discrete gradient operator $\nabla_{h} =(\nabla^x_{h}, \nabla^y_{h}): \mathcal{C}_h\rightarrow( e_{0,h}^{x},  e^{y}_{0,h})$ is given by
\begin{align*}
(\nabla^x_{h}U)_{i+\frac{1}{2},j}=\dps\frac{U_{i+1,j}-U_{i, j}}{h}, \qquad 1\leq i\leq M-1, 1\leq j\leq M,\\
(\nabla^y_{h}U)_{i,j+\frac{1}{2}}=\dps\frac{U_{i,j+1}-U_{i, j}}{h}, \qquad 1\leq i\leq M, 1\leq j\leq M-1.
\end{align*}
 In addition, the discrete divergence operator $\nabla_{h}\cdot:( e_{h}^{x},  e^{y}_{h})\rightarrow\mathcal{C}_h$ is defined by
\begin{align*}
(\nabla_{h}\cdot (U^{x},U^{y})^T)_{i,j}= \frac{U^x_{i+\frac{1}{2},j}-U^x_{i-\frac{1}{2},j}}{h}+ \frac{U^y_{i,j+\frac{1}{2}}-U^y_{i,j-\frac{1}{2}}}{h},\quad 1\leq i,j\leq M,
\end{align*}
for any $(U^{x},U^{y})^{T}\in ( e_{h}^{x},  e^{y}_{h}).$
Then, we obtain the discrete Laplacian $\Delta_{h}:\mathcal{C}_{h}\rightarrow\mathcal{C}_{h}$:
\begin{align*}
(\Delta_{h}U)_{i,j}=(\nabla_{h}\cdot(\nabla_{h} U))_{i,j},\quad 1\leq i,j\leq M.
\end{align*}
The corresponding discrete inner-products are defined as follows:
\begin{align*}
&
\big\langle U,V\big\rangle_{\Omega}=\dps h^{2}\sum_{i,j=1}^{M}U_{i,j}V_{i,j}, &&  \forall U,V\in\mathcal{C}_{h},\\
&[U^x,V^x]_{x}=\big\langle a_{x}(U^xV^x),1\big\rangle _{\Omega},&&  \forall U^x,V^x\in e^{x}_{h},\\
&[U^y,V^y]_{y}=\dps\big\langle a_{y}(U^yV^y),1\big\rangle _{\Omega},&&  \forall U^y,V^y\in e^{y}_{h},\\
& [(U^{x},U^{y})^{T},(V^{x},V^{y})^{T}]_{\Omega}=[U^{x},V^{x}]_{x}+[U^{y},V^{y}]_{y},
\end{align*}
where $a_{x}: e^{x}_{h}\rightarrow\mathcal{C}_h$ and $a_{y}: e^{y}_{h}\rightarrow\mathcal{C}_h$ are the two average operators
 $$(a_{x}U)_{i,j}=({U_{i+1/2,j}+U_{i-1/2,j}})/2\quad\text{
 and}\quad (a_{y}U)_{i,j}=({U_{i,j+1/2}+U_{i,j-1/2}})/2$$ for $1\leq i,j\leq M$, respectively.
Then,  for any $U\in\mathcal{C}_{h}$, its corresponding discrete  $L^2, H^1$ semi-norms and norms, and the $L^{\infty}$-norm are respectively given by:
\begin{align*}
&\dps\|U\|^{2}_{h}=\big\langle U,U\big\rangle _{\Omega},&& \|\nabla_{h}U\|^{2}_{h}=[\nabla_{h}U,\nabla_{h}U]_{\Omega}=[d_{x}U,d_{x}U]_{x}+[d_{y}U,d_{y}U]_{y},\\
&\dps\|U\|^{2}_{H^{1}_{h}}=\|U\|^{2}_{h}+\|\nabla_{h}U\|^{2}_{h}, &&  \|U\|_{\infty}=\max_{0\leq i\leq N}\sum_{j=0}^{N}|U_{i,j}|.
\end{align*}
From these above definitions, we obtain the following results.
\begin{lemma}[\!\cite{LSR19,WW88}]\label{intpat}
For any $U,V\in\mathcal{C}_{h}$, it holds
\begin{align}
-\big\langle \Delta_{h}U,V\big\rangle _{\Omega}=[\nabla_{h}U,\nabla_{h}V]_{\Omega}.
\end{align}
\end{lemma}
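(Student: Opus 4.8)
The plan is to reduce the claimed discrete integration-by-parts identity to one-dimensional summation by parts (Abel summation) in each coordinate direction, using only the definitions of $\Delta_h=\nabla_h\cdot\nabla_h$, of the discrete inner products, and the fact that $\nabla^x_h U\in e^x_{0,h}$ and $\nabla^y_h U\in e^y_{0,h}$, so that their values at the boundary midpoints (those with first index $\frac{1}{2}$ or $M+\frac{1}{2}$, respectively second index $\frac{1}{2}$ or $M+\frac{1}{2}$) vanish.

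First I would expand the left-hand side: since $\Delta_h U=\nabla_h\cdot(\nabla^x_h U,\nabla^y_h U)^T$, the definitions of $\nabla_h\cdot$ and of $\langle\cdot,\cdot\rangle_\Omega$ give
\begin{align*}
-\langle\Delta_h U,V\rangle_\Omega
&= -h\sum_{i,j=1}^{M}\big[(\nabla^x_h U)_{i+\frac{1}{2},j}-(\nabla^x_h U)_{i-\frac{1}{2},j}\big]V_{i,j} \\
&\quad -h\sum_{i,j=1}^{M}\big[(\nabla^y_h U)_{i,j+\frac{1}{2}}-(\nabla^y_h U)_{i,j-\frac{1}{2}}\big]V_{i,j}.
\end{align*}
The two groups are treated symmetrically, so I focus on the $x$-group. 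For each fixed $j$, shifting the summation index in the term containing $(\nabla^x_h U)_{i-\frac{1}{2},j}$ and collecting terms produces a boundary contribution $(\nabla^x_h U)_{M+\frac{1}{2},j}V_{M,j}-(\nabla^x_h U)_{\frac{1}{2},j}V_{1,j}$, which vanishes because $\nabla^x_h U\in e^x_{0,h}$, together with an interior sum that, upon writing $V_{i+1,j}-V_{i,j}=h(\nabla^x_h V)_{i+\frac{1}{2},j}$, equals $h^{2}\sum_{j=1}^{M}\sum_{i=1}^{M-1}(\nabla^x_h U)_{i+\frac{1}{2},j}(\nabla^x_h V)_{i+\frac{1}{2},j}$.

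Next I would identify this quantity with $[\nabla^x_h U,\nabla^x_h V]_x$. By definition $[W^x,Z^x]_x=\langle a_x(W^xZ^x),1\rangle_\Omega=\frac{h^{2}}{2}\sum_{i,j=1}^{M}\big[(W^xZ^x)_{i+\frac{1}{2},j}+(W^xZ^x)_{i-\frac{1}{2},j}\big]$; taking $W^x=\nabla^x_h U$ and $Z^x=\nabla^x_h V$ and using that their values at first index $\frac{1}{2}$ and $M+\frac{1}{2}$ are zero, both inner sums reduce, after a shift of index, to $\sum_{j=1}^{M}\sum_{i=1}^{M-1}(\nabla^x_h U)_{i+\frac{1}{2},j}(\nabla^x_h V)_{i+\frac{1}{2},j}$, so that $[\nabla^x_h U,\nabla^x_h V]_x$ coincides exactly with the expression produced by the summation by parts. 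The $y$-direction is handled in the same way using $\nabla^y_h U\in e^y_{0,h}$. Adding the two contributions and invoking $[\nabla_h U,\nabla_h V]_\Omega=[\nabla^x_h U,\nabla^x_h V]_x+[\nabla^y_h U,\nabla^y_h V]_y$ yields the stated identity.

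There is no substantial obstacle here; the result is elementary discrete calculus. The only point demanding care is the bookkeeping of the half-integer index ranges together with the homogeneous boundary values built into $e^x_{0,h}$ and $e^y_{0,h}$ --- these are precisely what make the discrete gradient and divergence compatible with the homogeneous Neumann condition and what force every boundary term in the Abel summation to drop, so that no spurious discrete boundary integral survives on the right-hand side.
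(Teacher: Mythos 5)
Your argument is correct: it is the standard one‑dimensional summation‑by‑parts (Abel summation) computation in each coordinate, with the boundary terms annihilated by the zero values of $\nabla^x_hU$ and $\nabla^y_hU$ at the half‑indices $\tfrac12$ and $M+\tfrac12$ built into $e^x_{0,h}$ and $e^y_{0,h}$, and the identification of the interior sum with $[\nabla_hU,\nabla_hV]_\Omega$ via the averaging operators is carried out correctly. The paper does not prove this lemma but simply cites it from the references, and the proof given there is essentially the same computation, so there is nothing further to compare.
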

For the sake of the stability of the time-stepping schemes developed later, we introduce a stabilizing parameter $\kappa\geq 0$ and reform the TFAC model \eqref{TFAC} as follows:
\begin{align}\label{TFAC1}
_{0}{}\!D^{\alpha}_{t}\phi+\kappa\phi-\varepsilon^{2}\Delta\phi+f_{\kappa}(\phi)=0,
\end{align}
where $f_{\kappa}(\phi)=f(\phi)-\kappa\phi$. It is easy to see that the above system is equivalent to the original TFAC model \eqref{TFAC}.

\subsection{Linear stabilized L1 schemes}
 Let $\tau_n=t_n-t_{n-1}, n=1,\cdots,N$ be the time-step size,
 and $\tau=\max\{\Dt_{n},n=1,\cdots,N\}$ be the maximum step size.
We recalled the popular L1 approximation \cite{LX07,LYZ19,LLZ18} to discretize the Caputo fractional derivative at $t=t_{n+1}$, as follows
  \bry
 _{0}\!D^{\alpha}_{t_{n+1}}\phi
  =&\dps\frac{1}{\Gamma(1-\alpha)}\sum_{k=0}^{n}\int_{t_k}^{t_{k+1}}(t_{n+1}-s)^{-\alpha}\phi_{s}(s)ds\\[9pt]
  =&\dps\sum_{k=0}^{n}\frac{1}{\Gamma(1-\alpha)}\frac{\phi(t_{k+1})-\phi(t_{k})}{\Dt_{k+1}}\int_{t_{k}}^{t_{k+1}}(t_{n+1}-s)^{-\alpha}ds + R^{n+1}_{\tau}\\[12pt]
 :=& L^\alpha_{t} \phi(t_{n+1})+R^{n+1}_{\tau},
  \ery
where $R^{n+1}_{\tau}$ is the corresponding local truncation error and the $L^{\alpha}_{t}$ is defined by
\brr\label{eq1}
L^\alpha_{t} \phi(t_{n+1})
&\dps=\sum_{k=0}^{n}a^{(n)}_{n-k}(\phi(t_{k+1})-\phi(t_{k}))\\
&\dps=a^{(n)}_{0}\phi(t_{n+1})-\sum_{k=1}^{n}(a^{(n)}_{n-k}-a^{(n)}_{n-k+1})\phi(t_{k})-a^{n}_{n}\phi(t_{0})
\err
with discrete convolution kernels $a^{(n)}_{k}$ given by
\begin{align*}
&&  a^{(n)}_{n-k}=\frac{1}{\Gamma(1-\alpha)\Dt_{k+1}}\int_{t_{k}}^{t_{k+1}}(t_{n+1}-s)^{-\alpha}ds>0, k=0,1,\cdots,n.
\end{align*}
Moreover, the above $a^{(n)}_{n-k}$ are positive and decreasing, see also \cite{LLZ18,LYZ19}
\begin{align}
a^{(n)}_{n-k}>0,~~a^{(n)}_{n-k-1}>a^{(n)}_{n-k},~~k=0,1,\cdots,n-1.
\end{align}

Next, we recall two useful discrete tools in \cite{LTZ24,LZ21,YWCL22}: the discrete orthogonal convolution (DOC) kernels $\{\theta^{(n)}_{n-k}\}_{k=0}^{n}$  given by
\bq\label{eq_1}
\theta^{(n)}_{0}=\frac{1}{a^{(n)}_{0}} \mbox{ and } \theta^{(n)}_{n-k}=-\frac{1}{a^{(k)}_{0}}\sum_{j=k+1}^{n}\theta^{(n)}_{n-j}a^{(j)}_{j-k}\mbox{ for }0\leq k\leq n-1,
\eq
and the discrete complementary convolution (DCC) kernels  $\{p^{(n)}_{n-k}\geq0\}_{k=0}^{n}$:
\bq\label{eq_2}
p^{(n)}_{n-k}=\sum_{j=k}^{n}\theta^{(j)}_{j-k},~~ 0\leq k\leq n.
\eq
Then, it holds that
$$\dps\sum_{j=k}^{n}\theta^{(n)}_{n-j}a^{(j)}_{j-k}=\delta_{n,k},~~\sum_{j=k}^{n}p^{(n)}_{n-j}a^{(n)}_{j-k}=1\mbox{ for } 0\leq k\leq n.$$
Moreover, we have following useful lemma, which plays an important in derivation of the discrete variational energy law of the numerical schemes.
\begin{lemma}\cite{LTZ24,YWCL22}\label{lemm1}
For any real sequence $\{\psi^{k}\}_{k=0}^{n}$, we have
\beq
2\psi^{n}\sum_{k=0}^{n}a^{(n)}_{n-k}\psi^{k}\geq a^{(n)}_{0}|\psi^{n}|^{2}+ \sum_{k=0}^{n}p^{(n)}_{n-k}\Big(\sum_{j=0}^{k}a^{(k)}_{k-j}\psi^{j}\Big)^{2}-\sum_{k=0}^{n-1}p^{(n-1)}_{n-1-k}\Big(\sum_{j=0}^{k}a^{(k)}_{k-j}\psi^{j}\Big)^{2}.
\eeq
\end{lemma}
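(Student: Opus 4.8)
The plan is to peel this inequality down to two structural facts about the discrete orthogonal convolution (DOC) kernels, after which only the Cauchy--Schwarz inequality remains. Throughout, write $v^{k}:=\sum_{j=0}^{k}a^{(k)}_{k-j}\psi^{j}$, so that the left-hand side of the claim is exactly $2\psi^{n}v^{n}$ (note that $\sum_{k=0}^{n}a^{(n)}_{n-k}\psi^{k}=v^{n}$, since the superscript there equals $n$), and set $G^{m}:=\sum_{k=0}^{m}p^{(m)}_{m-k}(v^{k})^{2}$, so that the right-hand side equals $a^{(n)}_{0}|\psi^{n}|^{2}+G^{n}-G^{n-1}$. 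The first step is to telescope $G^{n}-G^{n-1}$: from \eqref{eq_2} one has $p^{(n)}_{0}=\theta^{(n)}_{0}$ and $p^{(n)}_{n-k}-p^{(n-1)}_{n-1-k}=\theta^{(n)}_{n-k}$ for $0\le k\le n-1$, whence $G^{n}-G^{n-1}=\sum_{k=0}^{n}\theta^{(n)}_{n-k}(v^{k})^{2}$. Thus the lemma is equivalent to
\[
2\psi^{n}v^{n}\ \ge\ a^{(n)}_{0}|\psi^{n}|^{2}+\sum_{k=0}^{n}\theta^{(n)}_{n-k}(v^{k})^{2}.
\]

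Next I would invert the lower-triangular convolution defining $v$ by means of the orthogonality $\sum_{j=k}^{n}\theta^{(n)}_{n-j}a^{(j)}_{j-k}=\delta_{n,k}$, which gives $\psi^{n}=\sum_{k=0}^{n}\theta^{(n)}_{n-k}v^{k}$. Substituting this into $2\psi^{n}v^{n}-a^{(n)}_{0}|\psi^{n}|^{2}$ and completing the square, using $a^{(n)}_{0}\theta^{(n)}_{0}=1$, produces the exact identity
\[
2\psi^{n}v^{n}-a^{(n)}_{0}|\psi^{n}|^{2}=\theta^{(n)}_{0}(v^{n})^{2}-a^{(n)}_{0}\Big(\sum_{k=0}^{n-1}\theta^{(n)}_{n-k}v^{k}\Big)^{2}.
\]
Cancelling the common term $\theta^{(n)}_{0}(v^{n})^{2}$ (which is the $k=n$ summand on the right above), the inequality to be proved collapses to
\[
a^{(n)}_{0}\Big(\sum_{k=0}^{n-1}\big(-\theta^{(n)}_{n-k}\big)v^{k}\Big)^{2}\ \le\ \sum_{k=0}^{n-1}\big(-\theta^{(n)}_{n-k}\big)(v^{k})^{2}.
\]

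To close this I would invoke two properties of the DOC kernels: (A) $\theta^{(n)}_{n-k}\le 0$ for $0\le k\le n-1$, so the weights $-\theta^{(n)}_{n-k}$ are nonnegative; and (B) $\sum_{k=0}^{n}\theta^{(n)}_{n-k}\ge 0$. Granting (A), the weighted Cauchy--Schwarz inequality yields $\big(\sum_{k=0}^{n-1}(-\theta^{(n)}_{n-k})v^{k}\big)^{2}\le\big(\sum_{k=0}^{n-1}(-\theta^{(n)}_{n-k})\big)\sum_{k=0}^{n-1}(-\theta^{(n)}_{n-k})(v^{k})^{2}$, so the last display holds as soon as $a^{(n)}_{0}\sum_{k=0}^{n-1}(-\theta^{(n)}_{n-k})\le 1$; since $\theta^{(n)}_{0}=1/a^{(n)}_{0}$, this bound is precisely (B). (For $n=1$ the Cauchy--Schwarz step is an equality, so (B) is in fact necessary as well, which is a good consistency check.)

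The main obstacle, as I see it, is establishing (A) and (B): everything above is algebraic bookkeeping, whereas these encode genuine properties of the L1 kernels. They rest on the positivity and monotonicity of $a^{(n)}_{n-k}$ recorded earlier---namely $a^{(n)}_{n-k}>0$, the in-row decay $a^{(n)}_{n-k-1}>a^{(n)}_{n-k}$, and the cross-row decay $a^{(n+1)}_{n+1-k}\le a^{(n)}_{n-k}$, all read off from $a^{(n)}_{n-k}=\frac{1}{\Gamma(1-\alpha)\tau_{k+1}}\int_{t_{k}}^{t_{k+1}}(t_{n+1}-s)^{-\alpha}\,\d s$ via the monotonicity of $(t_{n+1}-s)^{-\alpha}$ in $n$ and in $s$. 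Property (A) is proved by induction on $n$ from the recursion \eqref{eq_1}; property (B), after the rewriting $\sum_{k=0}^{n}\theta^{(n)}_{n-k}=\sum_{m=0}^{n}p^{(n)}_{m}-\sum_{m=0}^{n-1}p^{(n-1)}_{m}$, amounts to the monotone growth in $n$ of the total DCC mass $\sum_{m}p^{(n)}_{m}$. For the L1 kernel both hold on arbitrary nonuniform meshes with no step-ratio restriction---exactly the feature that renders the scheme unconditionally structure-preserving---and a complete verification is given in \cite{LTZ24,YWCL22}.
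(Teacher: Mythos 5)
The paper does not actually prove this lemma---it is imported verbatim from \cite{LTZ24,YWCL22}---so your proposal has to stand on its own. Its algebraic skeleton is correct and is essentially the standard DOC-kernel route: the telescoping $G^{n}-G^{n-1}=\sum_{k=0}^{n}\theta^{(n)}_{n-k}(v^{k})^{2}$ via \eqref{eq_2}, the inversion $\psi^{n}=\sum_{k=0}^{n}\theta^{(n)}_{n-k}v^{k}$ from the orthogonality identity, the exact completion-of-the-square identity, and the final weighted Cauchy--Schwarz step all check out, and your remark that the $n=1$ case forces property (B) is a sound consistency check. Property (B) is moreover genuinely within reach of the facts you invoke: $\sum_{k=0}^{n}p^{(n)}_{n-k}$ is the L1 solution of ${}_{0}D^{\alpha}_{t}v=1$, $v^{0}=0$, evaluated at $t_{n+1}$, and its monotonicity in $n$ follows by a one-line induction from the cross-row decay $a^{(n)}_{n-k}\le a^{(n-1)}_{n-1-k}$ together with positivity.

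The gap is in property (A). Your assertion that $\theta^{(n)}_{n-k}\le 0$ for $k\le n-1$ follows ``by induction from the recursion'' using only positivity, in-row decay, and cross-row decay is not correct. Already at the second step the recursion \eqref{eq_1} gives $\theta^{(n)}_{2}=-\frac{1}{a^{(n-2)}_{0}}\big(\theta^{(n)}_{0}a^{(n)}_{2}+\theta^{(n)}_{1}a^{(n-1)}_{1}\big)$ with $\theta^{(n)}_{1}=-\theta^{(n)}_{0}a^{(n)}_{1}/a^{(n-1)}_{0}$, so $\theta^{(n)}_{2}\le 0$ is equivalent to $a^{(n)}_{2}a^{(n-1)}_{0}\ge a^{(n)}_{1}a^{(n-1)}_{1}$. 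This is a cross-row ratio (log-convexity) condition that the three monotonicity facts do not imply: the abstract values $a^{(n-1)}_{0}=1$, $a^{(n-1)}_{1}=0.9$, $a^{(n)}_{1}=0.95$, $a^{(n)}_{2}=0.8$ satisfy all of your listed properties yet violate it. What is actually needed is that the ratio $a^{(j)}_{j-k}/a^{(j)}_{j-k-1}$ be nondecreasing in $j$, which for the L1 kernels follows from the reverse-rule-of-order-two structure of $(t-s)^{-\alpha}$ (for $s<s'$ and $t<t'$ one has $(t-s)(t'-s')\ge(t-s')(t'-s)$); with that in hand the induction on $n-k$ does close. Since (A) is precisely the ingredient that makes the lemma hold unconditionally on arbitrary meshes, your proof as written establishes only the reduction, not the lemma: you should either cite (A) explicitly as a property proved in \cite{LTZ24,YWCL22} or supply the ratio-monotonicity argument rather than attributing it to the listed monotonicity facts.
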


{\bf A first order scheme.}
Let $\Pi_{\mathcal{C}_h}$  be the operator that imposes a function onto $\mathcal{C}_h$ pointwise, and let $\vec{U}\in{\mathbb R}^{M^2}$ be the vector representation of $U\in\mathcal{C}_{h}$,
with the elements organized first along the $x$-direction and then along the $y$-direction.
We use
the equivalent system \eqref{TFAC1} to construct a first order linear fully-discrete stabilized L1  scheme for the TFAC equation \eqref{TFAC}:
given $\Phi^0=\Pi_{\mathcal{C}_h}\phi_0$, to find $\Phi^{n+1}\in\mathcal{C}_{h},$ such that
\begin{align}\label{scheme1}
 L^\alpha_t\Phi^{n+1}+\kappa\Phi^{n+1}-\varepsilon^{2}\Delta_{h}\Phi^{n+1}+f_{\kappa}(\Phi^{n})=0,\quad n=0,1,\cdots,N-1,
\end{align}
where $\Phi^{n}$ is a numerical approximation to $\phi(t_n)$ at the discrete grid points.
We simply denote the above linear stabilized first order L1 scheme as $\Phi^{n+ 1}= sFL1(\kappa,\Phi^n)$.
Furthermore, its vector form reads as follows:
  \bq\label{BDF1_tsor}
  L^\alpha_t\vPhi^{n+1}+\kappa\vPhi^{n+1}-\varepsilon^{2}D_{h}\vPhi^{n+1}+f_{\kappa}(\vPhi^{n})=0,
  \eq
where $f_{\kappa}(\vPhi^{n})=(\vPhi^{n}\big)^{.3}-(1+\kappa)\vPhi^{n}$
and the tensor form $D_{h}$ of the discrete $\Delta_{h}$ is given by
\bq\label{T_D}
D_{h}=I\otimes G+G\otimes I\in {\mathbb R}^{M^2\times M^2}
\eq
 with $I$ denoting the identity matrix (with the matched dimensions) and
\[ G=\frac{1}{h^{2}}
\begin{pmatrix}
-1&1&&&&\\
1&-2&1&&&\\
&\ddots &\ddots &\ddots&\\
&&1&-2&1&\\
&&&1&-1&\\
\end{pmatrix}_{M\times M}.\]
Let us first review several key lemmas that are fundamental for analyzing the discrete MBP of the proposed scheme \eqref{BDF1_tsor}.
Based on the definition of the variational energy $\mathcal{E}_{\alpha}[\phi]$ in \eqref{eqq1}, we introduce the corresponding discrete energy $\mathcal{E}_{\alpha,h}[\Phi^{n}]$ in the form
\begin{align}\label{dis_eg}
\mathcal{E}_{\alpha,h}[\phi^{n}]=E_{h}[\Phi^{n}]+\sum_{k=0}^{n-1}p^{(n-1)}_{n-1-k}\Big\|L^{\alpha}_{t}\Phi^{k}\Big\|^{2}_{h},
\end{align}
where the discrete energy $E_{h}[\Phi]$ is given by
\begin{align}\label{eqq2}
E_{h}[\Phi^{n}]\dps= & \frac{\varepsilon^{2}}{2}[\nabla_{h}\Phi^n,\nabla_{h}\Phi^n]_{\Omega}+\big\langle F(\Phi^{n}),1\big\rangle _{\Omega}\nonumber\\
\dps= & -\frac{h^{2}\varepsilon^{2}}{2}(\vPhi^{n})^{T}D_{h}\vPhi^{n}+h^{2}\sum_{i=1}^{M^{2}}F(\vPhi^{n}_{i}).
\end{align}
\begin{lemma}[\!\cite{HTY17,LTZ20,TY16}]\label{lemm2}
Assume that $B= (b_{i,j})$ is a real $M\times M$ matrix satisfying
\begin{align*}
 b_{i,i}<0,\quad |b_{i,i}|\geq \sum_{j=1,j\neq i}^{M}|b_{i,j}|,\quad  {i=1,2,\cdots,M.}
\end{align*}
Let  $A=aI-B$ where $a>0$ is a constant, then it holds that
\begin{align*}
\|A\overrightarrow{U}\|_{\infty}\geq a \|\overrightarrow{U}\|_{\infty},\quad \forall \overrightarrow{U}\in\R ^{M}.
\end{align*}
\end{lemma}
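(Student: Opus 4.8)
The claim is the classical fact that adding a shift $aI$ with $a>0$ to a matrix $-B$ whose rows are weakly diagonally dominant with negative diagonal entries can only enlarge the action in the maximum norm; I would prove it by a one-line estimate at the coordinate that realizes $\|\overrightarrow{U}\|_{\infty}$. (Here $\|\cdot\|_{\infty}$ is read as the vector maximum norm $\max_{i}|U_{i}|$, the relevant meaning for $\overrightarrow{U}\in\R^{M}$.)

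First I would dispose of the trivial case $\overrightarrow{U}=0$ and otherwise fix an index $i_{0}$ with $|U_{i_{0}}|=\|\overrightarrow{U}\|_{\infty}>0$. Setting $\overrightarrow{V}=A\overrightarrow{U}=(aI-B)\overrightarrow{U}$ and isolating the diagonal term,
\begin{align*}
V_{i_{0}}=(a-b_{i_{0},i_{0}})U_{i_{0}}-\sum_{j\neq i_{0}}b_{i_{0},j}U_{j}.
\end{align*}
Because $a>0$ and $b_{i_{0},i_{0}}<0$, the scalar $a-b_{i_{0},i_{0}}=a+|b_{i_{0},i_{0}}|$ is positive, so the reverse triangle inequality together with $|U_{j}|\le\|\overrightarrow{U}\|_{\infty}$ gives
\begin{align*}
|V_{i_{0}}|\ \ge\ (a+|b_{i_{0},i_{0}}|)\,|U_{i_{0}}|-\sum_{j\neq i_{0}}|b_{i_{0},j}|\,|U_{j}|\ \ge\ \Big(a+|b_{i_{0},i_{0}}|-\sum_{j\neq i_{0}}|b_{i_{0},j}|\Big)\,\|\overrightarrow{U}\|_{\infty}.
\end{align*}
The hypothesis $|b_{i_{0},i_{0}}|\ge\sum_{j\neq i_{0}}|b_{i_{0},j}|$ makes the bracket no smaller than $a$, whence $|V_{i_{0}}|\ge a\|\overrightarrow{U}\|_{\infty}$, and therefore $\|A\overrightarrow{U}\|_{\infty}=\max_{k}|V_{k}|\ge|V_{i_{0}}|\ge a\|\overrightarrow{U}\|_{\infty}$.

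There is no genuine obstacle in this lemma; the only point needing care is that one must bound $|V_{i_{0}}|$ as a whole from below, which is precisely why the argument is anchored at a maximizing coordinate and uses the reverse triangle inequality plus the sign condition $b_{i_{0},i_{0}}<0$, rather than a term-by-term estimate. In the intended application $B$ will be (essentially) $\varepsilon^{2}D_{h}$, which is weakly diagonally dominant with strictly negative diagonal by the structure of $G$, and $a$ will be the positive quantity $a^{(n)}_{0}+\kappa$ coming from the L1 kernel and the stabilization; the hypotheses then hold, so the lemma yields both the invertibility of the implicit operator in \eqref{BDF1_tsor} and the maximum-norm control needed for the discrete MBP.
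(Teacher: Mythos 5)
Your proof is correct and is exactly the standard argument: anchor at a coordinate realizing the maximum norm, peel off the (positive) diagonal coefficient $a+|b_{i_0,i_0}|$, and absorb the off-diagonal contributions via diagonal dominance. The paper itself gives no proof of this lemma (it is quoted from the cited references), and the argument in those references is the same one you give, so there is nothing to add beyond noting that your reading of $\|\cdot\|_\infty$ as the vector maximum norm on $\R^M$ is indeed the intended one despite the paper's slightly garbled definition of $\|U\|_\infty$ for grid functions.
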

\begin{lemma}[\!\cite{HJQ22,TY16}]\label{para}
If the stabilizing parameter $\kappa$ satisfies
\bq \label{kappa_cd}
\kappa\geq \max_{\rho\in[-1,1]}f'(\rho),
\eq
 then it holds that
\bq\label{eqn1_4}
\big|f_{\kappa}(\rho)\big|\leq \kappa,\quad\forall \rho\in[-1,1].
\eq
\end{lemma}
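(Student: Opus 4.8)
The plan is to derive \eqref{eqn1_4} from the monotonicity of the shifted nonlinearity $f_\kappa(\rho)=f(\rho)-\kappa\rho$ on $[-1,1]$ together with its boundary values. First I would note that $f_\kappa$ is $C^1$ with $f_\kappa'(\rho)=f'(\rho)-\kappa$, and that the hypothesis \eqref{kappa_cd} gives $f'(\rho)\le\kappa$ for every $\rho\in[-1,1]$; hence $f_\kappa'(\rho)\le 0$ on the whole interval and $f_\kappa$ is non-increasing there.

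Second, monotonicity immediately sandwiches the value of $f_\kappa$ between its endpoint values: for all $\rho\in[-1,1]$ one has $f_\kappa(1)\le f_\kappa(\rho)\le f_\kappa(-1)$. It then suffices to compute the two endpoints. For the double-well potential $f(\phi)=\phi^3-\phi$ of the Allen--Cahn equation (consistent with the form $f_\kappa(\vPhi^n)=(\vPhi^n)^{.3}-(1+\kappa)\vPhi^n$ appearing in \eqref{BDF1_tsor}), one has $f(\pm1)=0$, so $f_\kappa(-1)=\kappa$ and $f_\kappa(1)=-\kappa$. Substituting these in yields $-\kappa\le f_\kappa(\rho)\le\kappa$, i.e. $|f_\kappa(\rho)|\le\kappa$ for all $\rho\in[-1,1]$, which is exactly \eqref{eqn1_4}.

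I do not expect a genuine obstacle here: once the question is reduced to checking the sign of $f_\kappa'$, it is a one-line argument. The only point meriting a word of care is the use of the endpoint data — the bound with the specific constant $\kappa$ on the right-hand side of \eqref{eqn1_4} relies on $f(-1)\le 0\le f(1)$ (here realized as the equalities $f(\pm1)=0$), which is precisely the sign condition on the reaction term that underlies the continuous MBP for \eqref{TFAC}. For a more general admissible $f$ satisfying $f(-1)\le0\le f(1)$ the same proof goes through verbatim; if that sign condition were to fail, the right-hand side of \eqref{eqn1_4} would have to be enlarged to $\max\{\kappa+f(-1),\ \kappa-f(1)\}$.
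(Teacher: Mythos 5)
Your proof is correct and is exactly the standard argument from the cited references \cite{HJQ22,TY16} (the paper itself states the lemma without proof): $f_\kappa'=f'-\kappa\le 0$ on $[-1,1]$ makes $f_\kappa$ non-increasing, and the double-well nonlinearity $f(\rho)=\rho^3-\rho$ gives $f_\kappa(-1)=\kappa$, $f_\kappa(1)=-\kappa$. Your closing remark correctly identifies the implicit hypothesis $f(-1)\le 0\le f(1)$ on which the constant $\kappa$ in \eqref{eqn1_4} depends.
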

Next, we establish the MBP preservation of the proposed first-order scheme \eqref{BDF1_tsor}, as stated in the following theorem.
\begin{theorem}\label{thmMBP}
Assume that $\|\vPhi^{0}\|_{\infty}\leq 1$ and the stabilizing parameter $\kappa$ satisfies \eqref{kappa_cd}.
Then, the first oder scheme \eqref{BDF1_tsor} conditionally preserves the discrete MBP, i.e.,
   $\|\vPhi^{n+1}\|_{\infty}\leq1$ for all $n=0,1,\cdots,N-1$. Moreover, it unconditionally satisfies the following discrete variational energy law:
   \bq\label{eqq4}
   \mathcal{E}_{\alpha,h}[\Phi^{n+1}]\leq \mathcal{E}_{\alpha,h}[\Phi^{n}],
   \eq
where $\mathcal{E}_{\alpha,h}[\Phi^{n}]$ is defined in \eqref{dis_eg}.
\end{theorem}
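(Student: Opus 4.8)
The plan is to prove both assertions together by induction on $n$, starting from the hypothesis $\|\vPhi^{0}\|_{\infty}\le 1$ and assuming $\|\vPhi^{k}\|_{\infty}\le 1$ for $0\le k\le n$. By the identity \eqref{eq1}, the scheme \eqref{BDF1_tsor} reads $\big((a^{(n)}_{0}+\kappa)I-\varepsilon^{2}D_{h}\big)\vPhi^{n+1}=\sum_{k=1}^{n}\big(a^{(n)}_{n-k}-a^{(n)}_{n-k+1}\big)\vPhi^{k}+a^{(n)}_{n}\vPhi^{0}-f_{\kappa}(\vPhi^{n})$; since $a^{(n)}_{0}+\kappa>0$ and, by \eqref{T_D}, $-D_{h}$ is symmetric positive semidefinite, the coefficient matrix is symmetric positive definite, so $\vPhi^{n+1}$ exists and is unique. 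To bound it I would apply Lemma~\ref{lemm2} with $a=a^{(n)}_{0}+\kappa$ and $B=\varepsilon^{2}D_{h}$ (by \eqref{T_D} the matrix $B$ has a strictly negative diagonal and is weakly diagonally dominant), which gives $\big(a^{(n)}_{0}+\kappa\big)\|\vPhi^{n+1}\|_{\infty}\le\big\|\sum_{k=1}^{n}(a^{(n)}_{n-k}-a^{(n)}_{n-k+1})\vPhi^{k}+a^{(n)}_{n}\vPhi^{0}-f_{\kappa}(\vPhi^{n})\big\|_{\infty}$. Bounding the right-hand side by the triangle inequality, using that the coefficients $a^{(n)}_{n-k}-a^{(n)}_{n-k+1}$ are nonnegative (monotonicity of the L1 kernels recalled above), the induction hypothesis, and $\|f_{\kappa}(\vPhi^{n})\|_{\infty}\le\kappa$ from Lemma~\ref{para} (which uses \eqref{kappa_cd} and $\|\vPhi^{n}\|_{\infty}\le1$), I obtain $\big(a^{(n)}_{0}+\kappa\big)\|\vPhi^{n+1}\|_{\infty}\le\sum_{k=1}^{n}(a^{(n)}_{n-k}-a^{(n)}_{n-k+1})+a^{(n)}_{n}+\kappa=a^{(n)}_{0}+\kappa$, the last equality being the telescoping identity $\sum_{k=1}^{n}(a^{(n)}_{n-k}-a^{(n)}_{n-k+1})=a^{(n)}_{0}-a^{(n)}_{n}$. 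Hence $\|\vPhi^{n+1}\|_{\infty}\le1$, which closes the induction for the discrete MBP.

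For the discrete energy law I would rewrite \eqref{scheme1} as $L^{\alpha}_{t}\Phi^{n+1}+\kappa(\Phi^{n+1}-\Phi^{n})+f(\Phi^{n})-\varepsilon^{2}\Delta_{h}\Phi^{n+1}=0$ and pair it with $\Phi^{n+1}-\Phi^{n}$ in $\langle\cdot,\cdot\rangle_{\Omega}$, then estimate the three resulting contributions. (i) To the fractional term $\langle L^{\alpha}_{t}\Phi^{n+1},\Phi^{n+1}-\Phi^{n}\rangle_{\Omega}$ I apply Lemma~\ref{lemm1} nodewise with $\psi^{k}=\Phi^{k+1}-\Phi^{k}$ and sum over the grid; since $\sum_{k}a^{(n)}_{n-k}\psi^{k}=L^{\alpha}_{t}\Phi^{n+1}$ and $\sum_{j=0}^{k}a^{(k)}_{k-j}\psi^{j}$ is the L1 operator at the corresponding level, this contributes the forward difference of the DCC-weighted memory terms appearing in \eqref{dis_eg} together with a nonnegative remainder proportional to $a^{(n)}_{0}\|\Phi^{n+1}-\Phi^{n}\|_{h}^{2}$. (ii) To the diffusion term I apply Lemma~\ref{intpat} and the polarization identity, which yields $\frac{\varepsilon^{2}}{2}\big(\|\nabla_{h}\Phi^{n+1}\|_{h}^{2}-\|\nabla_{h}\Phi^{n}\|_{h}^{2}\big)$ plus the nonnegative term $\frac{\varepsilon^{2}}{2}\|\nabla_{h}(\Phi^{n+1}-\Phi^{n})\|_{h}^{2}$. (iii) For the nonlinear and stabilization terms I use the second-order Taylor expansion $F(\Phi^{n+1})-F(\Phi^{n})=f(\Phi^{n})(\Phi^{n+1}-\Phi^{n})+\frac{1}{2}f'(\xi)(\Phi^{n+1}-\Phi^{n})^{2}$ with $\xi$ between $\Phi^{n}$ and $\Phi^{n+1}$; the discrete MBP just established guarantees $|\xi|\le1$, so $f'(\xi)\le\kappa$ by \eqref{kappa_cd}, whence $\langle f(\Phi^{n}),\Phi^{n+1}-\Phi^{n}\rangle_{\Omega}+\kappa\|\Phi^{n+1}-\Phi^{n}\|_{h}^{2}\ge\langle F(\Phi^{n+1})-F(\Phi^{n}),1\rangle_{\Omega}+\frac{\kappa}{2}\|\Phi^{n+1}-\Phi^{n}\|_{h}^{2}$. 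Substituting (i)--(iii) into the tested identity and discarding the nonnegative remainders collapses it to $\mathcal{E}_{\alpha,h}[\Phi^{n+1}]\le\mathcal{E}_{\alpha,h}[\Phi^{n}]$, i.e.\ \eqref{eqq4}.

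The step I expect to be the main obstacle is (i): identifying, term by term and with the correct normalizing constant, the output of Lemma~\ref{lemm1} --- namely the sums $\sum_{k=0}^{n}p^{(n)}_{n-k}\big(\sum_{j=0}^{k}a^{(k)}_{k-j}\psi^{j}\big)^{2}$ and its level-$(n-1)$ analogue --- with the memory term $\sum_{k=0}^{n-1}p^{(n-1)}_{n-1-k}\|L^{\alpha}_{t}\Phi^{k}\|_{h}^{2}$ in the definition \eqref{dis_eg} of $\mathcal{E}_{\alpha,h}$; this requires carefully tracking the index shift between $L^{\alpha}_{t}\Phi^{k}$ and the inner convolution $\sum_{j=0}^{k}a^{(k)}_{k-j}\psi^{j}$. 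Everything else is routine once the MBP is in hand; it is worth stressing that the MBP (hence the hypothesis \eqref{kappa_cd}) enters the energy argument only through step (iii), to keep the Taylor node in $[-1,1]$ so that the stabilization dominates $\frac{1}{2}f'(\xi)$, and that no restriction on the (possibly nonuniform) step sizes $\tau_{n}$ is required anywhere.
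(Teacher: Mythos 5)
Your proposal is correct and follows essentially the same route as the paper: induction with Lemma~\ref{lemm2}, Lemma~\ref{para} and the monotonicity/telescoping of the L1 kernels for the MBP, and testing the scheme against $\Phi^{n+1}-\Phi^{n}$ with Lemma~\ref{intpat}, the mean-value (Taylor) bound $f'(\xi)\le\kappa$ and Lemma~\ref{lemm1} (with $\psi^{k}=\Phi^{k+1}-\Phi^{k}$) for the energy law. The only organizational difference is that the paper first isolates the reusable inequality \eqref{eqq3} for general $U,V$ before specializing, and your remark about the index bookkeeping between $\sum_{j=0}^{k}a^{(k)}_{k-j}\psi^{j}$ and the memory term in \eqref{dis_eg} correctly identifies the one place where care is needed.
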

\begin{proof}
We prove the result by mathematical induction. Assume that the numerical solutions ${u^{k}{h}}$ satisfy the MBP for all $1\leq k\leq n$. We will then show that $|\vPhi^{,n+1}|{\infty}\leq 1$.
From \eqref{BDF1_tsor} and \eqref{eq1}, we have
 \beq
 \big((a^{(n)}_{0}+\kappa)I-\varepsilon^{2}D_{h}\big)\vPhi^{n+1}=\sum_{k=1}^{n}(a^{(n)}_{n-k}-a^{(n)}_{n-k+1})\vPhi^{k}+a^{(n)}_{n}\vPhi^{0}-f_{\kappa}(\vPhi^{n}).
 \eeq
Applying Lemma \ref{lemm2}, Lemma \ref{para}, and the triangle inequality yields
 \bry
 (a^{(n)}_{0}+\kappa)\big\|\vPhi^{n+1}\big\|_{\infty}&\leq\Big\|\big((a^{(n)}_{0}+\kappa)I-\varepsilon^{2}D_{h}\big)\vPhi^{n+1}\Big\|_{\infty}\\
 &\dps\leq \sum_{k=1}^{n}(a^{(n)}_{n-k}-a^{(n)}_{n-k+1})\big\|\vPhi^{k}\big\|_{\infty}+a^{(n)}_{n}\big\|\vPhi^{0}\big\|_{\infty}+\big\|f_{\kappa}(\vPhi^{n})\big\|_{\infty}\\
 &\leq a^{(n)}_{0}+\kappa,
 \ery
which implies the desired estimate $|\vPhi^{n+1}|_{\infty}\leq 1$.

Next, for any $U,V\in\mathcal{C}_{h}$, the differential mean value theorem together with \eqref{kappa_cd} gives
 \begin{align*}
 \big\langle F(U)-F(V),1\big\rangle _{\Omega}=&\big\langle f(V),U-V\big\rangle _{\Omega}+\big\langle \frac{1}{2}f'(\xi).*(U-V),U-V\big\rangle _{\Omega}\nonumber\\
 \leq&\big\langle f(V),U-V\big\rangle _{\Omega}+\frac{\kappa}{2}\big\|U-V\|^{2}_{h}\\
 =&\big\langle \kappa V+f_{\kappa}(V),U-V\big\rangle _{\Omega}+\frac{\kappa}{2}\big\|U-V\|^{2}_{h},\nonumber
 \end{align*}
where each entry of $\xi$ lies between the corresponding entries of $U$ and $V$.
 Moreover, using the identity $2a(a-b)=a^{2}-b^{2}+(a-b)^{2}$ and Lemma \ref{intpat}, we obtain
 \begin{align*}
 &\frac{\varepsilon^{2}}{2}\Big[[\nabla_{h}U,\nabla_{h}U]_{\Omega}-[\nabla_{h}V,\nabla_{h}V]_{\Omega}\Big]\nonumber\\
 &~~=\varepsilon^{2}[\nabla_{h}U,\nabla_{h}(U-V)]_{\Omega}-\frac{\varepsilon^{2}}{2}[\nabla_{h}(U-V),\nabla_{h}(U-V)]_{\Omega}\\
  &~~=\big\langle-\varepsilon^{2}\Delta_{h}U,U-V\big\rangle_{\Omega}-\frac{\varepsilon^{2}}{2}\|\nabla_{h}(U-V)\|^{2}_{h}.\nonumber
 \end{align*}
 Combining this with the definition of $E_{h}[\cdot]$ in \eqref{eqq2}, we obtain for any $U,V\in\mathcal{C}_{h}$,
\begin{align}\label{eqq3}
&E_{h}[U]-E_{h}[V]\nonumber\\
&~~\leq\big\langle(\kappa-\varepsilon^{2}\Delta_{h})U+f_{\kappa}(V),U-V\big\rangle_{\Omega}-\frac{\varepsilon^{2}}{2}\|\nabla_{h}(U-V)\|^{2}_{h}-\frac{\kappa}{2}\big\|U-V\|^{2}_{h},
\end{align}
Finally, invoking \eqref{eqq3}, \eqref{scheme1}, and Lemma \ref{lemm1}, we deduce
\begin{align*}
&E_{h}[\Phi^{n+1}]-E_{h}[\Phi^{n}]\nonumber\\
&\leq\big\langle(\kappa-\varepsilon^{2}\Delta_{h})\Phi^{n+1}+f_{\kappa}(\Phi^{n}),\Phi^{n+1}-\Phi^{n}\big\rangle_{\Omega}-\frac{\varepsilon^{2}}{2}\|\nabla_{h}(\Phi^{n+1}-\Phi^{n})\|^{2}_{h}\nonumber\\
&~~~-\frac{\kappa}{2}\big\|\Phi^{n+1}-\Phi^{n}\|^{2}_{h}\nonumber\\
&=-\big\langle L^{\alpha}_{t}\Phi^{n+1},\Phi^{n+1}-\Phi^{n}\big\rangle_{\Omega}-\frac{\varepsilon^{2}}{2}\|\nabla_{h}(\Phi^{n+1}-\Phi^{n})\|^{2}_{h}-\frac{\kappa}{2}\big\|\Phi^{n+1}-\Phi^{n}\|^{2}_{h}\\
&\leq-\sum_{k=0}^{n}p^{(n)}_{n-k}\big(L^{\alpha}_{t}\Phi^{k}\big)^{2}+\sum_{k=0}^{n-1}p^{(n-1)}_{n-1-k}\big(L^{\alpha}_{t}\Phi^{k}\big)^{2} -\frac{\varepsilon^{2}}{2}\|\nabla_{h}(\Phi^{n+1}-\Phi^{n})\|^{2}_{h}\nonumber\\
&~~~-(\frac{\kappa}{2}+a^{(n)}_{0})\big\|\Phi^{n+1}-\Phi^{n}\|^{2}_{h},\nonumber
\end{align*}
which establishes the desired inequality \eqref{eqq4}.
Then, we complete the proof.
\end{proof}

{\bf A linear high order predictor-corrector stabilized L1 scheme.}
In this subsection, we introduce a high-order fully discrete linear scheme for the TFAC equation \eqref{TFAC1}. Specifically, given $\Phi^0=\Pi_{\mathcal{C}_h}\phi_0$, and for $n=0,1\cdots,N-1$, we seek $\Phi^{n+1}\in\mathcal{C}_{h}$ such that
\begin{subequations}\label{L1_h}
    \begin{align}
        &\Phi^{*,n+1} = {\rm sFL1}(\kappa,\Phi^n), \label{L1_h1}\\
        &L^{\alpha}_{t}\Phi^{n+1}+\kappa\Phi^{n+1}-\varepsilon^{2}\Delta_{h}\Phi^{n+1}+f_{\kappa}(\Phi^{*,n+1})=0. \label{L1_h2}
    \end{align}
\end{subequations}
 The corresponding tensor form is given by
\begin{subequations}\label{L1_th}
    \begin{align}
        &\vPhi^{*,n+1} = {\rm sFL1}(\kappa,\vPhi^n), \label{L1_th1}\\
        &L^{\alpha}_{t}\vPhi^{n+1}+\kappa\vPhi^{n+1}-\varepsilon^{2}D_{h}\vPhi^{n+1}+f_{\kappa}(\vPhi^{*,n+1})=0. \label{L1_th2}
    \end{align}
\end{subequations}

The preservation of the MBP and the discrete variational energy dissipation law for the scheme \eqref{L1_h} is established in the following theorem.
\begin{theorem}\label{th1}
Under the assumption of Theorem \ref{thmMBP}, the proposed scheme \eqref{L1_h} is unconditionally MBP-preserving in the sense of $\|\vPhi^{n+1}\|_{\infty}\leq1$ for $n=0,1,\cdots,N-1.$ In addition, it holds that
\bq\label{eqq5}
   \mathcal{E}_{\alpha,h}[\Phi^{n+1}]\leq \mathcal{E}_{\alpha,h}[\Phi^{n}],\ \ n=0,1,\cdots,N-1
\eq
for any time stepping sizes, where $\mathcal{E}_{\alpha,h}[\Phi^{n}]$ is defined in \eqref{dis_eg}.
\end{theorem}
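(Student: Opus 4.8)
The plan is to follow the same two-pronged strategy used in the proof of Theorem~\ref{thmMBP}, namely proving the discrete MBP by induction via the M-matrix estimate in Lemma~\ref{lemm2}, and proving the energy law via Lemma~\ref{lemm1}, but now we must carefully control the extra error introduced by replacing $f_\kappa(\Phi^{n+1})$ with $f_\kappa(\Phi^{*,n+1})$ in the corrector step.

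First, for the MBP: I would argue by induction on $n$. Assume $\|\vPhi^k\|_\infty\le 1$ for all $0\le k\le n$. By Theorem~\ref{thmMBP} applied to a single step of $sFL1$, the predictor satisfies $\|\vPhi^{*,n+1}\|_\infty\le 1$ as well (the conditional restriction in Theorem~\ref{thmMBP} is not an issue here because the MBP of one $sFL1$ step only needs the input to satisfy the bound, which holds by the induction hypothesis). Then rewriting \eqref{L1_th2} using \eqref{eq1} gives
\[
\big((a^{(n)}_{0}+\kappa)I-\varepsilon^{2}D_{h}\big)\vPhi^{n+1}=\sum_{k=1}^{n}(a^{(n)}_{n-k}-a^{(n)}_{n-k+1})\vPhi^{k}+a^{(n)}_{n}\vPhi^{0}-f_{\kappa}(\vPhi^{*,n+1}),
\]
and applying Lemma~\ref{lemm2} (with $a=a^{(n)}_{0}+\kappa$), the positivity and monotonicity of the kernels, Lemma~\ref{para} applied to $\vPhi^{*,n+1}$, and the triangle inequality yields
\[
(a^{(n)}_{0}+\kappa)\|\vPhi^{n+1}\|_\infty\le\sum_{k=1}^{n}(a^{(n)}_{n-k}-a^{(n)}_{n-k+1})+a^{(n)}_{n}+\kappa = a^{(n)}_{0}+\kappa,
\]
so $\|\vPhi^{n+1}\|_\infty\le 1$. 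This closes the induction and the bound is unconditional because no step-size constraint enters.

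Next, for the energy law: the key identity \eqref{eqq3} from the proof of Theorem~\ref{thmMBP} holds for arbitrary $U,V\in\mathcal{C}_h$; I would apply it with $U=\Phi^{n+1}$, $V=\Phi^n$, but now the scheme \eqref{L1_h2} gives $(\kappa-\varepsilon^2\Delta_h)\Phi^{n+1} = -L^\alpha_t\Phi^{n+1}-f_\kappa(\Phi^{*,n+1})$, so
\[
E_h[\Phi^{n+1}]-E_h[\Phi^n]\le -\big\langle L^\alpha_t\Phi^{n+1},\Phi^{n+1}-\Phi^n\big\rangle_\Omega+\big\langle f_\kappa(\Phi^n)-f_\kappa(\Phi^{*,n+1}),\Phi^{n+1}-\Phi^n\big\rangle_\Omega-\tfrac{\varepsilon^2}{2}\|\nabla_h(\Phi^{n+1}-\Phi^n)\|_h^2-\tfrac{\kappa}{2}\|\Phi^{n+1}-\Phi^n\|_h^2.
\]
The first term is handled exactly as before by Lemma~\ref{lemm1}, producing $-\sum_{k=0}^n p^{(n)}_{n-k}(L^\alpha_t\Phi^k)^2+\sum_{k=0}^{n-1}p^{(n-1)}_{n-1-k}(L^\alpha_t\Phi^k)^2-a^{(n)}_0\|\Phi^{n+1}-\Phi^n\|_h^2$. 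The extra perturbation term is bounded by Cauchy--Schwarz and Young's inequality: $\langle f_\kappa(\Phi^n)-f_\kappa(\Phi^{*,n+1}),\Phi^{n+1}-\Phi^n\rangle_\Omega\le \tfrac{1}{2\kappa}\|f_\kappa(\Phi^n)-f_\kappa(\Phi^{*,n+1})\|_h^2+\tfrac{\kappa}{2}\|\Phi^{n+1}-\Phi^n\|_h^2$, and the second piece is absorbed by the $-\tfrac{\kappa}{2}\|\Phi^{n+1}-\Phi^n\|_h^2$ term.

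The main obstacle — and the crux of the argument — is bounding $\|f_\kappa(\Phi^n)-f_\kappa(\Phi^{*,n+1})\|_h^2$ by something absorbable, since $a$ priori it is only $O(1)$ times $\|\Phi^{*,n+1}-\Phi^n\|_h^2$, and the latter is not obviously controlled by the dissipation terms. The resolution is to observe that the predictor step \eqref{L1_th1} is itself $sFL1(\kappa,\Phi^n)$, so $L^\alpha_t\Phi^{*,n+1}+\kappa\Phi^{*,n+1}-\varepsilon^2\Delta_h\Phi^{*,n+1}+f_\kappa(\Phi^n)=0$; subtracting suitable quantities one can express $\Phi^{*,n+1}-\Phi^n$ (or $\Phi^{*,n+1}-\Phi^{n+1}$) in terms of $L^\alpha_t$ differences and use that $f_\kappa$ is Lipschitz on $[-1,1]$ with constant controlled by $\kappa$ (a consequence of \eqref{kappa_cd}, since both $\Phi^n$ and $\Phi^{*,n+1}$ lie in $[-1,1]$ by the MBP just established). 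I would likely need to track the term $\|\Phi^{*,n+1}-\Phi^{n+1}\|_h^2$ as well and show, using the coercivity $a^{(n)}_0>0$ of the L1 operator together with the stabilization, that all perturbation contributions are dominated by $a^{(n)}_0\|\Phi^{n+1}-\Phi^n\|_h^2+\tfrac{\kappa}{2}\|\Phi^{n+1}-\Phi^n\|_h^2$ plus the telescoping $p$-kernel terms, leaving $\mathcal{E}_{\alpha,h}[\Phi^{n+1}]\le\mathcal{E}_{\alpha,h}[\Phi^n]$; the fact that $f_\kappa$ has small Lipschitz constant relative to $\kappa$ near the predictor-corrector gap is what makes the stabilization parameter do exactly the right job.
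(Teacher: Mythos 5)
Your MBP argument is fine and is essentially what the paper intends (the paper omits the details, noting they mirror Theorem~\ref{thmMBP}); in particular you are right that the predictor inherits the bound from the induction hypothesis and that no step-size condition enters. The energy-law argument, however, has a genuine gap at exactly the point you flag as the crux. Applying \eqref{eqq3} once with $(U,V)=(\Phi^{n+1},\Phi^{n})$ leaves the perturbation $\langle f_{\kappa}(\Phi^{n})-f_{\kappa}(\Phi^{*,n+1}),\Phi^{n+1}-\Phi^{n}\rangle_{\Omega}$, and your proposed absorption cannot work for two reasons. First, the Lipschitz constant of $f_{\kappa}(\rho)=\rho^{3}-(1+\kappa)\rho$ on $[-1,1]$ is $\max_{\rho\in[-1,1]}|3\rho^{2}-1-\kappa|=1+\kappa$, which is \emph{larger} than $\kappa$, so the premise that ``$f_{\kappa}$ has small Lipschitz constant relative to $\kappa$'' is false and Young's inequality gives $\tfrac{1}{2\kappa}\|f_{\kappa}(\Phi^{n})-f_{\kappa}(\Phi^{*,n+1})\|_{h}^{2}\le \tfrac{(1+\kappa)^{2}}{2\kappa}\|\Phi^{*,n+1}-\Phi^{n}\|_{h}^{2}$ with an unabsorbable constant. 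Second, and more fundamentally, a single application of \eqref{eqq3} produces no negative multiple of $\|\Phi^{*,n+1}-\Phi^{n}\|_{h}^{2}$ on the right-hand side (only of $\|\Phi^{n+1}-\Phi^{n}\|_{h}^{2}$), so there is nothing to absorb this quantity into regardless of constants; the remaining remarks (``subtracting suitable quantities'', ``track $\|\Phi^{*,n+1}-\Phi^{n+1}\|_{h}^{2}$'') do not constitute an argument.

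The paper sidesteps the perturbation entirely by applying \eqref{eqq3} \emph{twice}: once across the predictor step with $(U,V)=(\Phi^{*,n+1},\Phi^{n})$ and once across the corrector step with $(U,V)=(\Phi^{n+1},\Phi^{*,n+1})$. In each application the explicitly treated nonlinearity of the corresponding scheme equation is evaluated exactly at the base point $V$, so no Lipschitz estimate on $f_{\kappa}$ is ever invoked, and a genuine dissipation term $-\tfrac{\kappa}{2}\|\Phi^{*,n+1}-\Phi^{n}\|_{h}^{2}$ does appear from the predictor inequality. Summing the two inequalities telescopes $E_{h}[\Phi^{n+1}]-E_{h}[\Phi^{n}]$, and the linear parts combine as $I_{1}+I_{2}=-\langle L^{\alpha}_{t}\Phi^{n+1},\Phi^{n+1}-\Phi^{n}\rangle_{\Omega}+a^{(n)}_{0}\langle \Phi^{n+1}-\Phi^{*,n+1},\Phi^{*,n+1}-\Phi^{n}\rangle_{\Omega}$; the first term is handled by Lemma~\ref{lemm1} as you describe, and the cross term by the identity $2ab=(a+b)^{2}-a^{2}-b^{2}$, whose positive piece $\tfrac{a^{(n)}_{0}}{2}\|\Phi^{n+1}-\Phi^{n}\|_{h}^{2}$ is cancelled by the coercivity term $a^{(n)}_{0}\|\Phi^{n+1}-\Phi^{n}\|_{h}^{2}$ supplied by Lemma~\ref{lemm1}. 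To salvage your route you would in effect have to add and subtract $E_{h}[\Phi^{*,n+1}]$ --- which is precisely the paper's proof.
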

\begin{proof}Since the proof of the MBP preservation for scheme \eqref{L1_h} closely follows that of Theorem \ref{thmMBP}, we omit the details and leave them to the interested reader.
We now proceed to establish the discrete variational energy dissipation law \eqref{eqq5}.
From \eqref{eqq3} and \eqref{L1_h1}, it follows that
\begin{align}\label{eqq6}
&E_{h}[\Phi^{*,n+1}]-E_{h}[\Phi^{n}]\nonumber\\
\leq&\big\langle(\kappa-\varepsilon^{2}\Delta_{h})\Phi^{*,n+1}+f_{\kappa}(\Phi^{n}),\Phi^{*,n+1}-\Phi^{n}\big\rangle_{\Omega}-\frac{\varepsilon^{2}}{2}\|\nabla_{h}(\Phi^{*,n+1}-\Phi^{n})\|^{2}_{h}\nonumber\\
&-\frac{\kappa}{2}\big\|\Phi^{*,n+1}-\Phi^{n}\|^{2}_{h}\nonumber\\
=&-\big\langle a^{(n)}_{0}(\Phi^{*,n+1}-\Phi^{n})+\sum_{k=0}^{n-1}a^{(n)}_{n-k}(\Phi^{k+1}-\Phi^{k}),\Phi^{*,n+1}-\Phi^{n}\big\rangle_{\Omega}\\
&-\frac{\varepsilon^{2}}{2}\|\nabla_{h}(\Phi^{*,n+1}-\Phi^{n})\|^{2}_{h}-\frac{\kappa}{2}\big\|\Phi^{*,n+1}-\Phi^{n}\|^{2}_{h}\nonumber\\
:=&I_{1}-\frac{\varepsilon^{2}}{2}\|\nabla_{h}(\Phi^{*,n+1}-\Phi^{n})\|^{2}_{h}-\frac{\kappa}{2}\big\|\Phi^{*,n+1}-\Phi^{n}\|^{2}_{h}.\nonumber
\end{align}
Similarly, using \eqref{eqq3} and \eqref{L1_h2}, we obtain
\begin{align}\label{eqq7}
&E_{h}[\Phi^{n+1}]-E_{h}[\Phi^{*,n+1}]\nonumber\\
\leq&-\big\langle a^{(n)}_{0}(\Phi^{n+1}-\Phi^{n})+\sum_{k=0}^{n-1}a^{(n)}_{n-k}(\Phi^{k+1}-\Phi^{k}),\Phi^{n+1}-\Phi^{*,n+1}\big\rangle_{\Omega}\nonumber\\
&-\frac{\varepsilon^{2}}{2}\|\nabla_{h}(\Phi^{n+1}-\Phi^{*,n+1})\|^{2}_{h}-\frac{\kappa}{2}\big\|\Phi^{n+1}-\Phi^{*,n+1}\|^{2}_{h}\\
:=&I_{2}-\frac{\varepsilon^{2}}{2}\|\nabla_{h}(\Phi^{n+1}-\Phi^{*,n+1})\|^{2}_{h}-\frac{\kappa}{2}\big\|\Phi^{n+1}-\Phi^{*,n+1}\|^{2}_{h}.\nonumber
\end{align}
Moreover, we have
\begin{align}\label{eqq8}
I_{1}+I_{2}=&-\big\langle a^{(n)}_{0}(\Phi^{n+1}-\Phi^{n})+\sum_{k=0}^{n-1}a^{(n)}_{n-k}(\Phi^{k+1}-\Phi^{k}),\Phi^{n+1}-\Phi^{n}\big\rangle_{\Omega}\nonumber\\
&+\big\langle a^{(n)}_{0}(\Phi^{n+1}-\Phi^{*,n+1}),\Phi^{*,n+1}-\Phi^{n}\big\rangle_{\Omega}\nonumber\\
\leq&-\sum_{k=0}^{n}p^{(n)}_{n-k}\big(L^{\alpha}_{t}\Phi^{k}\big)^{2}+\sum_{k=0}^{n-1}p^{(n-1)}_{n-1-k}\big(L^{\alpha}_{t}\Phi^{k}\big)^{2}-\frac{a^{(n)}_{0}}{2}\|\Phi^{n+1}-\Phi^{n}\|^{2}_{h}\\
&-\frac{a^{n}_{0}}{2}\|\Phi^{n+1}-\Phi^{*,n+1}\|^{2}_{h} -\frac{a^{n}_{0}}{2}\|\Phi^{*,n+1}-\Phi^{n}\|^{2}_{h}+ \frac{a^{n}_{0}}{2}\|\Phi^{n+1}-\Phi^{n}\|^{2}_{h}\nonumber\\
\leq&-\sum_{k=0}^{n}p^{(n)}_{n-k}\big(L^{\alpha}_{t}\Phi^{k}\big)^{2}+\sum_{k=0}^{n-1}p^{(n-1)}_{n-1-k}\big(L^{\alpha}_{t}\Phi^{k}\big)^{2}-\frac{a^{n}_{0}}{2}\big[\|\Phi^{n+1}-\Phi^{*,n+1}\|^{2}_{h} \nonumber\\ &+\|\Phi^{*,n+1}-\Phi^{n}\|^{2}_{h}\big],\nonumber
\end{align}
where we have used Lemma \ref{lemm1} and the identity $2ab=-a^{2}-b^{2}+(a+b)^{2}$ for any $a,b\in \mathbb{R}$. Then, summing up \eqref{eqq6} and \eqref{eqq7}, together with \eqref{eqq8} and the definition of $\mathcal{E}_{\alpha,h}[\Phi^{n}]$ in \eqref{dis_eg}, we can obtain the desired discrete energy dissipation law \eqref{eqq5}. This ends the proof.
\end{proof}
\subsection{Fast linear predictor-corrector stabilized L1 scheme}

Due to the intrinsic non-locality of the standard L1 formula \eqref{eq1}, its direct application leads to significant computational and storage costs, rendering it impractical for long-time simulations.
To address this issue, we adopt the so-called sum-of-exponentials (SOE) technique \cite{Bey10} to approximate the history part of the time fractional derivative to speed up the calculation and reduce computational storage.
It has been shown in \cite{JZZ17,L10,LYZ19} that for any $0<\alpha<1$ and prescribed absolute tolerance $\epsilon\ll1$, there exist a positive integer $N_{\epsilon}$, positive quadrature nodes $\rho^{\alpha}_{i}$and weights $\omega^{\rho}_{i}$ $(1\leq i\leq N_{\epsilon})$ such that the kernel function $\frac{1}{\Gamma(1-\alpha)t^{\alpha}}$ admits an efficient SOE approximation of the form
  \beq
  \dps\Big|\frac{1}{\Gamma(1-\alpha)t^{\alpha}}-\sum_{i=1}^{N_{\epsilon}}\omega^{\alpha}_{i}e^{-\rho^{\alpha}_{i}t}\Big|\leq \epsilon, \qquad\forall t\in(\delta,T]
  \eeq
  with $0<\delta\ll 1.$
To develop the fast L1 formula using the SOE technique, we decompose the Caputo fractional derivative into two components: the local part and the history part.
The local part is discretized via linear interpolation, while the history part is approximated using the SOE technique. Specifically, we have
  \begin{align*}
 _{0}\!D^{\alpha}_{t_{n+1}}\phi
  =&\frac{1}{\Gamma(1-\alpha)}\int_{t_n}^{t_{n+1}}(t_{n+1}-s)^{-\alpha}\phi_{s}(s)+\frac{1}{\Gamma(1-\alpha)}\int_{0}^{t_{n}}(t_{n+1}-s)^{-\alpha}\phi_{s}(s)ds\\
\approx & a^{(n)}_{0}\big(\phi(t_{n+1})-\phi(t_{n})\big)+\int_{0}^{t_{n}} \sum_{i=1}^{N_{\epsilon}}\omega^{\alpha}_{i}e^{-\rho^{\alpha}_{i}(t_{n+1}-s)} \phi_{s}(s)ds\\
 =& a^{(n)}_{0}\big(\phi(t_{n+1})-\phi(t_{n})\big)+\sum_{i=1}^{N_{\epsilon}}\omega^{\alpha}_{i}e^{-\rho^{\alpha}_{i}\tau_{n+1}}\mathcal{H}_{i}(t_{n}),\ \ n=0,1,\cdots,
  \end{align*}
 where
\beq
\mathcal{H}_{i}(t_{n})=\int_{0}^{t_{n}}e^{-\rho^{\alpha}_{i}(t_{n}-s)}\phi_{s}(s)ds \mbox{ with } \mathcal{H}_{i}(t_{0})=0, \mbox{ for } l=0,1,\cdots, N_{\epsilon}.
\eeq
Using a recursive formula and linear interpolation in $(t_{n-1}, t_{n})$, $\mathcal{H}_{i}(t_{n})$ can be approximated by
\begin{align*}
\mathcal{H}_{i}(t_{n})&\dps=e^{-\rho^{\alpha}_{i}\Dt_{n}}\mathcal{H}_{i}(t_{n-1})+\int_{t_{n-1}}^{t_{n}}e^{-\rho^{\alpha}_{i}(t_{n}-s)}\phi_{s}(s)ds\\
&\dps\approx e^{-\rho^{\alpha}_{i}\Dt_{n}}\mathcal{H}_{i}(t_{n-1})+b_{i}^{n}(\phi(t_{n})-\phi(t_{n-1}))
\end{align*}
with
\bq\label{eqn1}
 b^{n}_{i}=\frac{1}{\Dt_{n}}\int_{t_{n-1}}^{t_{n}}e^{-\rho^{\alpha}_{i}(t_{n}-s)}ds,\mbox{ for } n=1,2,\cdots.
\eq
Then, we derive the discrete fast L1 formula:
\begin{align}\label{eqn2}
\mathcal{L}^{\alpha}_{t}\phi^{n+1}:=a^{(n)}_{0}\big(\phi^{n+1}-\phi^{n})\big)+\sum_{i=1}^{N_{\epsilon}}\omega^{\alpha}_{i}e^{-\rho^{\alpha}_{i}\tau_{n+1}}\mathcal{H}^{n}_{i},\ \ n=0,1, \cdots,
\end{align}
where $\mathcal{H}^{n}_{i}=\mathcal{H}^{n-1}_{i}+b_{i}^{n}(\phi^{n}-\phi^{n-1})$ with $\mathcal{H}^{n}_{i}=0$
for $i=1,2,\cdots,N_{\epsilon}$. Consequently, it follows from \eqref{eqn1} and \eqref{eqn2} that
\begin{align}
\mathcal{L}^{\alpha}_{t}\phi^{n+1}&=a^{(n)}_{0}\big(\phi^{n+1}-\phi^{n}\big)+\sum_{i=1}^{N_{\epsilon}}\omega^{\alpha}_{i}\sum_{k=1}^{n}e^{-\rho^{\alpha}_{i}(t_{n+1}-t_{k})}b^{k}_{i}(\phi^{k}-\phi^{k-1})\nonumber\\
&=a^{(n)}_{0}\big(\phi^{n+1}-\phi^{n}\big)+\sum_{k=1}^{n}\frac{1}{\Dt_{k}}\sum_{i=1}^{N_{\epsilon}}\omega^{\alpha}_{i}\int_{t_{k-1}}^{t_{k}}e^{-\rho^{\alpha}_{i}(t_{n+1}-s)}ds (\phi^{k}-\phi^{k-1})\\
&:=\sum_{k=0}^{n}A^{(n)}_{n-k}(\phi^{k+1}-\phi^{k}),\nonumber
\end{align}
where the discrete convolution kernel $A^{(n)}_{n-k}$ is defined by
\bq\label{kernel}
A^{(n)}_{0}=a^{(n)}_{0}, A^{(n)}_{n-k}=\frac{1}{\Dt_{k+1}}\sum_{i=1}^{N_{\epsilon}}\omega^{\alpha}_{i}\int_{t_{k}}^{t_{k+1}}e^{-\rho^{\alpha}_{i}(t_{n+1}-s)}ds \mbox{ for } k=0,1,\cdots,n-1.
\eq
Then, fast linear predictor-corrector stabilized L1 scheme for the TFAC problem \eqref{TFAC1} reads: given $\Phi^0=\Pi_{\mathcal{C}_h}\phi_0$, and for $n=0,1\cdots,N-1$, find $\Phi^{n+1}\in\mathcal{C}_{h}$ such that
\begin{subequations}\label{fL1_h}
    \begin{align}
        & \mathcal{L}^\alpha_t\Phi^{*,n+1}+\kappa\Phi^{*,n+1}-\varepsilon^{2}\Delta_{h}\Phi^{*,n+1}+f_{\kappa}(\Phi^{n})=0, \label{fL1_h1}\\
        &\mathcal{L}^{\alpha}_{t}\Phi^{n+1}+\kappa\Phi^{n+1}-\varepsilon^{2}\Delta_{h}\Phi^{n+1}+f_{\kappa}(\Phi^{*,n+1})=0, \label{fL1_h2}
    \end{align}
\end{subequations}
 and its tensor form is presented as follows
\begin{subequations}\label{fL1_th}
    \begin{align}
        & \mathcal{L}^\alpha_t\vPhi^{*,n+1}+\kappa\vPhi^{*,n+1}-\varepsilon^{2}D_{h}\vPhi^{*,n+1}+f_{\kappa}(\vPhi^{n})=0, \label{fL1_th1}\\
        &\mathcal{L}^{\alpha}_{t}\vPhi^{n+1}+\kappa\vPhi^{n+1}-\varepsilon^{2}D_{h}\vPhi^{n+1}+f_{\kappa}(\vPhi^{*,n+1})=0, \label{fL1_th2}
    \end{align}
\end{subequations}
where $D_{h}$ is defined in \eqref{T_D}.

It is shown in \cite[Lemma 2.5]{LYZ19} that the discrete convolution kernels $\{A^{n}_{k}\}_{k=0}^{n}$ satisfy following property, which plays a crucial role in deriving both the MBP preservation and the discrete variational energy dissipation for the fast linear predictor-corrector stabilized L1 scheme \eqref{fL1_h}.
\begin{lemma}\label{lem3}
If the tolerance error $\epsilon$ of SOE satisfies $\epsilon\leq\min\{ \frac{T^{-\alpha}}{3\Gamma(1-\alpha)}, \frac{\alpha}{\Gamma(2-\alpha)}\}$, then
the discrete convolutional kernels $\{A^{n}_{k}\}_{k=0}^{n}$  defined in \eqref{kernel}, satisfy

$(i)$ $A^{(n)}_{k-1}>A^{(n)}_{k}>0$ for $0\leq k\leq n$;

$(ii)$  $A^{(n)}_{0}=a^{(n)}_{0}$ and $A^{(n)}_{n-k}\geq \frac{2}{3}a^{(n)}_{n-k}$ for $0\leq k\leq n-1.$
\end{lemma}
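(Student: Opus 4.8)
The stated properties are precisely \cite[Lemma 2.5]{LYZ19}, and the plan is to reduce everything to two elementary facts about the exponential sum $g(t):=\sum_{i=1}^{N_{\epsilon}}\omega^{\alpha}_{i}e^{-\rho^{\alpha}_{i}t}$: it is positive, strictly decreasing and convex on $(0,\infty)$ (indeed $g'(t)=-\sum_{i}\omega^{\alpha}_{i}\rho^{\alpha}_{i}e^{-\rho^{\alpha}_{i}t}<0$), and it obeys the SOE estimate $|g(t)-K(t)|\le\epsilon$ on $(\delta,T]$, where $K(t):=\frac{1}{\Gamma(1-\alpha)t^{\alpha}}$. First I would record, after interchanging the finite sum with the integral in \eqref{kernel} and substituting $t=t_{n+1}-s$, the representations
\begin{equation*}
A^{(n)}_{n-k}=\frac{1}{\Dt_{k+1}}\int_{t_{n+1}-t_{k+1}}^{t_{n+1}-t_{k}}g(t)\,\d t,\qquad a^{(n)}_{n-k}=\frac{1}{\Dt_{k+1}}\int_{t_{n+1}-t_{k+1}}^{t_{n+1}-t_{k}}K(t)\,\d t,\qquad 0\le k\le n-1,
\end{equation*}
so that $A^{(n)}_{n-k}$ and $a^{(n)}_{n-k}$ are the means of $g$ and of $K$ over the interval $I_{k}:=[\,t_{n+1}-t_{k+1},\,t_{n+1}-t_{k}\,]$, whereas $A^{(n)}_{0}=a^{(n)}_{0}=\Dt_{n+1}^{-\alpha}/\Gamma(2-\alpha)$ by direct integration. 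Each $I_{k}$ with $0\le k\le n-1$ lies in $(\delta,T]$, since its right endpoint is at most $t_{n+1}\le T$ and its left endpoint is at least $\Dt_{n+1}$, provided one fixes at the outset $\delta<\min_{1\le n\le N}\Dt_{n}$.

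For (ii) the equality $A^{(n)}_{0}=a^{(n)}_{0}$ is the definition; for $0\le k\le n-1$, integrating the pointwise bound $g\ge K-\epsilon$ over $I_{k}$ gives $A^{(n)}_{n-k}\ge a^{(n)}_{n-k}-\epsilon$, and since $K(t)\ge T^{-\alpha}/\Gamma(1-\alpha)$ on $I_{k}$ we obtain $a^{(n)}_{n-k}\ge T^{-\alpha}/\Gamma(1-\alpha)\ge 3\epsilon$ by the tolerance hypothesis, hence $\epsilon\le\tfrac13 a^{(n)}_{n-k}$ and therefore $A^{(n)}_{n-k}\ge\tfrac23 a^{(n)}_{n-k}>0$. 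This also delivers the positivity in (i). For the monotonicity in (i), I would compare two consecutive history kernels $A^{(n)}_{m}$ and $A^{(n)}_{m-1}$ with $2\le m\le n$: writing $m=n-k$, these are the means of $g$ over the adjacent intervals $I_{k}$ and $I_{k+1}$, which share the endpoint $t_{n+1}-t_{k+1}$ with $I_{k+1}$ lying to its left; strict monotonicity of $g$ then forces the mean over $I_{k+1}$ to strictly exceed $g(t_{n+1}-t_{k+1})$, which strictly exceeds the mean over $I_{k}$, i.e. $A^{(n)}_{m-1}>A^{(n)}_{m}$.

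The one genuinely delicate comparison — the step I expect to be the main obstacle — is $A^{(n)}_{0}>A^{(n)}_{1}$, because there a singular-kernel mean is matched against an exponential mean and the slack must be controlled quantitatively. I would combine the explicit value of $A^{(n)}_{0}$ with the SOE bound on $I_{n-1}=[\Dt_{n+1},\Dt_{n+1}+\Dt_{n}]$ and the concavity inequality $(a+b)^{1-\alpha}\le a^{1-\alpha}+(1-\alpha)a^{-\alpha}b$:
\begin{equation*}
A^{(n)}_{1}\le\epsilon+\frac{1}{\Dt_{n}}\int_{\Dt_{n+1}}^{\Dt_{n+1}+\Dt_{n}}K(t)\,\d t=\epsilon+\frac{(\Dt_{n+1}+\Dt_{n})^{1-\alpha}-\Dt_{n+1}^{1-\alpha}}{\Dt_{n}\,\Gamma(2-\alpha)}\le\epsilon+\frac{(1-\alpha)\Dt_{n+1}^{-\alpha}}{\Gamma(2-\alpha)},
\end{equation*}
whence
\begin{equation*}
A^{(n)}_{0}-A^{(n)}_{1}\ge\frac{\alpha\,\Dt_{n+1}^{-\alpha}}{\Gamma(2-\alpha)}-\epsilon\ge\frac{\alpha}{\Gamma(2-\alpha)}-\epsilon\ge 0
\end{equation*}
under the mild standing bound $\Dt_{n+1}\le 1$, the strict inequality following from the operative strict form $\epsilon<\alpha/\Gamma(2-\alpha)$ of the tolerance condition. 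Everything else being transparent, it is exactly this boundary estimate — the pairing of the exact formula for $A^{(n)}_{0}$ with the concavity bound and the sharp constant $\alpha/\Gamma(2-\alpha)$ — that is the crux, and which explains why the tolerance hypothesis carries that particular constant alongside $T^{-\alpha}/(3\Gamma(1-\alpha))$.
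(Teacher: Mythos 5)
The paper itself offers no proof of this lemma---it is quoted from \cite[Lemma 2.5]{LYZ19}---and your reconstruction follows essentially the same route as that reference: represent $A^{(n)}_{n-k}$ and $a^{(n)}_{n-k}$ as interval means of the exponential sum $g$ and of the kernel $K(t)=t^{-\alpha}/\Gamma(1-\alpha)$ over $I_k=[t_{n+1}-t_{k+1},t_{n+1}-t_k]$, use the pointwise SOE bound $|g-K|\le\epsilon$ on $(\delta,T]$ (with $\delta$ chosen below the smallest step, as you note) to get part (ii) and positivity via $\epsilon\le\tfrac13 a^{(n)}_{n-k}$, use strict monotonicity of $g$ for the history-to-history comparisons $A^{(n)}_{m-1}>A^{(n)}_m$, $2\le m\le n$, and treat $A^{(n)}_0$ versus $A^{(n)}_1$ separately. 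All of these steps are carried out correctly, and your index bookkeeping (the intervals move leftward, where $g$ is larger, as the kernel subscript decreases) is right.

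The one point needing care is exactly the one you single out, $A^{(n)}_0>A^{(n)}_1$. Your chain yields $A^{(n)}_0-A^{(n)}_1\ge\alpha\,\Dt_{n+1}^{-\alpha}/\Gamma(2-\alpha)-\epsilon$, which the hypothesis $\epsilon\le\alpha/\Gamma(2-\alpha)$ controls only when $\Dt_{n+1}^{-\alpha}\ge1$, i.e.\ $\Dt_{n+1}\le1$. That normalization is not among the lemma's stated hypotheses; you flag it as a ``mild standing bound,'' which is the honest thing to do, and indeed the dimensionally bare constant $\alpha/\Gamma(2-\alpha)$ in the tolerance condition only makes sense under such a normalization (using only $\Dt_{n+1}\le T$ one would instead need $\epsilon\le\alpha T^{-\alpha}/\Gamma(2-\alpha)$). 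Two cosmetic remarks: the strict inequality $A^{(n)}_0>A^{(n)}_1$ already follows from your argument without strengthening the hypothesis to $\epsilon<\alpha/\Gamma(2-\alpha)$, because the concavity bound $(a+b)^{1-\alpha}<a^{1-\alpha}+(1-\alpha)a^{-\alpha}b$ is strict for $b>0$; and the paper's range ``$0\le k\le n$'' in (i) must be read as $1\le k\le n$ (there is no $A^{(n)}_{-1}$), which is the range you actually cover.
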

Similarly to \eqref{eq_1} and \eqref{eq_2}, the DOC kernels $\{\hat{\theta}^{(n)}_{n-k}\}_{k=0}^{n-1}$ and the DCC kernels  $\{\hat{p}^{(n)}_{n-k}\geq0\}_{k=0}^{n-1}$ associated with the discrete convolutional kernels $\{A^{n}_{k}\}_{k=0}^{n}$  are given as follows:
\begin{align*}
\hat{\theta}^{(n)}_{0}&=\frac{1}{A^{(n)}_{0}} \mbox{ and } \hat{\theta}^{(n)}_{n-k}=-\frac{1}{A^{(k)}_{0}}\sum_{j=k+1}^{n}\hat{\theta}^{(n)}_{n-j}A^{(j)}_{j-k}\mbox{ for }0\leq k\leq n-1,\nonumber\\
\hat{p}^{(n)}_{n-l}&=\sum_{j=l}^{n}\hat{\theta}^{(j)}_{j-l},~~ 0\leq l\leq n.
\end{align*}
Then, it holds that
\beq
\dps\sum_{j=k}^{n}\hat{\theta}^{(n)}_{n-j}A^{(j)}_{j-k}=\delta_{n,k},~~\sum_{j=k}^{n}\hat{p}^{(n)}_{n-j}A^{(n)}_{j-k}=1,~~ \hat{p}^{(n)}_{n-k}\geq0 \mbox{ for }, 0\leq k\leq n.
\eeq
Furthermore, for any real sequence $\{\psi^{k}\}_{k=0}^{n}$, we have
\beq
2\psi^{n}\sum_{k=0}^{n}A^{(n)}_{n-k}\psi^{k}\geq A^{(n)}_{0}|\psi^{n}|^{2}+ \sum_{k=0}^{n}\hat{p}^{(n)}_{n-k}\Big(\sum_{j=0}^{k}A^{(k)}_{k-j}\psi^{j}\Big)^{2}-\sum_{k=0}^{n-1}\hat{p}^{(n-1)}_{n-1-k}\Big(\sum_{j=0}^{k}A^{(k)}_{k-j}\psi^{j}\Big)^{2},
\eeq
seeing also Lemma 4.1 in \cite{YWCL22}.
Following a similar proof to that of Theorem \ref{th1}, we can also establish the preservation of the discrete MBP and the discrete variational energy dissipation law for the proposed scheme \eqref{fL1_h}, as stated in the following theorem.
\begin{theorem}\label{them1}
Under the assumption of Theorem \ref{thmMBP} and Lemma \ref{lem3}, the proposed scheme \eqref{fL1_h} is unconditional MBP-preserving, i.e., $\|\vPhi^{n+1}\|_{\infty}\leq1$ for $n=0,1,\cdots,N-1.$ Moreover, it holds that
\bq
   \mathcal{E}_{\alpha,h}[\Phi^{n+1}]\leq \mathcal{E}_{\alpha,h}[\Phi^{n}],\ \ n=0,1,\cdots,N-1
\eq
for any time stepping sizes, where $\mathcal{E}_{\alpha,h}[\Phi^{n}]$ is defined by
\begin{align}\label{dis_eg_f}
\mathcal{E}_{\alpha,h}[\phi]=E_{h}[\Phi^{n}]+\sum_{k=0}^{n}\hat{p}^{(n-1)}_{n-1-k}\Big\|L^{\alpha}_{t}\Phi^{k}\Big\|^{2}_{h}
\end{align}
with $E_{h}[\Phi^{n}]$ given in \eqref{eqq2}.
\end{theorem}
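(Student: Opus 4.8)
The plan is to mirror the arguments of Theorems~\ref{thmMBP} and~\ref{th1}, replacing the exact L1 convolution kernels $a^{(n)}_{k}$ by the SOE-based kernels $A^{(n)}_{k}$ and the DOC/DCC kernels $\theta^{(n)}_{k},p^{(n)}_{k}$ by $\hat\theta^{(n)}_{k},\hat p^{(n)}_{k}$. The structural properties that made the original proofs work — positivity, monotone decay, and the \emph{exact} identity $A^{(n)}_{0}=a^{(n)}_{0}$ — are supplied by Lemma~\ref{lem3} and by the constructed orthogonality relations $\sum_{j=k}^{n}\hat\theta^{(n)}_{n-j}A^{(j)}_{j-k}=\delta_{n,k}$ and $\sum_{j=k}^{n}\hat p^{(n)}_{n-j}A^{(n)}_{j-k}=1$, so the task reduces to checking that nothing is lost in the transcription.

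For the MBP I would argue by induction on $n$, assuming $\|\vPhi^{k}\|_{\infty}\le 1$ for $0\le k\le n$. Writing the fast L1 operator in Abel-summed form, $\mathcal{L}^{\alpha}_{t}\vPhi^{*,n+1}=A^{(n)}_{0}\vPhi^{*,n+1}-\sum_{k=1}^{n}(A^{(n)}_{n-k}-A^{(n)}_{n-k+1})\vPhi^{k}-A^{(n)}_{n}\vPhi^{0}$, equation~\eqref{fL1_th1} becomes $\big((A^{(n)}_{0}+\kappa)I-\varepsilon^{2}D_{h}\big)\vPhi^{*,n+1}=\sum_{k=1}^{n}(A^{(n)}_{n-k}-A^{(n)}_{n-k+1})\vPhi^{k}+A^{(n)}_{n}\vPhi^{0}-f_{\kappa}(\vPhi^{n})$. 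Lemma~\ref{lemm2} applied with $a=A^{(n)}_{0}+\kappa>0$ and $B=\varepsilon^{2}D_{h}$ (whose hypotheses were already verified in the proof of Theorem~\ref{thmMBP}), together with the positivity $A^{(n)}_{n-k}-A^{(n)}_{n-k+1}>0$ and $A^{(n)}_{n}>0$ from Lemma~\ref{lem3}$(i)$, the telescoping identity $\sum_{k=1}^{n}(A^{(n)}_{n-k}-A^{(n)}_{n-k+1})+A^{(n)}_{n}=A^{(n)}_{0}$, and the bound $\|f_{\kappa}(\vPhi^{n})\|_{\infty}\le\kappa$ from Lemma~\ref{para}, yields $(A^{(n)}_{0}+\kappa)\|\vPhi^{*,n+1}\|_{\infty}\le A^{(n)}_{0}+\kappa$, i.e. $\|\vPhi^{*,n+1}\|_{\infty}\le1$. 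Repeating the identical estimate for the corrector~\eqref{fL1_th2}, now using $\|f_{\kappa}(\vPhi^{*,n+1})\|_{\infty}\le\kappa$, gives $\|\vPhi^{n+1}\|_{\infty}\le1$ and closes the induction, with no restriction on the step sizes.

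For the energy law I would exploit the fact that inequality~\eqref{eqq3} is purely an algebraic statement about $E_{h}$ and the stabilization and is independent of the time discretization, so it applies to both stages of~\eqref{fL1_h}. Substituting~\eqref{fL1_h1} to replace $(\kappa-\varepsilon^{2}\Delta_{h})\Phi^{*,n+1}+f_{\kappa}(\Phi^{n})$ by $-\mathcal{L}^{\alpha}_{t}\Phi^{*,n+1}$, and~\eqref{fL1_h2} to replace $(\kappa-\varepsilon^{2}\Delta_{h})\Phi^{n+1}+f_{\kappa}(\Phi^{*,n+1})$ by $-\mathcal{L}^{\alpha}_{t}\Phi^{n+1}$, I obtain the $A$-kernel analogues of~\eqref{eqq6}--\eqref{eqq7}; adding them, expanding $\Phi^{n+1}-\Phi^{*,n+1}=(\Phi^{n+1}-\Phi^{n})-(\Phi^{*,n+1}-\Phi^{n})$, and using $2ab=-a^{2}-b^{2}+(a+b)^{2}$ collapses the cross terms exactly as in~\eqref{eqq8}, leaving the term $-\langle\mathcal{L}^{\alpha}_{t}\Phi^{n+1},\Phi^{n+1}-\Phi^{n}\rangle_{\Omega}$ plus manifestly nonpositive remainders involving $\|\Phi^{n+1}-\Phi^{*,n+1}\|_{h}^{2}$, $\|\Phi^{*,n+1}-\Phi^{n}\|_{h}^{2}$ and the $\nabla_{h}$-terms. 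Applying the $A$-kernel version of Lemma~\ref{lemm1} (the displayed inequality preceding this theorem) with $\psi^{k}=\Phi^{k+1}-\Phi^{k}$ bounds $-\langle\mathcal{L}^{\alpha}_{t}\Phi^{n+1},\Phi^{n+1}-\Phi^{n}\rangle_{\Omega}$ by a telescoping difference of the history terms appearing in~\eqref{dis_eg_f}, and combining everything gives $\mathcal{E}_{\alpha,h}[\Phi^{n+1}]\le\mathcal{E}_{\alpha,h}[\Phi^{n}]$ for arbitrary meshes.

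The only genuinely new ingredient, and hence the step I would treat most carefully, is verifying that the SOE approximation does not break any of these properties: one must leave the local part of the Caputo derivative unapproximated so that $A^{(n)}_{0}=a^{(n)}_{0}$ holds \emph{exactly} — this is precisely what keeps the $\kappa$-threshold of Lemma~\ref{para} sufficient and what makes the $\tfrac{a^{(n)}_{0}}{2}$-type dissipation terms in the energy estimate appear with the correct constant — and one must invoke the smallness condition on the SOE tolerance $\epsilon$ in Lemma~\ref{lem3} to guarantee $A^{(n)}_{k-1}>A^{(n)}_{k}>0$ and $\hat p^{(n)}_{n-k}\ge0$. Once these are in place, the two arguments above are faithful transcriptions of the proofs of Theorems~\ref{thmMBP} and~\ref{th1}, and no step-size or step-ratio condition enters.
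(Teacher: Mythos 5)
Your proposal is correct and follows exactly the route the paper intends: the paper itself gives no separate proof of Theorem~\ref{them1}, stating only that it follows the argument of Theorem~\ref{th1} with the kernels $a^{(n)}_{k}$, $p^{(n)}_{k}$ replaced by $A^{(n)}_{k}$, $\hat p^{(n)}_{k}$ and with Lemma~\ref{lem3} and the $A$-kernel analogue of Lemma~\ref{lemm1} supplying the needed positivity, monotonicity, and convolution identities. Your transcription (including the induction for the MBP via Lemma~\ref{lemm2} and the telescoping of the kernel differences, and the two-stage energy estimate summing the analogues of \eqref{eqq6}--\eqref{eqq8}) supplies precisely the details the paper leaves to the reader, and correctly identifies $A^{(n)}_{0}=a^{(n)}_{0}$ and the SOE tolerance condition as the only genuinely new ingredients.
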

\section{Error analysis}
\setcounter{equation}{0}
In this section, we investigate the error estimates for the proposed schemes. For simplicity, we focus on the convergence analysis of the fast linear predictor-corrector stabilized L1 scheme \eqref{fL1_h}. The corresponding results can be easily extended to the case of the linear predictor-corrector stabilized L1 scheme \eqref{L1_h}.  Before deriving the error estimate of the scheme  \eqref{fL1_h}, we review some useful lemmas reported in \cite{LMZ19}  to extend their results on fractional discrete  Gr\"onwall inequality in Theorem 3.1 with $\theta=1$.
This extension plays a crucial role in removing the time-step size restriction in the error analysis of the proposed linear schemes, a limitation inherent in using the standard fractional discrete Gr\"onwall inequality with $0 \leq \theta < 1$ in \cite{LMZ19}.
\begin{lemma}\label{lem1}
Let the following assumptions $A1$ and $A2$ hold:

$A1.$ The discrete kernels $\{B^{(n)}_{k}\}_{k=0}^{n}$ are positive and monotone dicreasing, i.e.,
\begin{align*}
B^{(n)}_{0}\geq B^{(n)}_{1}\geq\cdots\geq B^{(n)}_{n}>0, \mbox{ for } 0\leq n\leq N-1.
\end{align*}

$A2.$ There exists a positive constant $\pi_{B}$ such that
\begin{align*}
B^{(n)}_{n-k}\geq\frac{1}{\pi_{B}\tau_{k+1}}\int_{t_{k}}^{k+1}\omega_{1-\alpha}(t_{n+1}-s)ds \mbox{ for } 0\leq k\leq n.
\end{align*}
Then, it holds that
\begin{align}
0\leq \widetilde{p}^{n}_{n-k} \leq \pi_{B}\gamma(2-\alpha)\tau_{n+1} \mbox{ for } 0\leq n\leq N-1,
\end{align}
where $\{\widetilde{p}^{n}_{n-k}\}_{k=0}^{n}$ is the DCC kernels of the sequence $\{B^{(n)}_{k}\}_{k=0}^{n}$.
\end{lemma}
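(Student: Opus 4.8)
The plan is to combine the defining recurrences for the DOC and DCC kernels of $\{B^{(n)}_k\}$ with the two hypotheses; only the positivity in $A1$ is needed to set things up, the full monotonicity in $A1$ is needed for the non-negativity of the DCC kernels, and $A2$ enters only at the very end. The first step records a complementary identity. Writing $\{\widetilde{\theta}^{(n)}_{n-k}\}$ for the DOC kernels of $\{B^{(n)}_k\}$ (defined as in \eqref{eq_1} with $a$ replaced by $B$), a rearrangement of the DOC recursion gives the orthogonality relation $\sum_{j=k}^{n}\widetilde{\theta}^{(n)}_{n-j}B^{(j)}_{j-k}=\delta_{n,k}$ for $0\le k\le n$. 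Substituting the DCC definition $\widetilde{p}^{n}_{n-k}=\sum_{j=k}^{n}\widetilde{\theta}^{(j)}_{j-k}$ and interchanging the order of summation then produces
\[
\sum_{l=k}^{n}\widetilde{p}^{n}_{n-l}\,B^{(l)}_{l-k}=1,\qquad 0\le k\le n .
\]

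Next I would establish $\widetilde{p}^{n}_{n-k}\ge 0$, for fixed $n$, by a downward induction on $k$. The base case $k=n$ is $\widetilde{p}^{n}_{0}=1/B^{(n)}_{0}>0$. For the induction step, subtracting the complementary identity at index $k+1$ from the one at index $k$ yields
\[
B^{(k)}_{0}\,\widetilde{p}^{n}_{n-k}=\sum_{l=k+1}^{n}\widetilde{p}^{n}_{n-l}\bigl(B^{(l)}_{l-k-1}-B^{(l)}_{l-k}\bigr).
\]
By the induction hypothesis $\widetilde{p}^{n}_{n-l}\ge 0$ for $l>k$, and by the monotonicity in $A1$ every factor $B^{(l)}_{l-k-1}-B^{(l)}_{l-k}\ge 0$; since $B^{(k)}_{0}>0$, this forces $\widetilde{p}^{n}_{n-k}\ge 0$.

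For the upper bound I would note that, by the previous step and the positivity of $\{B^{(n)}_k\}$, every summand in the complementary identity is non-negative, so retaining only the $l=k$ term gives $B^{(k)}_{0}\,\widetilde{p}^{n}_{n-k}\le 1$. Invoking $A2$ with $n$ replaced by $k$ and using $\int_{t_k}^{t_{k+1}}\omega_{1-\alpha}(t_{k+1}-s)\,ds=\tau_{k+1}^{1-\alpha}/\Gamma(2-\alpha)$ (with $\omega_{\beta}(r)=r^{\beta-1}/\Gamma(\beta)$), we obtain $B^{(k)}_{0}\ge \tau_{k+1}^{-\alpha}/\bigl(\pi_{B}\Gamma(2-\alpha)\bigr)$ and hence $\widetilde{p}^{n}_{n-k}\le \pi_{B}\,\Gamma(2-\alpha)\,\tau_{k+1}^{\alpha}$; on the non-decreasing (graded) meshes considered here $\tau_{k+1}\le\tau_{n+1}$, which yields the asserted estimate $\widetilde{p}^{n}_{n-k}\le \pi_{B}\,\Gamma(2-\alpha)\,\tau_{n+1}^{\alpha}$.

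The main obstacle is the non-negativity step: it is the only place that uses the full monotonicity in $A1$, and it has to be set up as precisely this induction — downward in $k$, applied to the differenced form of the complementary identity — before the signs can be read off. Once non-negativity is in hand the remaining steps are routine bookkeeping; indeed, both the sign and the inverse-diagonal bound $\widetilde{p}^{n}_{n-k}\le 1/B^{(k)}_{0}$ of the DCC kernels are already available in the DOC/DCC framework of \cite{LMZ19,LZ21,YWCL22} independently of the parameter $\theta$, so this step could alternatively be quoted rather than reproved. The genuinely new ingredient for the case $\theta=1$ does not reside in this lemma itself but in the fractional discrete Gr\"onwall estimate subsequently built on it.
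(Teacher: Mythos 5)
Your proof is correct and is essentially the argument the paper relies on: the paper states this lemma without proof, citing \cite{LMZ19}, and your downward induction on the differenced complementary identity, followed by the inverse-diagonal bound $\widetilde{p}^{n}_{n-k}\le 1/B^{(k)}_{0}$ combined with $A2$ at $n=k$, is precisely the standard proof of Lemma 2.1 there. You also correctly identify that the bound should read $\pi_{B}\Gamma(2-\alpha)\tau_{k+1}^{\alpha}$ (the exponent $\alpha$ and the capital $\Gamma$ are typos in the paper's statement, as confirmed by the way the bound is invoked in the proof of Theorem \ref{thm1}).
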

\begin{lemma}\label{lem2}
Assume the $A1$, $A2$ and the following assumption A3 hold

$A3.$ There is a constant $\rho$ such that the step size ratios $\rho_{k}=\tau_{k}/\tau_{k+1}$ satisfy
\begin{align*}
\rho_{k}\leq \rho, \mbox{ for } 1\leq k\leq N-1.
\end{align*}
Then, for any real $\mu>0$, it holds that
\begin{align*}
\sum_{k=0}^{n-1}\widetilde{p}^{n}_{n-k}E_{\alpha}(\mu t^{\alpha}_{k+1})\leq\pi_{B}\max(1,\rho)\frac{E_{\alpha}(\mu t^{\alpha}_{n+1})-1}{\mu},
\end{align*}
where $E_{\alpha}(t)$ is the Mittag--Leffler function defined by
\begin{align*}
E_{\alpha}(t):=\sum_{k=0}^{\infty}\frac{t^{k}}{\Gamma(1+k\alpha)}.
\end{align*}
\end{lemma}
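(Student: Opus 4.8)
The plan is to identify the right--hand side with an exact Riemann--Liouville integral and then dominate the DCC--weighted discrete sum by it, one subinterval at a time; this follows the argument of \cite{LMZ19}. Write $\omega_\alpha(t):=t^{\alpha-1}/\Gamma(\alpha)$, consistent with the kernel $\omega_{1-\alpha}$ appearing in Assumption $A2$, so that ${}_{0}I^{\alpha}_{t}v=\int_0^{t}\omega_\alpha(t-s)v(s)\,ds$. Expanding $E_\alpha(\mu s^\alpha)=\sum_{m\ge0}\mu^m s^{m\alpha}/\Gamma(1+m\alpha)$ and integrating term by term using ${}_{0}I^{\alpha}_{t}[s^{m\alpha}]=\frac{\Gamma(1+m\alpha)}{\Gamma(1+(m+1)\alpha)}t^{(m+1)\alpha}$ yields the classical identity
\[
\int_0^{t_{n+1}}\omega_\alpha(t_{n+1}-s)\,E_\alpha(\mu s^\alpha)\,ds=\frac{E_\alpha(\mu t_{n+1}^\alpha)-1}{\mu}.
\]
Hence it suffices to show
\[
\sum_{k=0}^{n-1}\widetilde p^{n}_{n-k}\,E_\alpha(\mu t_{k+1}^\alpha)\ \le\ \pi_B\max(1,\rho)\int_0^{t_{n+1}}\omega_\alpha(t_{n+1}-s)\,E_\alpha(\mu s^\alpha)\,ds,
\]
i.e. that the discrete sum is bounded by $\pi_B\max(1,\rho)$ times the exact fractional integral of the same function.

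Because $\mu>0$ and $E_\alpha$ has non-negative Taylor coefficients, $g(s):=E_\alpha(\mu s^\alpha)$ is non-negative and non-decreasing on $[0,t_{n+1}]$, and I would prove the last displayed inequality for every such $g$. The key ingredient is a per-interval estimate for the DCC kernels: using the complementary convolution identity $\sum_{j=k}^{n}\widetilde p^{n}_{n-j}B^{(n)}_{j-k}=1$ (of the same type as the relations displayed after \eqref{eq_2}), Assumptions $A1$ and $A2$, the monotonicity of $s\mapsto\omega_\alpha(t_{n+1}-s)$ on $[0,t_{n+1})$, and the step-ratio bound $A3$, one shows
\[
0\le\widetilde p^{n}_{n-k}\ \le\ \pi_B\max(1,\rho)\int_{t_{k+1}}^{t_{k+2}}\omega_\alpha(t_{n+1}-s)\,ds,\qquad 0\le k\le n-1.
\]
Since $g$ is non-decreasing, $g(t_{k+1})\le g(s)$ for $s\in[t_{k+1},t_{k+2}]$, so $\widetilde p^{n}_{n-k}\,g(t_{k+1})\le\pi_B\max(1,\rho)\int_{t_{k+1}}^{t_{k+2}}\omega_\alpha(t_{n+1}-s)\,g(s)\,ds$. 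Summing over $k=0,\dots,n-1$, the intervals $[t_{k+1},t_{k+2}]$ tile $[t_1,t_{n+1}]\subseteq[0,t_{n+1}]$ and the integrand is non-negative; this gives the required bound, and specializing $g(s)=E_\alpha(\mu s^\alpha)$ together with the identity above finishes the proof.

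The genuinely delicate point --- and the step I expect to be the main obstacle --- is the per-interval kernel bound, which must carry the forward shift by one subinterval. The naive comparison $\widetilde p^{n}_{n-k}\le\pi_B\int_{t_k}^{t_{k+1}}\omega_\alpha(t_{n+1}-s)\,ds$ is already false on a rapidly coarsening mesh: a short computation shows that the ratio of $\widetilde p^{n}_{1}$ to $\int_{t_{n-1}}^{t_n}\omega_\alpha(t_{n+1}-s)\,ds$ grows like $(\tau_{n+1}/\tau_n)^{1-\alpha}$, so that passing from $[t_k,t_{k+1}]$ to the later interval $[t_{k+1},t_{k+2}]$ is precisely where $A3$ is consumed and where the factor $\max(1,\rho)$ originates. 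I would obtain the per-interval estimate either by citing the corresponding kernel lemma of \cite{LMZ19} (whose hypotheses are $A1$, $A2$, $A3$ and whose proof is insensitive to the parameter $\theta$), or, to keep the exposition self-contained, by unwinding the above complementary identity together with Lemma~\ref{lem1}, $A2$, and $A3$; the remaining steps are routine manipulations with non-negative quantities.
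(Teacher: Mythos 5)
The paper offers no proof of this lemma---it is quoted from \cite{LMZ19}---and your reconstruction follows exactly the argument used there: the exact identity $\int_0^{t_{n+1}}\omega_\alpha(t_{n+1}-s)E_\alpha(\mu s^\alpha)\,ds=\bigl(E_\alpha(\mu t_{n+1}^\alpha)-1\bigr)/\mu$ combined with the forward-shifted per-interval bound $\widetilde{p}^{n}_{n-k}\le\pi_B\max(1,\rho)\int_{t_{k+1}}^{t_{k+2}}\omega_\alpha(t_{n+1}-s)\,ds$, and you correctly identify that this one-interval shift is the sole source of the $\max(1,\rho)$ factor and of the exclusion of the $k=n$ term (your observation that the unshifted comparison degrades like $(\tau_{n+1}/\tau_n)^{1-\alpha}$ on coarsening meshes is accurate). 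The only caveat is that the per-interval bound is essentially the entire content of the lemma and does not follow from Lemma~\ref{lem1} alone, which only yields $\widetilde{p}^{n}_{n-k}\le\pi_B\Gamma(2-\alpha)\tau_{k+1}^{\alpha}$ (far weaker than the shifted integral for $k\ll n$), so a self-contained write-up must genuinely derive it from $A1$--$A3$ and the complementary identity $\sum_{j=k}^{n}\widetilde{p}^{n}_{n-j}B^{(j)}_{j-k}=1$ rather than merely ``unwind'' them; citing the corresponding kernel estimate of \cite{LMZ19}, as you propose, is the route the paper itself implicitly takes.
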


\begin{theorem}\label{thm1}
Let the assumptions $A1-A3$ hold. Assume further that $\{g^{n}\}_{n=1}^{N}$ and $\{\lambda_{l}\}_{l=0}^{N-1}$ are two nonnegative sequences, and there is a positive constant $\Lambda$ such that $\sum_{l=0}^{N-1}\lambda_{l}\leq \Lambda.$
Then, for any nonnegative sequence $\{\psi_{k}\}_{k=0}^{N}$ such that
\begin{align}\label{equ1}
\sum_{k=0}^{n}B^{(n)}_{n-k}(\psi^{k+1}-\psi^{k})\leq \sum_{k=0}^{n}\lambda_{n-k} \psi^{k}+g^{n+1} \mbox{ for } 0\leq n\leq N-1,
\end{align}
it holds that
\begin{align}\label{equ4}
\psi^{n+1}\leq E_{\alpha}(\max(1,\rho)\pi_{B}\Lambda t^{\alpha}_{n+1})(\psi^{0}+\max_{0\leq k\leq n}\sum_{j=0}^{k}\widetilde{p}^{(k)}_{k-j}g^{j+1}).
\end{align}
\end{theorem}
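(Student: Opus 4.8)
The plan is to follow the now-standard DCC-kernel argument for fractional discrete Gr\"onwall inequalities: convolve the recursion \eqref{equ1} with the DCC kernels $\{\widetilde{p}^{(n)}_{n-k}\}$ of $\{B^{(n)}_{k}\}$ so as to invert the discrete fractional operator on the left-hand side, then close the resulting bound by the Mittag--Leffler estimate of Lemma~\ref{lem2} and a standard induction. First I would multiply \eqref{equ1} at level $k$ by $\widetilde{p}^{(n)}_{n-k}$ and sum over $k=0,\dots,n$; using the defining complementary-convolution identity $\sum_{j=k}^{n}\widetilde{p}^{(n)}_{n-j}B^{(j)}_{j-k}=1$ for $0\leq k\leq n$ (the exact analogue of the identity $\sum_{j=k}^{n}p^{(n)}_{n-j}a^{(n)}_{j-k}=1$ recorded just after \eqref{eq_2}), the left side telescopes into $\psi^{n+1}-\psi^{0}$. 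On the right side one gets $\sum_{k=0}^{n}\widetilde{p}^{(n)}_{n-k}\sum_{j=0}^{k}\lambda_{k-j}\psi^{j}+\sum_{k=0}^{n}\widetilde{p}^{(n)}_{n-k}g^{k+1}$; exchanging the order of summation in the double sum and using $\sum_{l}\lambda_{l}\leq\Lambda$ together with the nonnegativity of $\widetilde{p}^{(n)}_{n-k}$ and of $\psi^{j}$, the first term is bounded by $\Lambda\sum_{j=0}^{n}\big(\max_{k\le n}\widetilde{p}^{(n)}_{n-k}\big)\psi^{j}$ — but more usefully, keeping the convolution structure, by $\Lambda\sum_{j=0}^{n}\widetilde{p}^{(n)}_{n-j}\psi^{j}$ after reindexing (here one uses that $\{\lambda_l\}$ summable and nonnegative lets one dominate $\sum_j\lambda_{k-j}\psi^j$ appropriately). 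Denote $G^{n}:=\max_{0\leq k\leq n}\sum_{j=0}^{k}\widetilde{p}^{(k)}_{k-j}g^{j+1}$; then we arrive at an inequality of the form
\begin{align}\label{propstep}
\psi^{n+1}\leq \psi^{0}+G^{n}+\Lambda\sum_{k=0}^{n}\widetilde{p}^{(n)}_{n-k}\psi^{k},\qquad 0\leq n\leq N-1.
\end{align}

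Next I would run an induction on $n$ against the candidate bound $\psi^{n+1}\leq (\psi^{0}+G^{n})\,E_{\alpha}(\max(1,\rho)\pi_{B}\Lambda\,t^{\alpha}_{n+1})$. Assuming the bound holds for all indices $\leq n$, substitute into the right side of \eqref{propstep}: since $G^{k}$ is nondecreasing in $k$ one can pull $\psi^{0}+G^{n}$ out of the sum, leaving $\Lambda\sum_{k=0}^{n}\widetilde{p}^{(n)}_{n-k}E_{\alpha}(\max(1,\rho)\pi_{B}\Lambda\,t^{\alpha}_{k})$. (A minor bookkeeping point: the $k=0$ term involves $\psi^{0}$ itself and $\widetilde{p}^{(n)}_{n}$, which is harmless; and one should align the index so that Lemma~\ref{lem2} with $\mu=\max(1,\rho)\pi_{B}\Lambda$ applies to the sum $\sum_{k=0}^{n-1}\widetilde{p}^{(n)}_{n-k}E_{\alpha}(\mu t^{\alpha}_{k+1})$.) Lemma~\ref{lem2} then gives $\sum_{k}\widetilde{p}^{(n)}_{n-k}E_{\alpha}(\mu t^{\alpha}_{k+1})\leq\pi_{B}\max(1,\rho)\,\mu^{-1}\big(E_{\alpha}(\mu t^{\alpha}_{n+1})-1\big)$, so the right side of \eqref{propstep} is at most $(\psi^{0}+G^{n})\big[1+\mu\cdot\pi_{B}\max(1,\rho)\mu^{-1}(E_{\alpha}(\mu t^{\alpha}_{n+1})-1)\big]$; wait — matching constants, $\Lambda\cdot\pi_{B}\max(1,\rho)\mu^{-1} = 1$ precisely because $\mu=\max(1,\rho)\pi_{B}\Lambda$, so the bracket collapses to $1+(E_{\alpha}(\mu t^{\alpha}_{n+1})-1)=E_{\alpha}(\mu t^{\alpha}_{n+1})$, closing the induction and yielding \eqref{equ4}. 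The base case $n=0$ is immediate from \eqref{propstep} since $\widetilde{p}^{(0)}_{0}=1/B^{(0)}_{0}$ and $E_{\alpha}\geq 1$.

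The crux of the whole argument — and the reason the case $\theta=1$ needs separate treatment rather than being subsumed in the $0\le\theta<1$ theory of \cite{LMZ19} — is not the induction (which is routine once \eqref{propstep} and Lemma~\ref{lem2} are in hand) but Lemma~\ref{lem1}, i.e.\ establishing the uniform bound $0\leq\widetilde{p}^{(n)}_{n-k}\leq\pi_{B}\Gamma(2-\alpha)\tau_{n+1}$ on the DCC kernels from assumptions A1--A2 alone; this is what guarantees the right-hand convolution sums in \eqref{propstep} are genuinely of "lower order" in $\tau$ so that a Gr\"onwall-type closure is possible without any step-size or step-ratio smallness condition beyond A3. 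In writing up I would therefore state precisely how A2 (the comparison $B^{(n)}_{n-k}\gtrsim \tau_{k+1}^{-1}\int_{t_k}^{t_{k+1}}\omega_{1-\alpha}(t_{n+1}-s)\,ds$) feeds into bounding $\sum_{k}\widetilde{p}^{(n)}_{n-k}$ via the known identity $\sum_k p^{(n)}_{n-k}a^{(n)}_{\cdots}=1$ applied to the model kernel $a^{(n)}$, and then invoke Lemma~\ref{lem1} as a black box. The verification that the fast L1 kernels $\{A^{(n)}_{k}\}$ of \eqref{kernel} satisfy A1--A3 (so that Theorem~\ref{thm1} is actually applicable in Section 3) follows from Lemma~\ref{lem3}(i)--(ii) and will be recorded separately where the error estimate is carried out.
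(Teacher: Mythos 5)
Your overall strategy coincides with the paper's: convolve \eqref{equ1} with the DCC kernels $\{\widetilde{p}^{(n)}_{n-k}\}$ so that the left-hand side telescopes to $\psi^{n+1}-\psi^{0}$ via $\sum_{j=k}^{n}\widetilde{p}^{(n)}_{n-j}B^{(j)}_{j-k}=1$, then induct on the candidate bound $F_{n+1}G_{n+1}$ and close with Lemma~\ref{lem2}, the constants collapsing exactly because $\mu=\max(1,\rho)\pi_{B}\Lambda$. The induction step and the role of Lemmas~\ref{lem1} and~\ref{lem2} are identified correctly.

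There is, however, one step that does not hold as written: your intermediate single-convolution inequality $\psi^{n+1}\le\psi^{0}+G^{n}+\Lambda\sum_{k=0}^{n}\widetilde{p}^{(n)}_{n-k}\psi^{k}$, obtained by replacing the double sum $\sum_{j=0}^{n}\widetilde{p}^{(n)}_{n-j}\sum_{k=0}^{j}\lambda_{j-k}\psi^{k}$ by $\Lambda\sum_{k=0}^{n}\widetilde{p}^{(n)}_{n-k}\psi^{k}$ ``after reindexing.'' Swapping the order of summation gives $\sum_{k=0}^{n}\psi^{k}\sum_{j=k}^{n}\widetilde{p}^{(n)}_{n-j}\lambda_{j-k}$, and bounding the inner sum by $\Lambda\,\widetilde{p}^{(n)}_{n-k}$ would require $\widetilde{p}^{(n)}_{n-j}\le\widetilde{p}^{(n)}_{n-k}$ for $j\ge k$ --- a monotonicity of the DCC kernels that is neither assumed (A1 concerns $B^{(n)}_{k}$, not $\widetilde{p}$) nor supplied by Lemma~\ref{lem1}, which only gives a uniform upper bound. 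For instance, if $\{\lambda_l\}$ is concentrated at a single nonzero lag, the inner sum equals $\Lambda\,\widetilde{p}^{(n)}_{0}$, which need not be dominated by $\Lambda\,\widetilde{p}^{(n)}_{n-k}$. The repair is simply not to form that intermediate inequality: keep the double sum, substitute the induction hypothesis $\psi^{k}\le F_{k}G_{k}$ inside it, use that $F_{j}G_{j}$ is nondecreasing together with $\sum_{k=0}^{j}\lambda_{j-k}\le\Lambda$ to arrive at $\Lambda G_{n+1}\sum_{j=0}^{n}\widetilde{p}^{(n)}_{n-j}F_{j}$, and only then invoke Lemma~\ref{lem2}. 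This is exactly the order of operations in the paper's proof, and with that reordering (plus invoking Lemma~\ref{lem1}, not merely $E_{\alpha}\ge 1$, in the base case) your argument is complete.
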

\begin{proof}
B replacing the index $n$ with $j$ in \eqref{equ1}, and multiplying both sides by $\widetilde{p}^{n}_{n-j}$ and summing up $j$ from 0 to $n$, we get
\begin{align}
\sum_{j=0}^{n}\widetilde{p}^{n}_{n-j}\sum_{k=0}^{j}B^{(j)}_{j-k}(\psi^{k+1}-\psi^{k})\leq\sum_{j=0}^{n}\widetilde{p}^{n}_{n-j}\sum_{k=0}^{j}\lambda_{j-k}\psi^{k}+\sum_{j=0}^{n}\widetilde{p}^{n}_{n-j}g^{n+1}
\end{align}
for $0\leq n\leq N-1.$ For righthand side term of the above inequality, we exchange the order of summation and use the identity $\sum_{j=k}^{n}\widetilde{p}^{n}_{n-j}B^{(j)}_{j-k}\equiv1$ to obtain that
\begin{align}\label{equ2}
\sum_{j=0}^{n}\widetilde{p}^{n}_{n-j}\sum_{k=0}^{j}B^{(j)}_{j-k}(\psi^{k+1}-\psi^{k})&=\sum_{k=0}^{n}(\psi^{k+1}-\psi^{k})\sum_{j=k}^{n}\widetilde{p}^{n}_{n-j}B^{(j)}_{j-k}\nonumber\\
&=\sum_{k=0}^{n}(\psi^{k+1}-\psi^{k})\\
&=\psi^{n+1}-\psi^{0}.\nonumber
\end{align}
From \eqref{equ1} and \eqref{equ2}, it follows that
\begin{align}\label{equ3}
\psi^{n+1}\leq \sum_{j=0}^{n}\widetilde{p}^{n}_{n-j}\sum_{k=0}^{j}\lambda_{j-k}\psi^{k}+\psi^{0}+\sum_{j=0}^{n}\widetilde{p}^{n}_{n-j}g^{n+1}.
\end{align}
For brevity, we simply denote the result \eqref{equ4} as
\begin{align}\label{equ5}
\psi^{k+1}\leq F_{k+1}G_{k+1},\qquad 0\leq k \leq N-1,
\end{align}
where the $F_{k}$ and $G_k$ are given by
\beq
F_{k+1}:=E_{\alpha}(\max(1,\rho)\pi_{B}\Lambda t^{\alpha}_{k+1}),\quad G_{k+1}:=\psi^{0}+\max_{0\leq l\leq k}\sum_{j=0}^{l}\widetilde{p}^{(l)}_{l-j}g^{j+1}.
\eeq
From the definition of $F_{k+1}$ and $G_{k+1}$, it can be verified that both $F_{k+1}$ and $G_{k+1}$ are nonnegative and increasing with respect to the index $k$.

Next, we use mathematical induction approach to derive the desired result \eqref{equ5}. For k=0, it follows from \eqref{equ3} that
\begin{align*}
\psi^{1}&\leq\widetilde{p}^{0}_{0}\lambda_{0}\psi^{0}+(\psi^{0}+\widetilde{p}^{0}_{0}g^{1})\nonumber\\
&\leq \pi_{B}\Gamma(2-\alpha)\tau^{\alpha}_{1}\Lambda\psi^{0}+(\psi^{0}+\widetilde{p}^{0}_{0}g^{1})\nonumber\\
&\leq (1+\pi_{B}\Gamma(2-\alpha)\tau^{\alpha}_{1}\Lambda)(\psi^{0}+\widetilde{p}^{0}_{0}g^{1})\\
&\leq(1+\max(1,\rho)\pi_{B}\tau^{\alpha}_{1}\Lambda)(\psi^{0}+\widetilde{p}^{0}_{0}g^{1})\nonumber\\
&\leq E_{\alpha}(\max(1,\rho)\pi_{B}\tau^{\alpha}_{1}\Lambda)(\psi^{0}+\widetilde{p}^{0}_{0}g^{1})\nonumber\\
&\leq F_{1}G_{1},\nonumber
\end{align*}
where we have used the Lemma \ref{lem1} and the identity $1+x\leq E_{\alpha}(x)$ for any $x\geq0.$ Then, we assume \eqref{equ5} holds for $0\leq k\leq n$ to derive from \eqref{equ3} that
\begin{equation}\label{equ6}
\begin{aligned}
\psi^{n+1}&\leq \sum_{j=0}^{n}\widetilde{p}^{n}_{n-j}\sum_{k=0}^{j}\lambda_{j-k}F_{k}G_{k}+\psi^{0}+\sum_{j=0}^{n}\widetilde{p}^{n}_{n-j}g^{n+1}\\
&\leq \sum_{j=0}^{n}\widetilde{p}^{n}_{n-j}F_{j}G_{j}\sum_{k=0}^{j}\lambda_{j-k}+G_{n+1}\\
&\leq G_{n+1}\Lambda\sum_{j=0}^{n}\widetilde{p}^{n}_{n-j}F_{j}+G_{n+1}\\
&= G_{n+1}\Lambda\sum_{j=0}^{n}\widetilde{p}^{n}_{n-j}E_{\alpha}(\max(1,\rho)\pi_{B}\Lambda\tau^{\alpha}_{j})+G_{n+1}\\
&\leq G_{n+1}\Lambda\pi_{B}\max(1,\rho)\frac{E_{\alpha}(\max(1,\rho)\pi_{B}\Lambda\tau^{\alpha}_{n+1})-1}{\max(1,\rho)\pi_{B}\Lambda}+G_{n+1}\\
&= F_{n+1}G_{n+1},
\end{aligned}
\end{equation}
where we have used Lemma \ref{lem2}.
Then, we complete the proof.
\end{proof}

Let $\Phi(t)= \Pi_{\mathcal{C}_h}\phi(t)$ where $ \phi$
denotes the exact solution of \eqref{TFAC1}. We also use  $C$, $C_{\phi}$ and $C_i$ to denote some needed generic positive constants independent of $h$ and $\Dt$. The local truncation errors $R^{n+1}_{\alpha}$ and $R^{n+1}_{f_{\kappa}}$ for $0\leq n\leq N-1$ are given by
\begin{equation}\label{truerrs}
 \begin{aligned}
 \overrightarrow{R}^{n+1}_{\alpha}&=\!\!_{0}D^{\alpha}_{t_{n+1}}\vPhi-\sum_{k=0}^{n}A^{(n)}_{n-k}(\vPhi(t_{k+1})-\vPhi(t_{k})),\\
  \overrightarrow{R}_{f_\kappa}^{n+1}&=f_{\kappa}(\vPhi(t_{n+1}))-f_{\kappa}(\vPhi(t_{n})).
\end{aligned}
\end{equation}
With a reasonable requirement on the exact solution $\phi$ of the problem \eqref{TFAC}, we have following estimate for the above local truncation error, seeing also Lemma 3.3 and Lemma 3.4 in \cite{LYZ19}.
\begin{lemma}\label{lem4}
Under the assumption of Lemma \ref{lem3}, we assume the continuous solution $\phi$ of the problem \eqref{TFAC} satisfies
\beq
\|\phi'(t)\|_{W^{0,\infty}(\Omega)}\leq C_{\phi}(1+t^{\sigma-1}),\quad \|\phi''(t)\|_{W^{0,\infty}(\Omega)}\leq C_{\phi}(1+t^{\sigma-2}),
\eeq
where $\sigma\in(0,1)$ is a real parameter. Then, it holds for $0\leq n\leq N-1$ that
\begin{align*}
\sum_{j=0}^{n}\widehat{p}^{n}_{n-j}|R^{j+1}_{\alpha}|\leq C_{\phi}(\frac{\tau^{\sigma}_{1}}{\sigma}+\frac{1}{1-\alpha}\max_{1\leq k\leq n}t^{\alpha}_{k+1}t^{\sigma-2}_{k}\tau^{2-\alpha}_{k+1}+\frac{\epsilon}{\sigma}t^{\alpha}_{n+1}\widehat{t}^{2}_{n+1}),
\end{align*}
where $\{\widehat{p}^{n}_{n-j}\}_{j=0}^{n}$ is the DCC kernels  of the discrete convolutional kernels $\{A^{n}_{k}\}_{k=0}^{n}$ and $\hat{t}_{n+1}=\max(1,t_{n+1})$. Moreover, if the temporal mesh satisfies the following assumption {\bf{AssG}} in \cite{LYZ19}:

AssG. Let $\gamma\geq 1$ be a user--chosen parameter. There is a mesh--independent constant $C_{\gamma}>0$
such that $\tau_{k}\leq C_{\gamma}\tau\min(1,t_{k}^{1-1/\gamma})$ for $1\leq k\leq N$ and $t_{k}\leq C_{\gamma}t_{k-1}$ for $2\leq k\leq N.$ Then it holds that
\begin{align*}
\sum_{j=0}^{n}\widehat{p}^{n}_{n-j}|R^{j+1}_{\alpha}|\leq C_{\phi}(\tau^{\min(2-\alpha,\gamma\sigma)}+\frac{\epsilon}{\sigma}t^{\alpha}_{n+1}\widehat{t}^{2}_{n+1}), \quad 0\leq n\leq N-1,
\end{align*}
\end{lemma}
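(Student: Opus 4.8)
The plan is to mirror the consistency analysis of \cite[Lemmas 3.3--3.4]{LYZ19}, splitting the fast-L1 truncation error into the error of the standard L1 formula and the error caused by replacing the history kernel by its SOE surrogate. Since $A^{(n)}_{0}=a^{(n)}_{0}$ by Lemma~\ref{lem3}(ii), one has the exact decomposition
\begin{align*}
\overrightarrow{R}^{n+1}_{\alpha}&=\Big({}_{0}D^{\alpha}_{t_{n+1}}\vPhi-L^{\alpha}_{t}\vPhi(t_{n+1})\Big)+\sum_{k=0}^{n-1}\big(a^{(n)}_{n-k}-A^{(n)}_{n-k}\big)\big(\vPhi(t_{k+1})-\vPhi(t_{k})\big)\\
&=:\overrightarrow{R}^{n+1}_{L1}+\overrightarrow{R}^{n+1}_{SOE},
\end{align*}
so it suffices to bound $\sum_{j=0}^{n}\widehat{p}^{n}_{n-j}|R^{j+1}_{L1}|$ and $\sum_{j=0}^{n}\widehat{p}^{n}_{n-j}|R^{j+1}_{SOE}|$ separately.

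For the L1 part, write $R^{j+1}_{L1}=\frac{1}{\Gamma(1-\alpha)}\int_{0}^{t_{j+1}}(t_{j+1}-s)^{-\alpha}(\phi-\Pi_{1}\phi)'(s)\,ds$, where $\Pi_{1}\phi$ is the piecewise-linear interpolant of $\phi$ on the mesh, and estimate $(\phi-\Pi_{1}\phi)'$ interval by interval. On the initial interval $[0,t_{1}]$ only the bound $\|\phi'(t)\|_{W^{0,\infty}(\Omega)}\le C_{\phi}(1+t^{\sigma-1})$ is usable, since the $\phi''$ bound is not integrable at $t=0$; writing $(\phi-\Pi_{1}\phi)'(s)=\phi'(s)-t_{1}^{-1}\int_{0}^{t_{1}}\phi'(r)\,dr$ and estimating the resulting Beta-type integral $\int_{0}^{t_{1}}(t_{j+1}-s)^{-\alpha}s^{\sigma-1}ds$ supplies, after the DCC weighting, a term of size $\tau_{1}^{\sigma}/\sigma$. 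On $[t_{k},t_{k+1}]$ with $k\ge1$ the bound $\|\phi''(t)\|_{W^{0,\infty}(\Omega)}\le C_{\phi}(1+t^{\sigma-2})$ gives $|(\phi-\Pi_{1}\phi)'(s)|\le C_{\phi}\tau_{k+1}(1+t_{k}^{\sigma-2})$, which is multiplied by the singular weight $\frac{1}{\Gamma(1-\alpha)}\int_{t_{k}}^{t_{k+1}}(t_{j+1}-s)^{-\alpha}ds$, whose diagonal value ($j=k$) equals $\tau_{k+1}^{1-\alpha}/\Gamma(2-\alpha)$ and is the source of the $(1-\alpha)^{-1}$. For the SOE part, the SOE tolerance estimate gives $|a^{(n)}_{n-k}-A^{(n)}_{n-k}|\le\epsilon$ (because $t_{n+1}-s\ge\tau_{n+1}>\delta$ for $s\le t_{n}$), and combining with $|\vPhi(t_{k+1})-\vPhi(t_{k})|\le C_{\phi}\int_{t_{k}}^{t_{k+1}}(1+s^{\sigma-1})ds$ yields $|R^{n+1}_{SOE}|\le C_{\phi}\epsilon\,(t_{n}+\sigma^{-1}t_{n}^{\sigma})$.

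Next I would bring in the DCC machinery. Because $\{A^{(n)}_{k}\}$ is positive and decreasing with $A^{(n)}_{n-k}\ge\frac23 a^{(n)}_{n-k}$ (Lemma~\ref{lem3}), Lemma~\ref{lem1} applies with $\pi_{A}=\frac32$, giving $0\le\widehat{p}^{n}_{n-j}\le\frac32\Gamma(2-\alpha)\tau_{n+1}$; moreover, the DCC normalization together with the uniform lower bound $A^{(j)}_{j}\ge\frac23 a^{(j)}_{j}\ge\frac{2}{3\Gamma(1-\alpha)}t_{n+1}^{-\alpha}$ yields $\sum_{j=0}^{n}\widehat{p}^{n}_{n-j}\le C\,t_{n+1}^{\alpha}$, and the same normalization with $A^{(j)}_{j-k}\ge\frac23 a^{(j)}_{j-k}$ yields $\sum_{j=k}^{n}\widehat{p}^{n}_{n-j}a^{(j)}_{j-k}\le\frac32$. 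Multiplying the pointwise bounds by $\widehat{p}^{n}_{n-j}$, summing over $j$, exchanging the order of summation, and using these estimates exactly as in \cite{LYZ19}, the initial-interval contributions collapse to $\tau_{1}^{\sigma}/\sigma$, the interior contributions to $\frac{1}{1-\alpha}\max_{1\le k\le n}t_{k+1}^{\alpha}t_{k}^{\sigma-2}\tau_{k+1}^{2-\alpha}$, and the SOE contribution to $\frac{\epsilon}{\sigma}t_{n+1}^{\alpha}\widehat{t}_{n+1}^{2}$ (using $t_{n}+\sigma^{-1}t_{n}^{\sigma}\le 2\sigma^{-1}\widehat{t}_{n+1}^{2}$), which is the first asserted bound. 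The graded-mesh estimate then follows by inserting \textbf{AssG}: $\tau_{1}=t_{1}\le C_{\gamma}\tau\min(1,t_{1}^{1-1/\gamma})$ forces $\tau_{1}\le C\tau^{\gamma}$, hence $\tau_{1}^{\sigma}\le C\tau^{\gamma\sigma}$; and $\tau_{k+1}\le C\tau\,t_{k}^{1-1/\gamma}$ with $t_{k+1}\le C_{\gamma}t_{k}$ turns $t_{k+1}^{\alpha}t_{k}^{\sigma-2}\tau_{k+1}^{2-\alpha}$ into $C\tau^{2-\alpha}t_{k}^{\sigma-(2-\alpha)/\gamma}$, whose maximum over $1\le k\le n$ is $O(\tau^{2-\alpha})$ when $\gamma\sigma\ge2-\alpha$ and, since $t_{k}$ is increasing, is attained at $k=1$ with value $O(\tau^{\gamma\sigma})$ when $\gamma\sigma<2-\alpha$; the SOE term is already $\le C\epsilon$ on $[0,T]$.

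I expect the main obstacle to be the mesh-uniform treatment of the weak initial layer: extracting a clean $\tau_{1}^{\sigma}/\sigma$ and, more delicately, obtaining the $\max_{k}$ form of the interior term (rather than a crude $\sum_{k}$ that would lose accuracy on graded meshes) requires pairing the $s^{\sigma-1}$ singularity of $\phi'$ on $[0,t_{1}]$ and the $t_{k}^{\sigma-2}$ growth of $\phi''$ against the singular L1 weights with the correct bookkeeping, estimating the Beta-type integrals, and checking that the DCC-weighted summation over all $j\le n$ leaves no surviving positive power of $t_{n+1}$ in the first two terms. The remaining ingredients---the interval-by-interval interpolation estimates, the exchange of summation order, and the \textbf{AssG} substitution---are routine once Lemma~\ref{lem3}, Lemma~\ref{lem1}, and the DCC normalization identities are in hand.
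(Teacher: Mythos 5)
Your proposal is correct and follows exactly the route the paper intends: the paper itself gives no proof of this lemma, deferring entirely to Lemmas 3.3--3.4 of \cite{LYZ19}, and your decomposition of $\overrightarrow{R}^{n+1}_{\alpha}$ into the standard L1 interpolation error plus the SOE kernel-replacement error, the interval-by-interval treatment of the weak initial singularity, and the DCC-weighted summation via Lemma \ref{lem3} and Lemma \ref{lem1} (with $\pi_{A}=3/2$) are precisely the argument of that reference. The graded-mesh bookkeeping ($\tau_{1}\leq C\tau^{\gamma}$ and the exponent $\sigma-(2-\alpha)/\gamma$ governing where the interior maximum is attained) is also handled correctly.
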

\begin{lemma}\label{lemm5}
Under the assumption of Lemma \ref{lem4}, we assume the nonlinear term $f_{\kappa}(\cdot)\in C^{1}(\mathbb{R})$, then it holds for $0\leq n\leq N-1$ that
\begin{align*}
\sum_{j=0}^{n}\widehat{p}^{n}_{n-j}\frac{|R^{j+1}_{f_{\kappa}}|}{A^{(j)}_{0}}\leq C_{\phi}\tau^{2\alpha}_{1}(\tau_{1}+\frac{\tau^{\sigma}_{1}}{\sigma})+C_{\phi}t_{n+1}^{\alpha}\max_{1\leq k\leq n}(\tau^{1
+\alpha}_{k+1}+t_{k}^{\sigma-1}\tau^{1+\alpha}_{k+1}).
\end{align*}
In addition, if the temporal mesh satisfies the AssG, we have
\begin{align*}
\sum_{j=0}^{n}\widehat{p}^{n}_{n-j}\frac{|R^{j+1}_{f_{\kappa}}|}{A^{(j)}_{0}}\leq C_{\phi}\tau^{\min(1+\alpha,\gamma\sigma)},\quad 0\leq n\leq N-1.
\end{align*}
\end{lemma}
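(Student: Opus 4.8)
The plan is to reduce $R^{n+1}_{f_{\kappa}}$ to the size of the one-step increment $\phi(t_{n+1})-\phi(t_{n})$ and then redistribute the DCC weights. Since $f_{\kappa}\in C^{1}(\mathbb{R})$ and the exact solution $\phi$ stays in $[-1,1]$ (the continuous MBP; equivalently $\phi$ is bounded on $[0,T]$ because $\phi'$ is integrable), $f_{\kappa}$ is Lipschitz on the range of $\phi$, say with constant $L:=\max_{|\rho|\le 1}|f_{\kappa}'(\rho)|$, so $|R^{j+1}_{f_{\kappa}}|\le L\,|\phi(t_{j+1})-\phi(t_{j})|\le L\int_{t_{j}}^{t_{j+1}}\|\phi'(s)\|_{W^{0,\infty}(\Omega)}\,\d s$. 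Plugging in $\|\phi'(s)\|_{W^{0,\infty}(\Omega)}\le C_{\phi}(1+s^{\sigma-1})$ and using that $s\mapsto s^{\sigma-1}$ is decreasing (as $\sigma<1$) gives $|R^{1}_{f_{\kappa}}|\le C_{\phi}(\tau_{1}+\tau_{1}^{\sigma}/\sigma)$ for the first step and $|R^{j+1}_{f_{\kappa}}|\le C_{\phi}\,\tau_{j+1}(1+t_{j}^{\sigma-1})$ for $j\ge 1$.

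Next I would divide by $A^{(j)}_{0}$: from \eqref{kernel}, $A^{(j)}_{0}=a^{(j)}_{0}=\tau_{j+1}^{-\alpha}/\Gamma(2-\alpha)$, hence $1/A^{(j)}_{0}=\Gamma(2-\alpha)\,\tau_{j+1}^{\alpha}$, so $|R^{1}_{f_{\kappa}}|/A^{(0)}_{0}\le C_{\phi}\,\tau_{1}^{\alpha}(\tau_{1}+\tau_{1}^{\sigma}/\sigma)$ and $|R^{j+1}_{f_{\kappa}}|/A^{(j)}_{0}\le C_{\phi}\,\tau_{j+1}^{1+\alpha}(1+t_{j}^{\sigma-1})$ for $j\ge 1$. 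I then split the sum $\sum_{j=0}^{n}\widehat{p}^{n}_{n-j}(\,\cdot\,)$ into the term $j=0$ and the block $j\ge 1$, and invoke two facts about the DCC kernels of the fast kernels $\{A^{(n)}_{k}\}$ (which satisfy $A1$--$A2$ by Lemma~\ref{lem3}, since $A^{(n)}_{n-k}\ge\frac{2}{3}a^{(n)}_{n-k}$ and $A^{(n)}_{0}=a^{(n)}_{0}$): the pointwise bound $\widehat{p}^{n}_{n}\le C\,\tau_{1}^{\alpha}$ from Lemma~\ref{lem1} (the first weight being controlled by $1/A^{(0)}_{0}=\Gamma(2-\alpha)\tau_{1}^{\alpha}$), and the total-mass bound $\sum_{j=0}^{n}\widehat{p}^{n}_{n-j}\le C\,t_{n+1}^{\alpha}$, obtained from Lemma~\ref{lem2} by letting $\mu\to 0^{+}$. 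Then the $j=0$ term is $\le\widehat{p}^{n}_{n}\cdot C_{\phi}\tau_{1}^{\alpha}(\tau_{1}+\tau_{1}^{\sigma}/\sigma)\le C_{\phi}\,\tau_{1}^{2\alpha}(\tau_{1}+\tau_{1}^{\sigma}/\sigma)$, while $\sum_{j=1}^{n}\widehat{p}^{n}_{n-j}\,|R^{j+1}_{f_{\kappa}}|/A^{(j)}_{0}\le\big(\sum_{j=1}^{n}\widehat{p}^{n}_{n-j}\big)\max_{1\le k\le n}C_{\phi}\tau_{k+1}^{1+\alpha}(1+t_{k}^{\sigma-1})\le C_{\phi}\,t_{n+1}^{\alpha}\max_{1\le k\le n}\big(\tau_{k+1}^{1+\alpha}+t_{k}^{\sigma-1}\tau_{k+1}^{1+\alpha}\big)$, which is exactly the first asserted bound.

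For the grid-robust estimate under \textbf{AssG}, I would insert $\tau_{k+1}\le C_{\gamma}\tau\min(1,t_{k+1}^{1-1/\gamma})$ and $t_{k+1}\le C_{\gamma}t_{k}$ (so $t_{k}^{\sigma-1}\le C\,t_{k+1}^{\sigma-1}$), together with $t_{n+1}^{\alpha}\le T^{\alpha}$, into the bound just derived. Splitting $\max_{1\le k\le n}$ into the regime bounded away from the origin, where $t_{k}^{\sigma-1}$ is harmless and the contribution is $O(\tau^{1+\alpha})$, and the regime near the origin, where $\tau_{k+1}\le C_{\gamma}\tau\,t_{k+1}^{1-1/\gamma}$ and the grading exponent $1-1/\gamma$ absorbs the blow-up of $t_{k}^{\sigma-1}$, producing $O(\tau^{\gamma(\alpha+\sigma)})\le O(\tau^{\gamma\sigma})$, and noting that the first-step contribution $\tau_{1}^{2\alpha}(\tau_{1}+\tau_{1}^{\sigma}/\sigma)=O(\tau^{\gamma(1+2\alpha)})+O(\tau^{\gamma(2\alpha+\sigma)})$ is dominated by $\tau^{\min(1+\alpha,\gamma\sigma)}$, one arrives at $\le C_{\phi}\,\tau^{\min(1+\alpha,\gamma\sigma)}$. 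This is the same manipulation used for $R_{\alpha}$ in the second part of Lemma~\ref{lem4} and in \cite{LYZ19}, here applied to the increment estimate.

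The one genuinely delicate step is this graded-mesh computation near $t=0$: one must check that the grading exponent $1-1/\gamma$ precisely offsets the singularity of $\|\phi'\|_{W^{0,\infty}(\Omega)}$, so that a positive power of $\tau$ (namely $\gamma\sigma$) survives rather than a negative one. Everything else is routine --- the Lipschitz reduction, the elementary integral $\int_{t_{j}}^{t_{j+1}}(1+s^{\sigma-1})\,\d s$, the closed form of $A^{(j)}_{0}$, and the DCC-kernel bounds, the last inherited from the exact L1 kernels via $A^{(n)}_{n-k}\ge\frac{2}{3}a^{(n)}_{n-k}$ so that Lemmas~\ref{lem1} and~\ref{lem2} apply to $\{A^{(n)}_{k}\}$.
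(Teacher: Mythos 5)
Your argument is correct and is essentially the paper's own proof: the paper simply defers to the process of Lemma~3.4 in \cite{LYZ19}, and what you have written out (Lipschitz reduction of $R^{j+1}_{f_\kappa}$ to $\int_{t_j}^{t_{j+1}}\|\phi'\|\,\mathrm{d}s$, the identity $1/A^{(j)}_0=\Gamma(2-\alpha)\tau_{j+1}^{\alpha}$, the DCC bounds $\widehat{p}^{\,n}_{n}\le C\tau_1^{\alpha}$ and $\sum_j\widehat{p}^{\,n}_{n-j}\le Ct_{n+1}^{\alpha}$, then AssG) is exactly that process. The only soft spot is the graded-mesh step near the origin, where you invoke $t_k\gtrsim\tau^{\gamma}$, which is not literally implied by AssG (it only gives upper bounds); the standard fix is to interpolate $\tau_{k+1}^{1+\alpha}\le\big(C_\gamma\tau t_{k+1}^{1-1/\gamma}\big)^{\beta}\,t_{k+1}^{1+\alpha-\beta}$ with $\beta=\min\{1+\alpha,\gamma(\alpha+\sigma)\}$, which absorbs $t_k^{\sigma-1}$ without any lower bound on $t_1$ and yields the same $C_\phi\tau^{\min(1+\alpha,\gamma\sigma)}$.
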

\begin{proof}
Since the proof can be easily obtained by following a process similar to that used in the proof of Lemma 3.4 in \cite{LYZ19}, we omit the details here and leave them to interested readers.
\end{proof}

Now we are ready to derive error estimate for the proposed fast linear predictor-corrector stabilized L1 scheme \eqref{fL1_h}. Define the errors  $\vec{e}^{n}=\vPhi^{n}-\vPhi(t_{n})$ and $\vec{e}^{*,n}=\vPhi^{*,n}-\vPhi(t_{n})$.
We have following convergence estimate for the scheme \eqref{fL1_h}.
\begin{theorem}\label{Thm1}
Under the assumption of Theorem \ref{them1} and Lemma \ref{lemm5}, and the A3 holds, then it holds for the proposed scheme \eqref{fL1_h} that
\begin{equation}\label{ex1}
\begin{aligned}
\|\vec{e}^{n+1}\|_{\infty}\leq& C_{\phi}\Big(\tau^{1+2\alpha}_{1}+\frac{\tau^{\sigma}_{1}}{\sigma}+t_{n+1}^{\alpha}\big(\max_{1\leq k\leq n}(\tau^{1
+\alpha}_{k+1}+t_{k}^{\sigma-1}\tau^{1+\alpha}_{k+1})\\
&+\max_{1\leq k\leq n}t^{\sigma-2}_{k}\tau^{2-\alpha}_{k+1}\big)+\frac{\epsilon}{\sigma}t^{\alpha}_{n+1}\widehat{t}^{2}_{n+1}+h^{2}\Big), \quad 0\leq n\leq N-1.
\end{aligned}
\end{equation}
Moreover, when the temporal mesh satisfies AssG, we have
\begin{align}\label{ex2}
\|\vec{e}^{n+1}\|_{\infty}\leq C_{\phi}\big(\tau^{\min\{1+\alpha,2-\alpha,\gamma\sigma\}}+\epsilon+h^{2}\big), \quad 0\leq n\leq N-1.
\end{align}
\end{theorem}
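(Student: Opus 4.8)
The plan is to reduce the error bounds \eqref{ex1}--\eqref{ex2} to a single fractional Gr\"onwall inequality of the form \eqref{equ1} for the scalar sequence $\psi^{n}:=\|\vec{e}^{n}\|_{\infty}$ with discrete kernels $B^{(n)}_{k}=A^{(n)}_{k}$, and then to invoke Theorem \ref{thm1}. Two facts make this feasible. First, by the discrete MBP of Theorem \ref{them1} and the continuous MBP, $\vPhi^{n}$, $\vPhi^{*,n+1}$ and the sampled exact solution $\vPhi(t_{n})$ all take values in $[-1,1]$, so under \eqref{kappa_cd} the potential $f_{\kappa}$ is globally Lipschitz, with a fixed constant $L_{f}:=\kappa-\min_{|\rho|\le1}f'(\rho)$, along every argument that appears below. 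Second, $\varepsilon^{2}D_{h}$ has negative diagonal entries and is weakly diagonally dominant, so Lemma \ref{lemm2} gives $\|((A^{(n)}_{0}+\kappa)I-\varepsilon^{2}D_{h})\vec{v}\|_{\infty}\ge(A^{(n)}_{0}+\kappa)\|\vec{v}\|_{\infty}$ for all $\vec{v}$.

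First I would write the error equations. Restricting \eqref{TFAC1} to the spatial grid introduces, besides the residuals $\overrightarrow{R}^{n+1}_{\alpha}$ and $\overrightarrow{R}^{n+1}_{f_{\kappa}}$ of \eqref{truerrs}, a spatial truncation error $\overrightarrow{R}^{n+1}_{h}$ with $\|\overrightarrow{R}^{n+1}_{h}\|_{\infty}\le C_{\phi}h^{2}$ under the assumed spatial regularity. Subtracting \eqref{fL1_th1} and \eqref{fL1_th2} from the corresponding exact identities, using $\vec{e}^{0}=0$ and the fact that the SOE history in $\mathcal{L}^{\alpha}_{t}$ is built only from the corrector iterates, one finds that $\mathcal{L}^{\alpha}_{t}\vec{e}^{n+1}=A^{(n)}_{0}\vec{e}^{n+1}-\sum_{k=1}^{n}(A^{(n)}_{n-k}-A^{(n)}_{n-k+1})\vec{e}^{k}$, with the same expression for $\mathcal{L}^{\alpha}_{t}\vec{e}^{*,n+1}$ but $\vec{e}^{*,n+1}$ in the leading slot. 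Taking $\|\cdot\|_{\infty}$, Lemma \ref{lemm2}, the $L_{f}$-Lipschitz bound for the nonlinear differences, and the kernel monotonicity of Lemma \ref{lem3}(i), I obtain the predictor and corrector bounds
\begin{align*}
(A^{(n)}_{0}+\kappa)\|\vec{e}^{*,n+1}\|_{\infty}&\le\sum_{k=1}^{n}(A^{(n)}_{n-k}-A^{(n)}_{n-k+1})\|\vec{e}^{k}\|_{\infty}+L_{f}\|\vec{e}^{n}\|_{\infty}+\widetilde{g}^{n+1},\\
(A^{(n)}_{0}+\kappa)\|\vec{e}^{n+1}\|_{\infty}&\le\sum_{k=1}^{n}(A^{(n)}_{n-k}-A^{(n)}_{n-k+1})\|\vec{e}^{k}\|_{\infty}+L_{f}\|\vec{e}^{*,n+1}\|_{\infty}\\
&\quad+\|\overrightarrow{R}^{n+1}_{\alpha}\|_{\infty}+\|\overrightarrow{R}^{n+1}_{h}\|_{\infty},
\end{align*}
where $\widetilde{g}^{n+1}:=\|\overrightarrow{R}^{n+1}_{\alpha}\|_{\infty}+\|\overrightarrow{R}^{n+1}_{f_{\kappa}}\|_{\infty}+\|\overrightarrow{R}^{n+1}_{h}\|_{\infty}$.

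Then I would assemble the Gr\"onwall inequality. In the corrector bound, move the convolution sum to the left, where (since $\vec{e}^{0}=0$) it equals $\mathcal{L}^{\alpha}_{t}\|\vec{e}^{n+1}\|_{\infty}+\kappa\|\vec{e}^{n+1}\|_{\infty}$ with $\mathcal{L}^{\alpha}_{t}$ now acting on the scalar sequence $\{\|\vec{e}^{k}\|_{\infty}\}$; dropping the nonnegative $\kappa$-term and inserting the predictor bound divided by $A^{(n)}_{0}+\kappa\ge\kappa$ for $L_{f}\|\vec{e}^{*,n+1}\|_{\infty}$, I obtain
\begin{equation*}
\mathcal{L}^{\alpha}_{t}\|\vec{e}^{n+1}\|_{\infty}\le\sum_{k=0}^{n}\lambda^{(n)}_{n-k}\|\vec{e}^{k}\|_{\infty}+g^{n+1},
\end{equation*}
with $\lambda^{(n)}_{n-k}=\frac{L_{f}}{A^{(n)}_{0}+\kappa}(A^{(n)}_{n-k}-A^{(n)}_{n-k+1})\ge0$ for $1\le k\le n-1$, $\lambda^{(n)}_{0}=\frac{L_{f}}{A^{(n)}_{0}+\kappa}(A^{(n)}_{0}-A^{(n)}_{1}+L_{f})$, and $g^{n+1}\le C(\|\overrightarrow{R}^{n+1}_{\alpha}\|_{\infty}+\|\overrightarrow{R}^{n+1}_{h}\|_{\infty}+(A^{(n)}_{0})^{-1}\|\overrightarrow{R}^{n+1}_{f_{\kappa}}\|_{\infty})$. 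The decisive point is the \emph{uniform} row bound $\sum_{l}\lambda^{(n)}_{l}\le\frac{L_{f}(A^{(n)}_{0}+L_{f})}{A^{(n)}_{0}+\kappa}\le L_{f}\max(1,L_{f}/\kappa)=:\Lambda$, valid for every $n$ precisely because $\kappa>0$ is a fixed stabilization constant; since the proof of Theorem \ref{thm1} uses only such a row bound, the mild $n$-dependence of $\lambda^{(n)}$ is immaterial. With $B^{(n)}=A^{(n)}$, assumption A1 is Lemma \ref{lem3}(i); A2 with $\pi_{B}=3/2$ follows from Lemma \ref{lem3}(ii) together with $a^{(n)}_{n-k}=\frac{1}{\tau_{k+1}}\int_{t_{k}}^{t_{k+1}}\omega_{1-\alpha}(t_{n+1}-s)\,ds$; A3 is assumed.

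Finally, Theorem \ref{thm1} with $\psi^{0}=0$ gives $\|\vec{e}^{n+1}\|_{\infty}\le E_{\alpha}(\max(1,\rho)\pi_{B}\Lambda T^{\alpha})\max_{0\le k\le n}\sum_{j=0}^{k}\widehat{p}^{(k)}_{k-j}g^{j+1}$. Splitting $g^{j+1}$ into its three pieces, I would bound $\sum_{j}\widehat{p}^{(k)}_{k-j}\|\overrightarrow{R}^{j+1}_{\alpha}\|_{\infty}$ by Lemma \ref{lem4}, $\sum_{j}\widehat{p}^{(k)}_{k-j}(A^{(j)}_{0})^{-1}\|\overrightarrow{R}^{j+1}_{f_{\kappa}}\|_{\infty}$ by Lemma \ref{lemm5}, and $\sum_{j}\widehat{p}^{(k)}_{k-j}\|\overrightarrow{R}^{j+1}_{h}\|_{\infty}$ by $C_{\phi}h^{2}$ (using $\|\overrightarrow{R}^{j+1}_{h}\|_{\infty}\le C_{\phi}h^{2}$ and $\sum_{j}\widehat{p}^{(k)}_{k-j}\le C$); taking the maximum over $k\le n$ and using $t_{k+1}\le T$ then produces \eqref{ex1}, and the AssG refinements in Lemmas \ref{lem4}--\ref{lemm5} together with $t_{n+1}^{\alpha}\widehat{t}_{n+1}^{2}\le C$ produce \eqref{ex2}. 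I expect the main obstacle to be this assembly step: the predictor--corrector structure must be exploited so that one copy of $\sum_{k}(A^{(n)}_{n-k}-A^{(n)}_{n-k+1})\|\vec{e}^{k}\|_{\infty}$ is absorbed (from the corrector) into $\mathcal{L}^{\alpha}_{t}\|\vec{e}^{n+1}\|_{\infty}$ while the other copy survives (from the predictor) only with the sub-unit weight $L_{f}/(A^{(n)}_{0}+\kappa)$ — which is exactly what keeps the row sums bounded and thereby lets the $\theta=1$ Gr\"onwall inequality of Theorem \ref{thm1}, rather than the step-size-restrictive $\theta<1$ version of \cite{LMZ19}, close the argument.
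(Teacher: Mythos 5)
Your proposal is correct and follows essentially the same route as the paper: write the grid-restricted exact equation with residuals $\overrightarrow{R}^{n+1}_{\alpha}$, $\overrightarrow{R}^{n+1}_{f_{\kappa}}$ and the $O(h^{2})$ spatial residual, take $\|\cdot\|_{\infty}$ of the predictor and corrector error equations via Lemma \ref{lemm2} and the MBP-based Lipschitz bound on $f_{\kappa}$, substitute the predictor bound (divided by $A^{(n)}_{0}$, or $A^{(n)}_{0}+\kappa$ in your variant) into the corrector bound so that one copy of the convolution sum is absorbed and the other carries a sub-unit weight yielding a uniform row bound $\Lambda$, and close with the $\theta=1$ fractional Gr\"onwall inequality of Theorem \ref{thm1} together with Lemmas \ref{lem4} and \ref{lemm5}. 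The only differences from the paper are cosmetic (your $\Lambda=L_{f}\max(1,L_{f}/\kappa)$ versus the paper's $\max_{\rho}|f'_{\kappa}(\rho)|(1+\Gamma(2-\alpha)\tau^{\alpha}\max_{\rho}|f'_{\kappa}(\rho)|)$), and both serve the same purpose.
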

\begin{proof}
The exact solution $\vPhi(\cdot)$ satisfies
\brr\label{equa2}
&\dps \sum_{j=0}^{n} A^{(n)}_{n-j}(\vPhi(t_{j+1})-\vPhi(t_{j}))-\varepsilon^{2}D_{h}\vPhi(t_{n+1})+\kappa\vPhi(t_{n+1})+f_{\kappa}(\vPhi(t_{n+1}))\\
&\quad\dps+\overrightarrow{R}^{n+1}_{\alpha}+\overrightarrow{R}^{n+1}_{\Delta}=0
\err
for any $1\leq n\leq N-1$, where  $\overrightarrow{R}^{n+1}_{\alpha}$ is defined in \eqref{truerrs} and $\overrightarrow{R}^{n+1}_{\Delta}$  is expressed as
\beq
 \overrightarrow{R}^{n+1}_{\Delta}\dps=-\varepsilon^2\Delta \vPhi(t_{n+1})+\varepsilon^2 D_{h}\vPhi(t_{n+1}).
\eeq
Moreover, it is straightforward to verify that
\begin{align}\label{eqnn5}
\|\overrightarrow{R}^{n+1}_{\Delta}\|_{\infty}\dps\leq  \varepsilon^{2}\frac{h^{2}}{6}\|\phi\|^{2}_{L^{\infty}(0,T;W^{4,\infty}(\Omega))}.
\end{align}
Combining \eqref{equa2} with \eqref{fL1_th2}, the error equation of $\ve^{n+1}$ reads as
\begin{equation}\label{equa5}
\begin{aligned}
&\dps A^{(n)}_{0}\ve^{n+1}-\sum_{k=1}^{n}(A^{n}_{n-k}-A^{(n)}_{n-k+1})\ve^{k}-A^{(n)}_{n}\ve^{0}+\kappa\ve^{n+1}-\varepsilon^{2}D_{h}\ve^{n+1} \\
&+f_{\kappa}(\vPhi^{*,n+1})-f_{\kappa}(\vPhi(t_{n+1}))= \overrightarrow{R}^{n+1}_{\alpha}+\overrightarrow{R}^{n+1}_{\Delta}.
\end{aligned}
\end{equation}
Then, the above equality can be rewritten as:
\begin{align*}
&\dps A^{(n)}_{0}\ve^{n+1}+\kappa\ve^{n+1}-\varepsilon^{2}D_{h}\ve^{n+1} \\
= &\sum_{k=0}^{n}(A^{n}_{n-k}-A^{(n)}_{n-k+1})\ve^{k}+f_{\kappa}(\vPhi(t_{n+1}))-f_{\kappa}(\vPhi^{*,n+1})+\overrightarrow{R}^{n+1}_{\alpha}+\overrightarrow{R}^{n+1}_{\Delta}.
\end{align*}
where we set $A^{(n)}_{n+1}=0$.
Furthermore, we deduce from Lemma \ref{lemm2} amd Lemma \ref{lem3} that
\begin{align*}
&A^{(n)}_{0}\|\ve^{n+1}\|_{\infty}\\
\leq&  \Big\|A^{(n)}_{0}\ve^{n+1}+\kappa\ve^{n+1}-\varepsilon^{2}D_{h}\ve^{n+1} \Big\|_{\infty}\\
=&  \|\sum_{k=0}^{n}(A^{n}_{n-k}-A^{(n)}_{n-k+1})\ve^{k}+f_{\kappa}(\vPhi(t_{n+1}))-f_{\kappa}(\vPhi^{*,n+1})+\overrightarrow{R}^{n+1}_{\alpha}+\overrightarrow{R}^{n+1}_{\Delta}\|_{\infty}\\
\leq& \sum_{k=0}^{n}(A^{n}_{n-k}-A^{(n)}_{n-k+1})\|\ve^{k}\|_{\infty}+\max_{\rho\in[-1,1]}|f'_{\kappa}(\rho)|\|\ve^{*,n+1})\|_{\infty}+\|\overrightarrow{R}^{n+1}_{\alpha}\|_{\infty}+\|\overrightarrow{R}^{n+1}_{\Delta}\|_{\infty}.
\end{align*}
From the above inequality, it follows that
\begin{align}\label{equa4}
&\sum_{k=0}^{n}A^{(n)}_{n-k}(\|\ve^{k+1}\|_{\infty}-\|\ve^{k}\|_{\infty})\nonumber\\
&\leq \max_{\rho\in[-1,1]}|f'_{\kappa}(\rho)|\|\ve^{*,n+1})\|_{\infty}+\|\overrightarrow{R}^{n+1}_{\alpha}\|_{\infty}+\|\overrightarrow{R}^{n+1}_{\Delta}\|_{\infty}.
\end{align}
Following the similar process of deriving \eqref{equa5}, we can easily obtain the error equation of $ \ve^{*,n+1}$ from \eqref{equa2} and \eqref{fL1_th1}:
\begin{equation}\label{eqn1_8}
\begin{aligned}
&\dps A^{(n)}_{0}\ve^{*,n+1}-\sum_{k=1}^{n}(A^{n}_{n-k}-A^{(n)}_{n-k+1})\ve^{k}-A^{(n)}_{n}\ve^{0}+\kappa\ve^{n+1}-\varepsilon^{2}D_{h}\ve^{*,n+1} \nonumber\\
&+f_{\kappa}(\vPhi^{n})-f_{\kappa}(\vPhi(t_{n+1}))
= \overrightarrow{R}^{n+1}_{\alpha}+\overrightarrow{R}^{n+1}_{\Delta}.
\end{aligned}
\end{equation}
Multiplying \eqref{eqn1_8} with $1/A^{(n)}_{0}$, and using Lemma \ref{lemm2} amd Lemma \ref{lem3}, gives
 \begin{align*}
&\| \ve^{*,n+1}\|_{\infty}\leq \dps\big\| \ve^{*,n+1}+\frac{\kappa}{A^{(n)}_{0}}\ve^{*,n+1}-\frac{\varepsilon^{2}}{A^{(n)}_{0}}\Dt_{n+1}D_{h} \ve^{*,n+1}\big\|_{\infty}\\
 &=\frac{1}{A^{(n)}_{0}}  \Big\|\sum_{k=0}^{n}(A^{(n)}_{n-k}-A^{(n)}_{n-k+1})\ve^{k}+f_{\kappa}(\vPhi(t_{n+1}))-f_{\kappa}(\vPhi^{n})+\overrightarrow{R}^{n+1}_{\alpha}+\overrightarrow{R}^{n+1}_{\Delta}\Big\|_{\infty}\\
&\leq\frac{1}{A^{(n)}_{0}}\Big[\sum_{k=0}^{n}(A^{(n)}_{n-k}-A^{(n)}_{n-k+1})\|\ve^{k}\|_{\infty}+\|\overrightarrow{R}^{n+1}_{f_{\kappa}}\|_{\infty}+\max_{\rho\in[-1,1]}|f'_{\kappa}(\rho)|\|\ve^{n}\|_{\infty}\\
&~~+\|\overrightarrow{R}^{n+1}_{\alpha}\|_{\infty}+\|\overrightarrow{R}^{n+1}_{\Delta}\|_{\infty}\Big].
\end{align*}
Then, we can deduce from the above error inequality and \eqref{equa4} that
\begin{align*}
&\sum_{k=0}^{n}A^{(n)}_{n-k}(\|\ve^{k+1}\|_{\infty}-\|\ve^{k}\|_{\infty})\\
\leq& \frac{\max_{\rho\in[-1,1]}|f'_{\kappa}(\rho)|}{A^{(n)}_{0}}\Big[\sum_{k=0}^{n}(A^{(n)}_{n-k}-A^{(n)}_{n-k+1})\|\ve^{k}\|_{\infty}+\max_{\rho\in[-1,1]}|f'_{\kappa}(\rho)|\|\ve^{n}\|_{\infty}+\overrightarrow{R}^{n+1}_{f_{\kappa}}\Big]\\
&+\Big(1+\frac{\max_{\rho\in[-1,1]}|f'_{\kappa}(\rho)|}{A^{(n)}_{0}}\Big)(\|\overrightarrow{R}^{n+1}_{\alpha}\|_{\infty}+\|\overrightarrow{R}^{n+1}_{\Delta}\|_{\infty}).
\end{align*}
We use the identity $A^{(n)}_{0}=a^{(n)}_{0}=1/(\Gamma(2-\alpha)\tau_{n+1}^{\alpha})$ to obtain that
\begin{align*}
&\frac{\max_{\rho\in[-1,1]}|f'_{\kappa}(\rho)|}{A^{(n)}_{0}}\Big[\sum_{k=0}^{n}(A^{(n)}_{n-k}-A^{(n)}_{n-k+1})+\max_{\rho\in[-1,1]}|f'_{\kappa}(\rho)|\Big]\\
=&\frac{\max_{\rho\in[-1,1]}|f'_{\kappa}(\rho)|}{A^{(n)}_{0}}[A^{(n)}_{0}+\max_{\rho\in[-1,1]}|f'_{\kappa}(\rho)|]\\
\leq&\max_{\rho\in[-1,1]}|f'_{\kappa}(\rho)|(1+\Gamma(2-\alpha)\tau^{\alpha}\max_{\rho\in[-1,1]}|f'_{\kappa}(\rho)|)\\
:=&\Lambda
\end{align*}
for any $0\leq n\leq N-1.$
Then, it follows from Theorem \ref{thm1} that
\begin{align*}
\|e^{n+1}\|_{\infty}\leq &E_{\alpha}(\max(1,\rho)\pi_{B}\Lambda t^{\alpha}_{n+1})\Big[\|e^{0}\|_{0}+\max_{\rho\in[-1,1]}|f'_{\kappa}(\rho)|\max_{0\leq k\leq n}\sum_{j=0}^{k}\widetilde{p}^{(k)}_{k-j}\frac{\overrightarrow{R}^{j+1}_{f_{\kappa}}}{A^{(j)}_{0}}\\
&+\big(1+\Gamma(2-\alpha)\tau^{\alpha}\max_{\rho\in[-1,1]}|f'_{\kappa}(\rho)|\big)\big(\max_{0\leq k\leq n}\sum_{j=0}^{k}\widetilde{p}^{(k)}_{k-j}\|\overrightarrow{R}^{j+1}_{\alpha}\|_{\infty}\\
&+\max_{0\leq k\leq n}\sum_{j=0}^{k}\widetilde{p}^{(k)}_{k-j}\|\overrightarrow{R}^{j+1}_{\Delta}\|_{\infty}\big)\Big].
\end{align*}
Together with Lemma \ref{lem4}, Lemma \ref{lemm5}, \eqref{eqnn5} and $\ve^{0}=\bf{0}$, the desired estimates \eqref{ex1} and \eqref{ex2} can be derived, which completes the proof.
 \end{proof}

\section{Numerical results}
\label{sect4}
\setcounter{equation}{0}
In this section, we present some numerical experiments to validate the theoretical results of the proposed numerical scheme in terms of accuracy and the preservation of the discrete MBP and the discrete variation energy dissipation law.
 Throughout the numerical experiments, the central finite difference method is exploited for the spatial discretization, the fast SOE approach is used for discretization of the time-fractional derivative, and the stabilizing parameter is set to be $\kappa=2$ satisfying the condition \eqref{kappa_cd}.
\subsection{Temporal convergence}
\begin{figure}[!t]\vskip3mm
\centering
\includegraphics[scale=0.3]{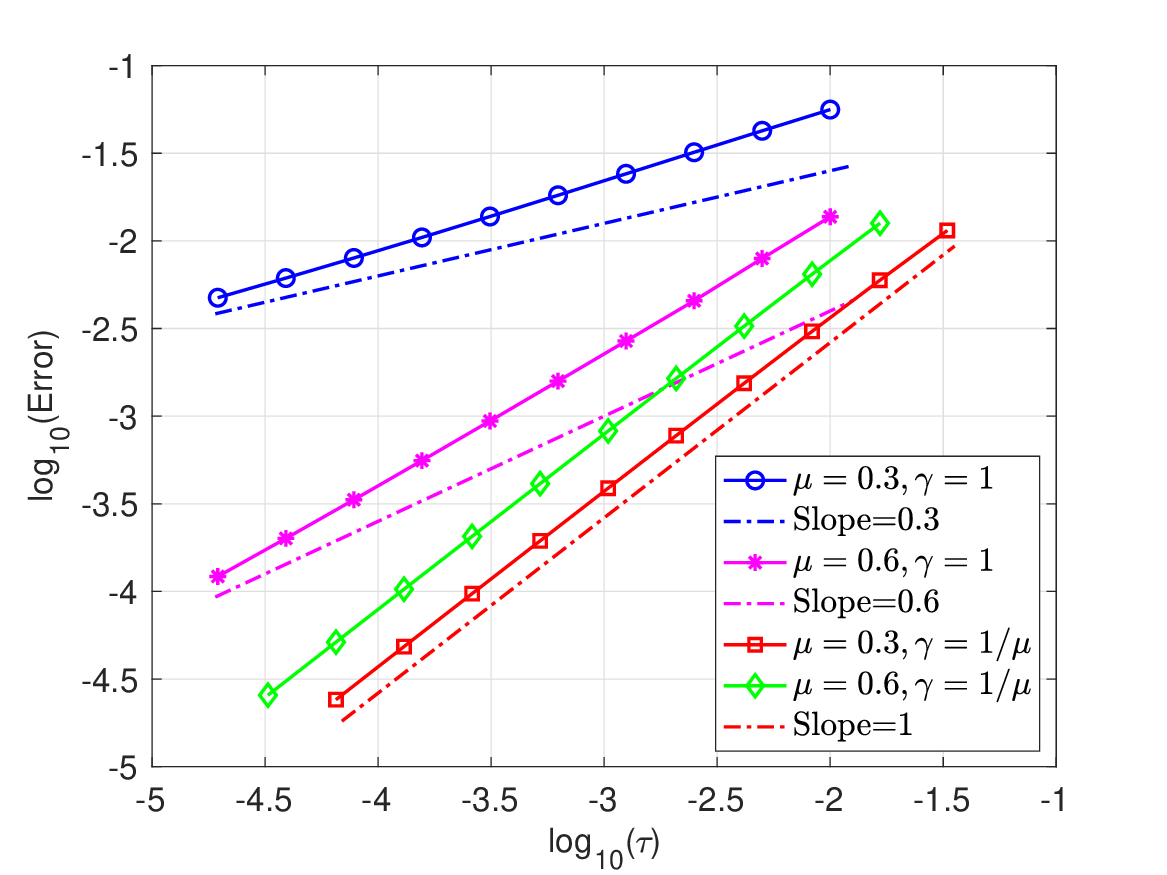}\includegraphics[scale=0.3]{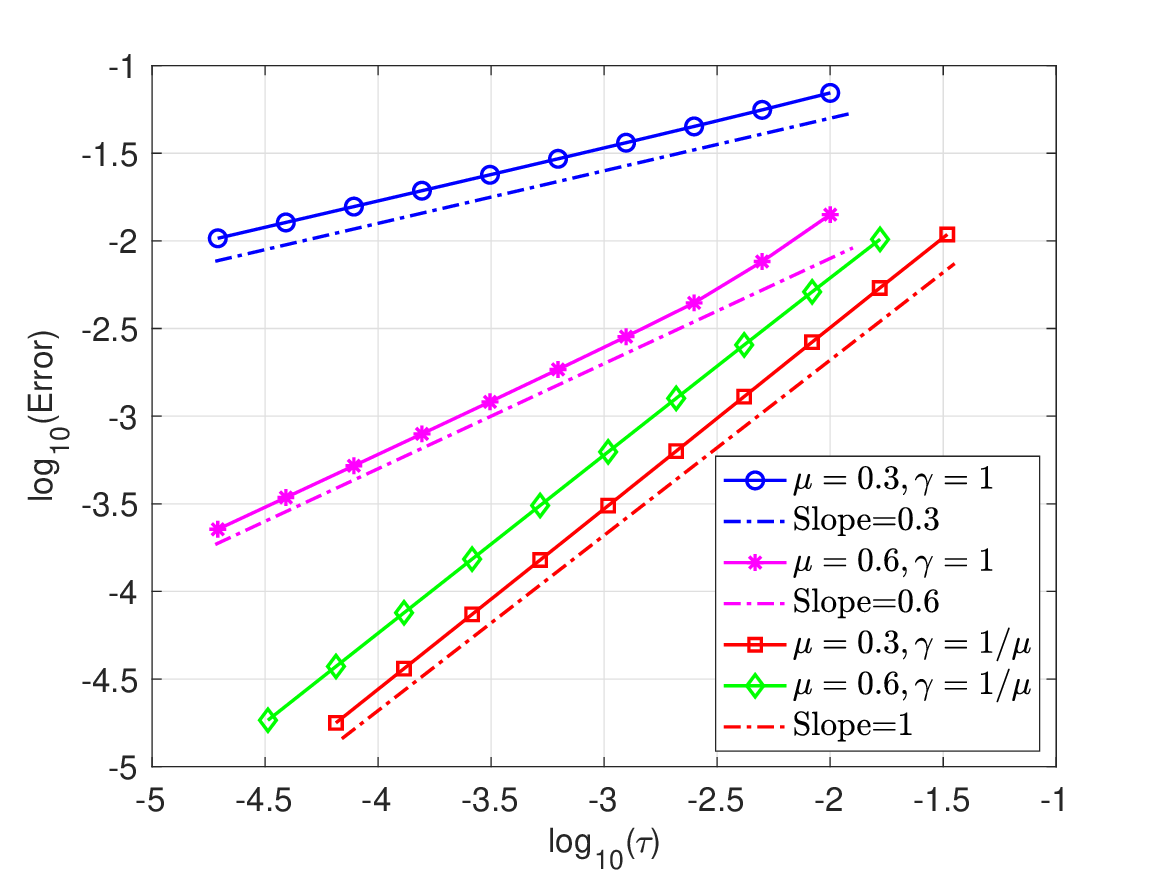}\\
\centering
{(a)  first order scheme \eqref{scheme1}. Left: $\alpha=0.3$; right: $\alpha=0.8$.}\\
\centering
\includegraphics[scale=0.3]{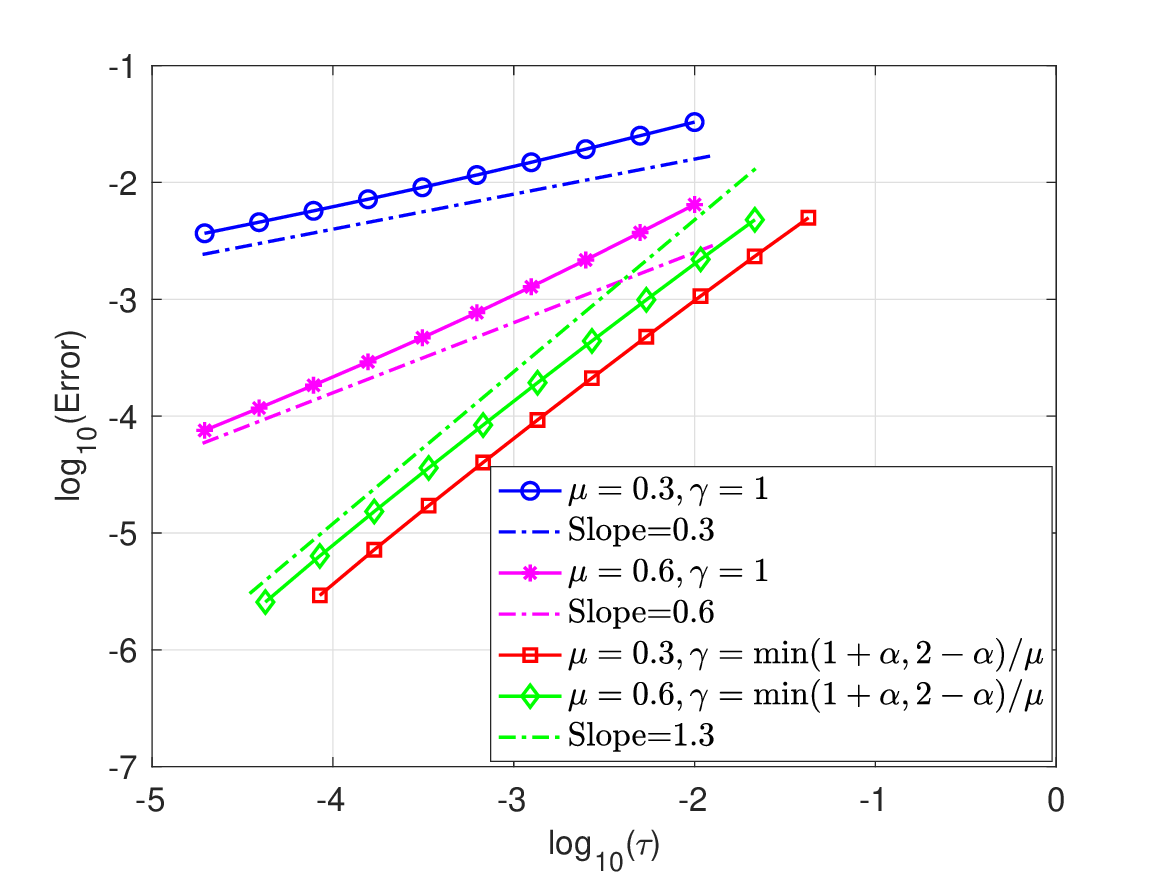}\includegraphics[scale=0.3]{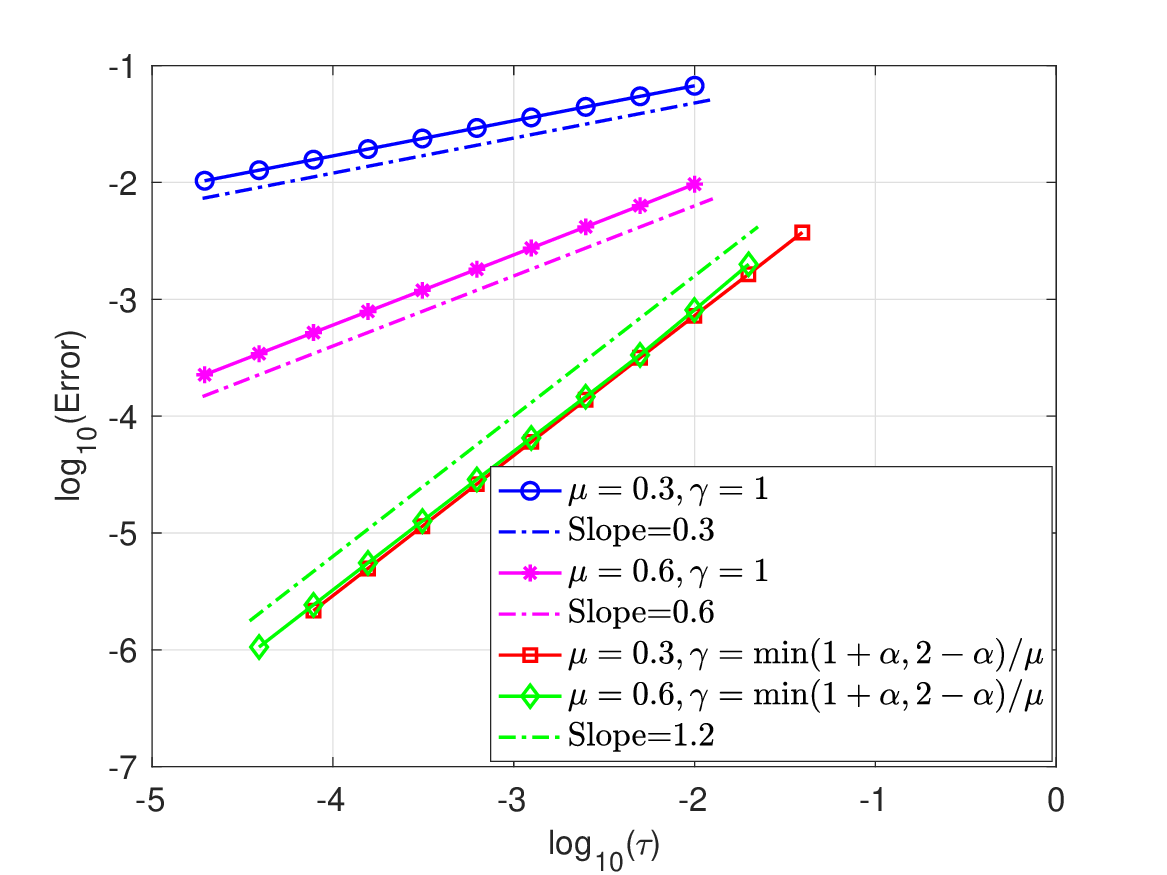}\\
\centering
{(b) predictor-corrector L1 scheme \eqref{fL1_h}. Left: $\alpha=0.3$; right: $\alpha=0.8$.}
\caption{The error behavior with respect to the time step size for the first order scheme \eqref{scheme1} and the predictor-corrector L1 scheme \eqref{fL1_h}.
}\label{fig1}
\end{figure}
We consider the following TFAC equation with the parameter $\varepsilon=0.01$ and the homogenous Neumann boundary condition:
\begin{align}
_{0}{}\!D^{\alpha}_{t}\phi-\varepsilon^{2}\Delta\phi+f(\phi)=s(\x,t),\quad (\x,t)\in \Omega\times(0,T],
\end{align}
where $s(\x, t)$ denotes a constructed source term specifically designed to ensure the exact solution takes the form:
\begin{align}
\phi(\x,t)=0.2(t^{\mu}+1)\cos(x)\cos(y).
\end{align}
This type of solutions have low regularity at the initial time.
Then, the use of uniform temporal mesh will result in a loss of convergence order,
and non-uniform grid meshes will be needed
to recover the optimal convergence order. One of non-uniform meshes,
probably the most used, is the so-called graded mesh:
\bq\label{exx1}
t_{n}=T \big({n\over N}\big)^
\gamma, \gamma\geq1, n=0,1,\cdots,N.
\eq
For the predictor-corrector L1 scheme \eqref{fL1_h}, Theorem \ref{Thm1} indicates that it will achieve $\mu$-order and the optimal $\min(1+\alpha,2-\alpha)$-order in time for the graded mesh with $\gamma=1$ and $\gamma=\min(1+\alpha,2-\alpha)/\mu$, respectively.
With similar discussion to the Theorem \ref{Thm1}, we can derive the first order scheme \eqref{scheme1} will achieve $\mu$-order and the optimal first-order in time for the graded mesh with $\gamma=1$ and $\gamma=1/\mu$, respectively.

The computational domain is set to be $\Omega = (0,1)^2$ and the terminal time is chosen to be $T=1$.
We  fix the uniform small spatial mesh size $h=2\pi/512$ and test the convergence rate in time of the proposed schemes \eqref{scheme1} and \eqref{fL1_h} on the graded mesh \eqref{exx1} with $N=100\times 2^{k},k=0,1,2,\cdots,9.$
In Figure \ref{fig1}, we present the $L^{\infty}$ errors at $T=1$ as functions of the time step sizes in log-log scale.
It is shown in Figure \ref{fig1} (a) that for the tested $\alpha=0.3$ and $\alpha=0.8$ the scheme \eqref{scheme1} achieves the expected $\mu$-order and first-order temporal accuracy for the graded mesh with $\gamma=1$ and $\gamma=1/\mu$, respectively.
Once again the observed error behaviors in Figure \ref{fig1} (b) achieve the desired $\mu$-order and $\min(1+\alpha,2-\alpha)$-order accuracy in time for the graded mesh with $\gamma=1$ and $\gamma=\min(1+\alpha,2-\alpha)/\mu$, respectively.

\subsection{The grain coasening dynamics}
In this numerical experiment, we examine the coarsening dynamics governed by the TFAC equation \eqref{TFAC} with a random initial condition uniformly distributed in the range $[-0.001, 0.001]$ and the width parameter $\varepsilon^{2} = 10^{-3}$.

We investigate the unconditional MBP preservation and the discrete variational energy dissipation of the proposed fast linear predictor--corrector stabilized L1 scheme \eqref{fL1_h} through a long-time simulation up to $T = 500$, using several large time step sizes.
The computational domain is set as $\Omega = (-1,1)^{2}$ and a uniform spatial mesh with grids $128 \times128$ is employed for spatial discretization.
The simulations are conducted on a graded temporal mesh with grading parameter $\gamma = \min(1+\alpha, 2-\alpha) / \alpha$ and $N = 100$ in $[0, 0.1]$, followed by a uniform temporal mesh with various time-step sizes in $(0.01, T]$.
The time evolutions of the supremum norm and the discrete variational energy for the tested cases are illustrated in Figure \ref{fig2}. The results demonstrate that the proposed scheme \eqref{fL1_h} preserves the MBP and ensures discrete variational energy dissipation under all tested time-step sizes.
\begin{figure}[!t]
\centering
\includegraphics[scale=0.3]{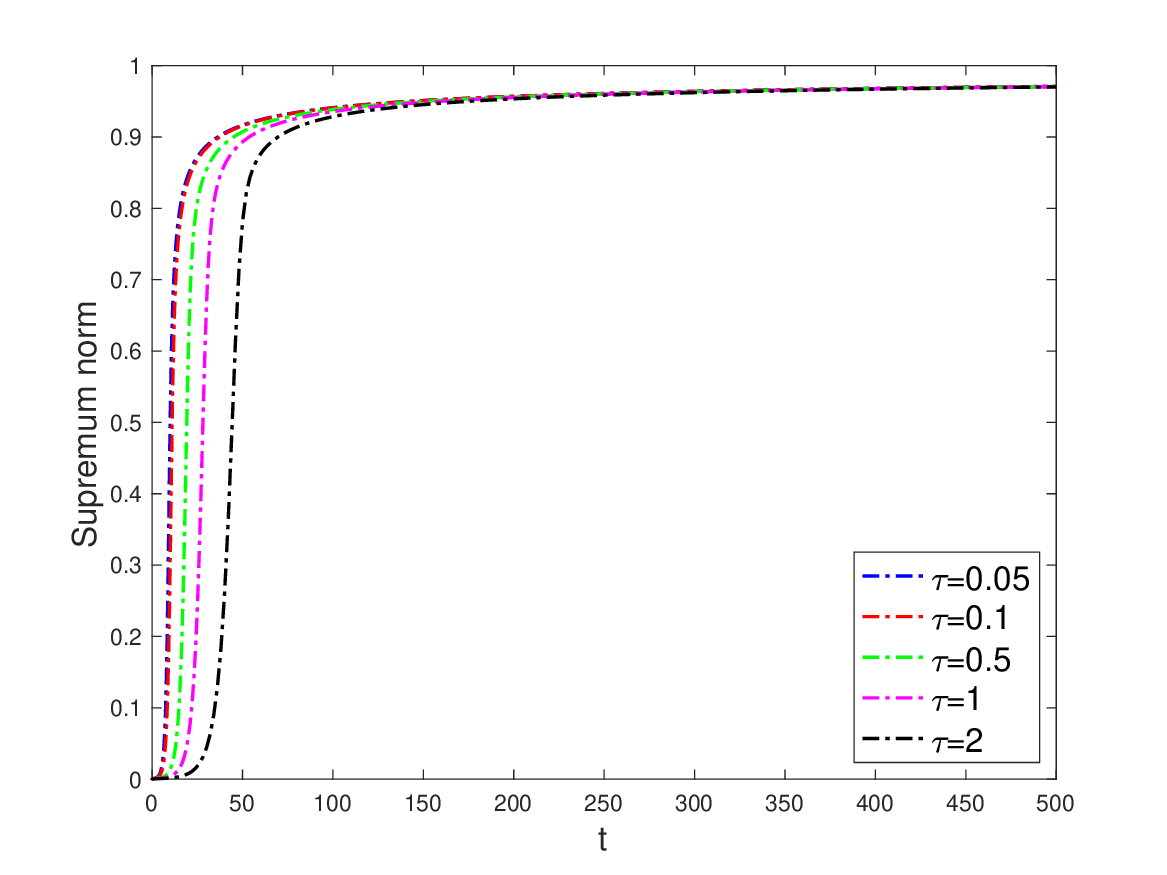}\includegraphics[scale=0.3]{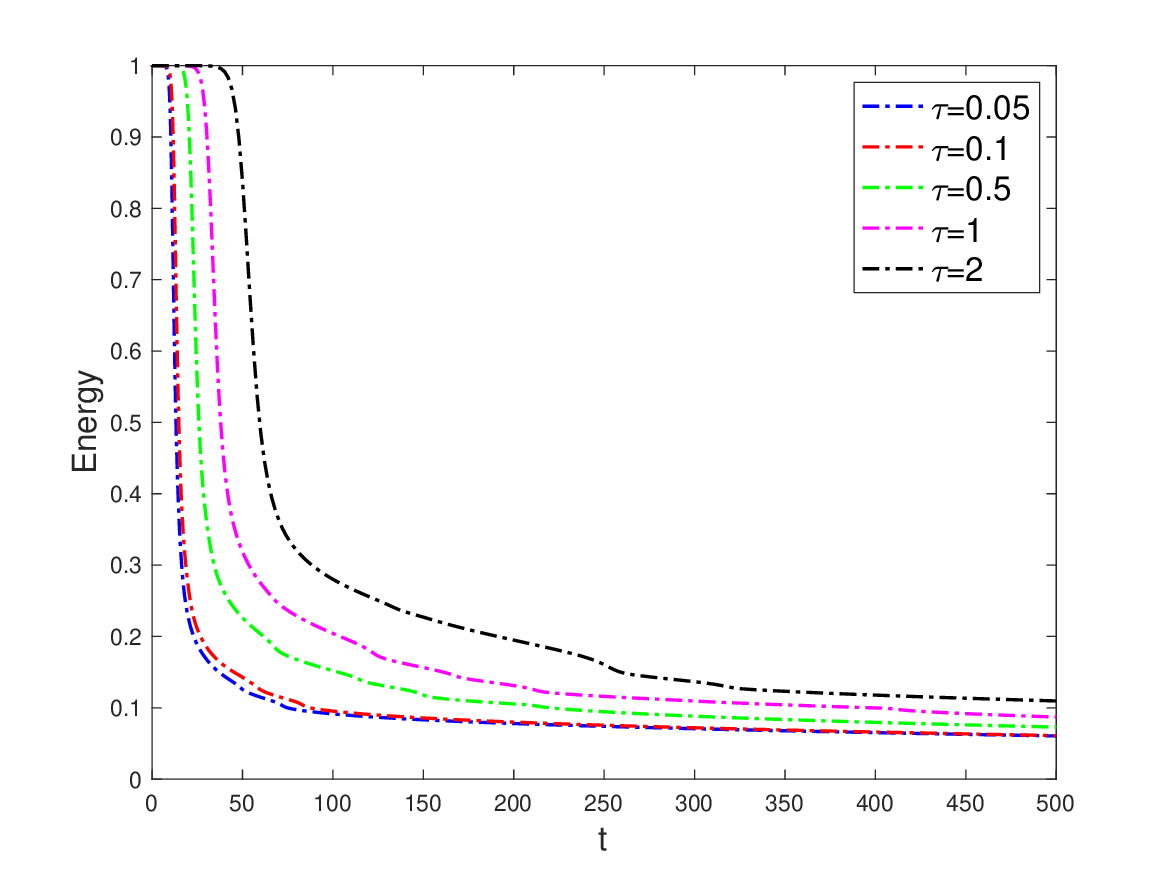}\\
\centering
{(a) $\alpha=0.4$  }\\
\centering
\includegraphics[scale=0.3]{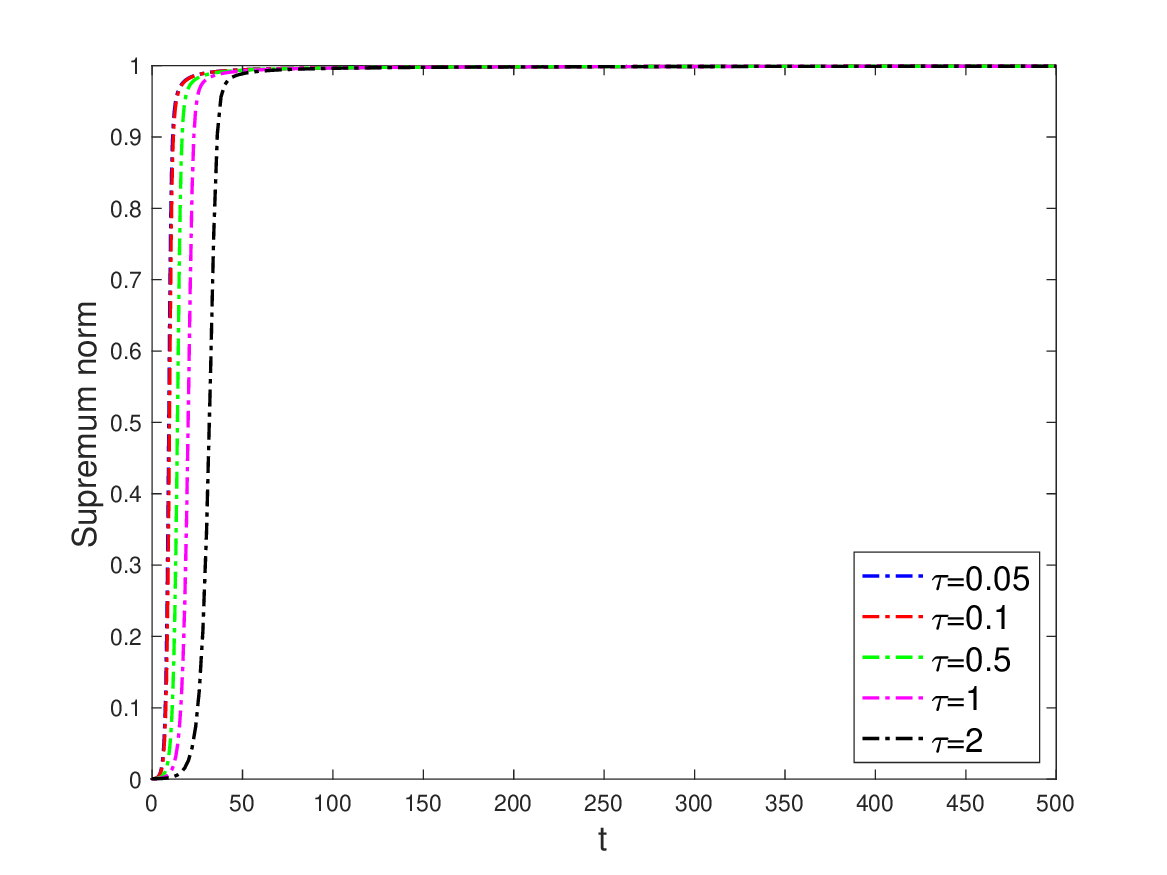}\includegraphics[scale=0.3]{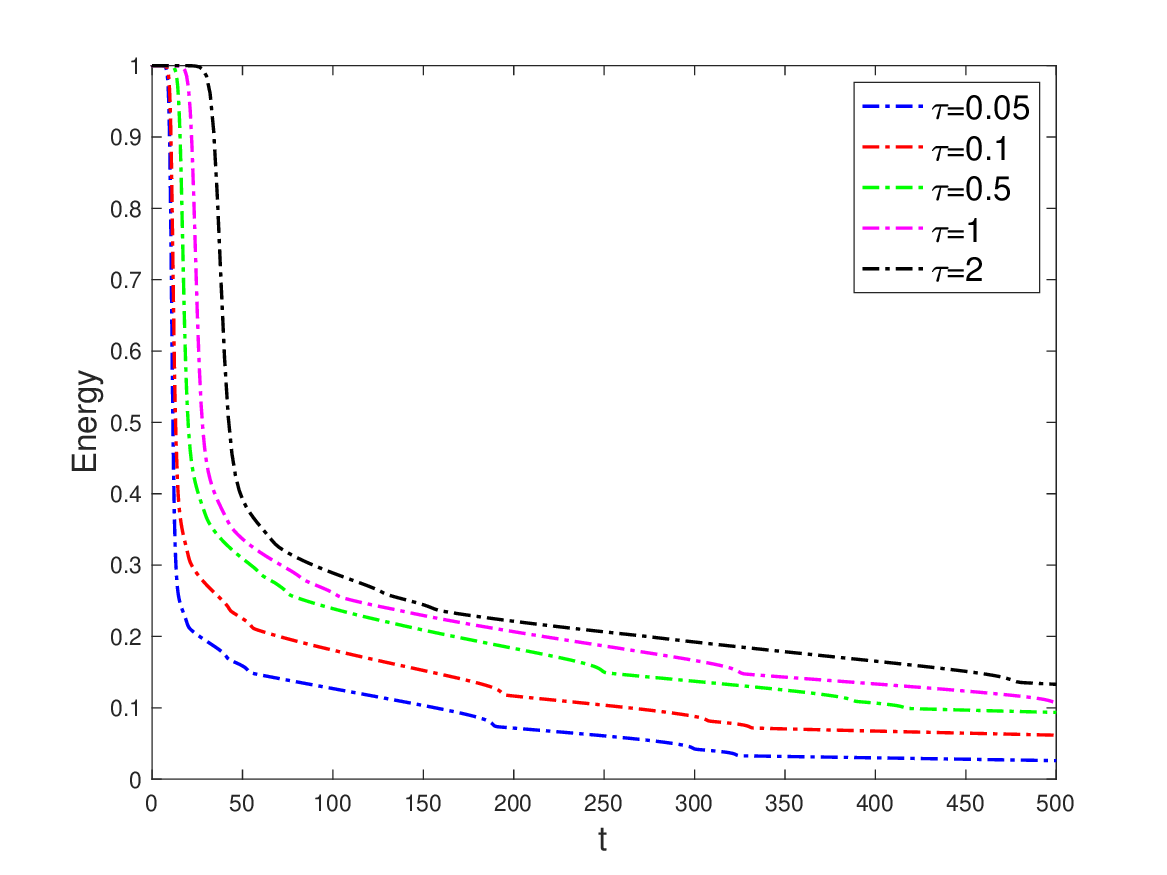}\\
\centering
(b) $\alpha=0.8$
\caption{The evolutions in time of the supremum norm (left) and the variant energy (right)  of the simulated solution produced by the  proposed fast linear predictor--corrector stabilized L1 scheme \eqref{fL1_h} with some uniform time steps for the grain coarsening  problem.}\label{fig2}
\end{figure}
\begin{figure*}[t!]
\centerline{\includegraphics[scale=0.28]{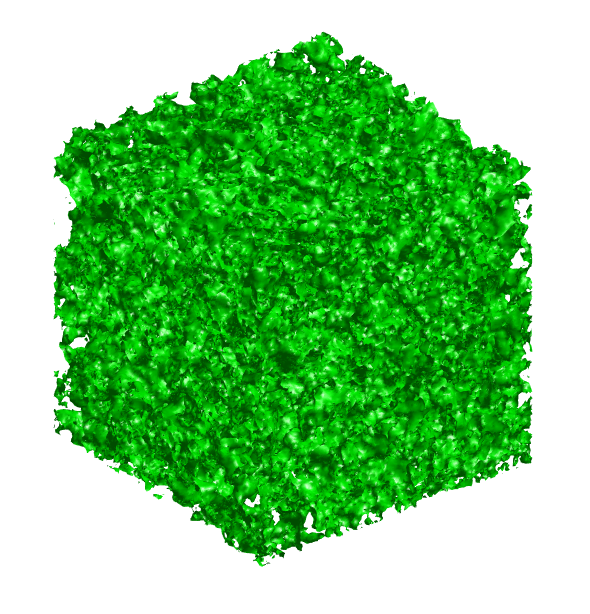}\includegraphics[scale=0.28]{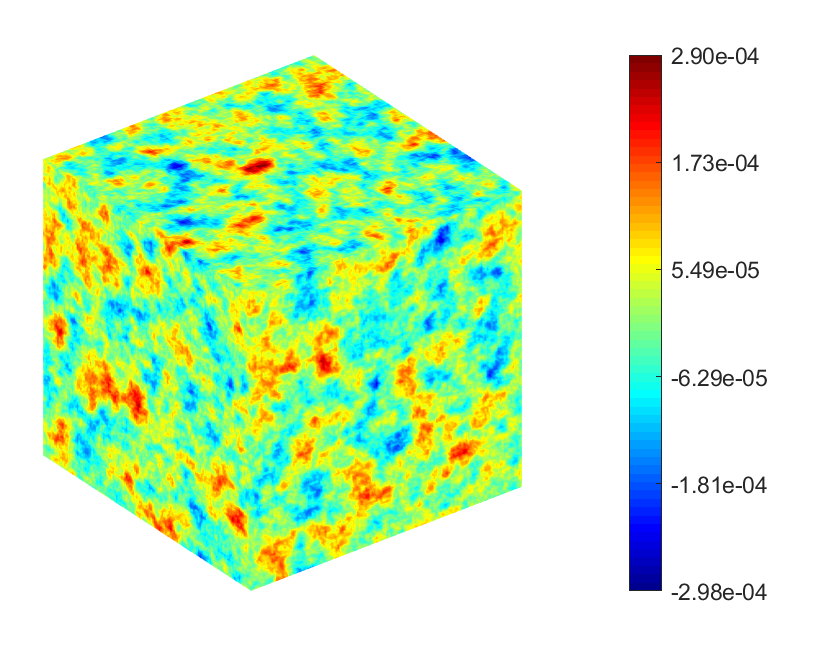}\includegraphics[scale=0.28]{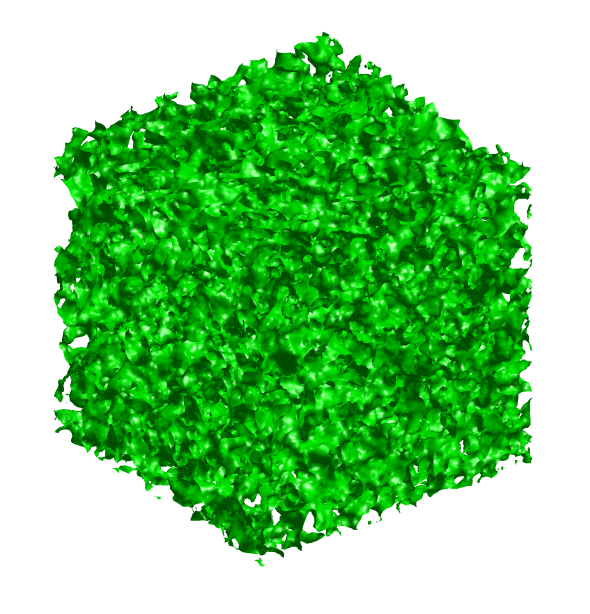}\includegraphics[scale=0.28]{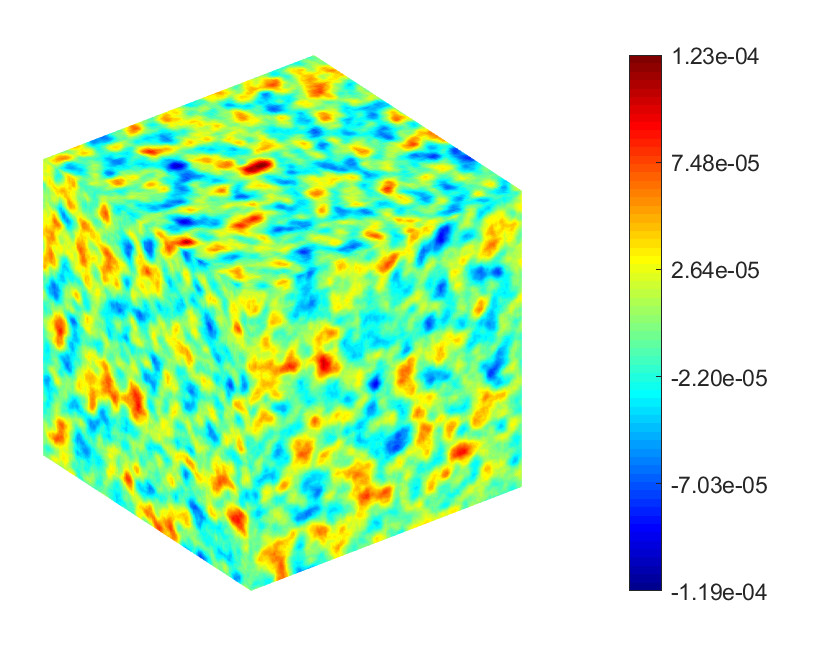}}
\centerline{\includegraphics[scale=0.28]{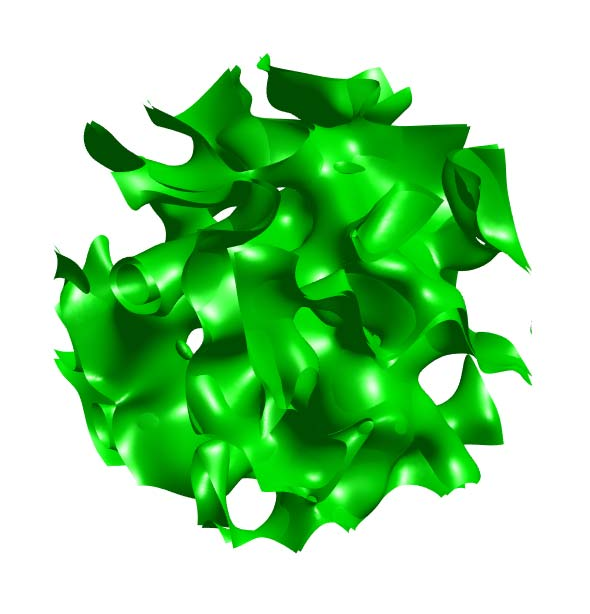}\includegraphics[scale=0.28]{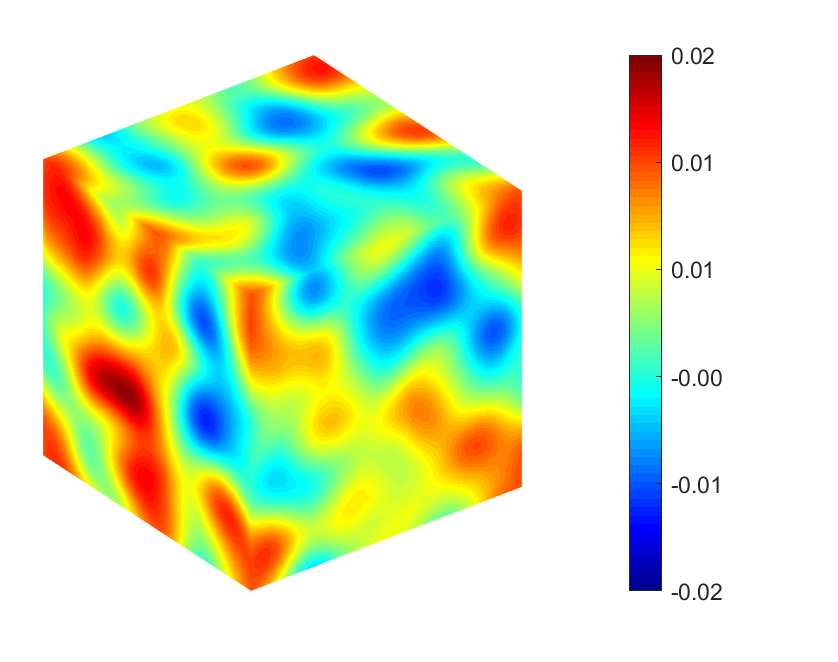}\includegraphics[scale=0.28]{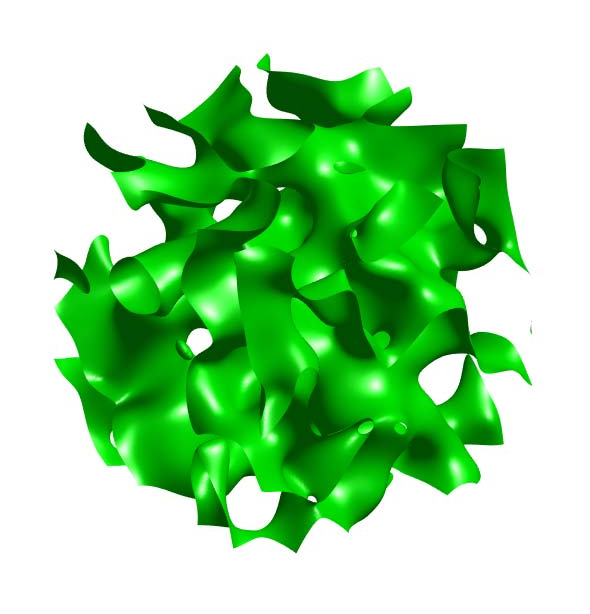}\includegraphics[scale=0.28]{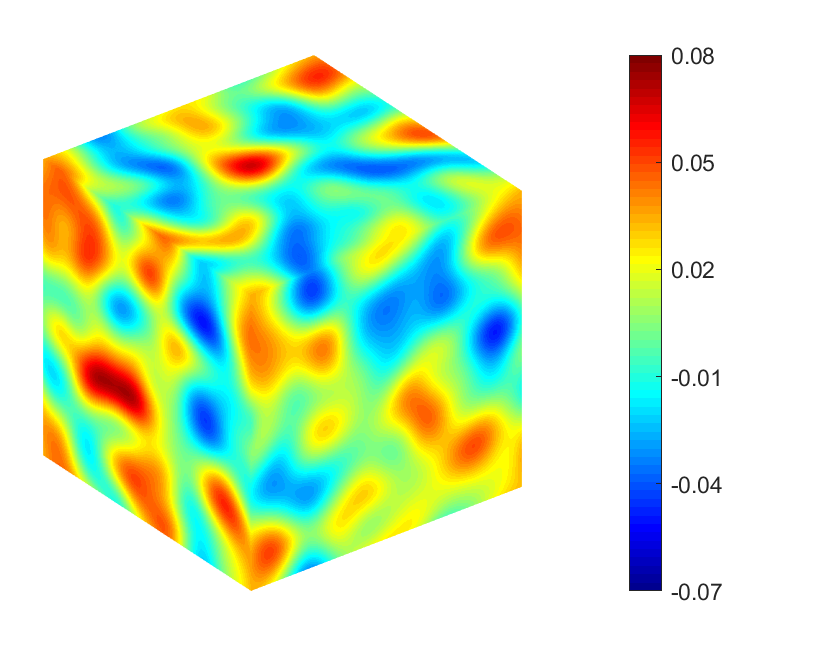}}
\centerline{\includegraphics[scale=0.28]{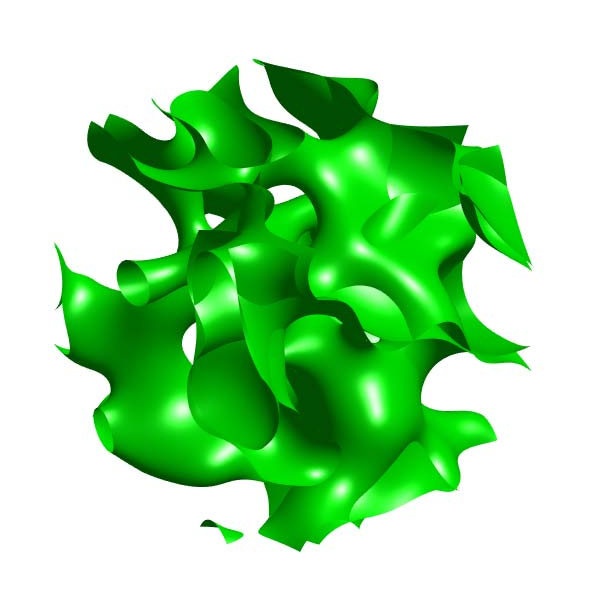}\includegraphics[scale=0.28]{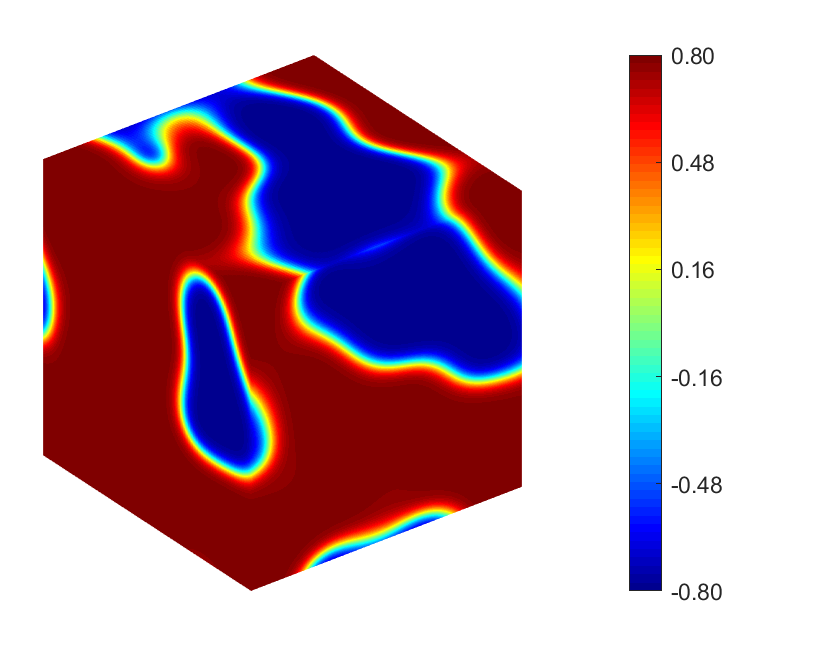}\includegraphics[scale=0.28]{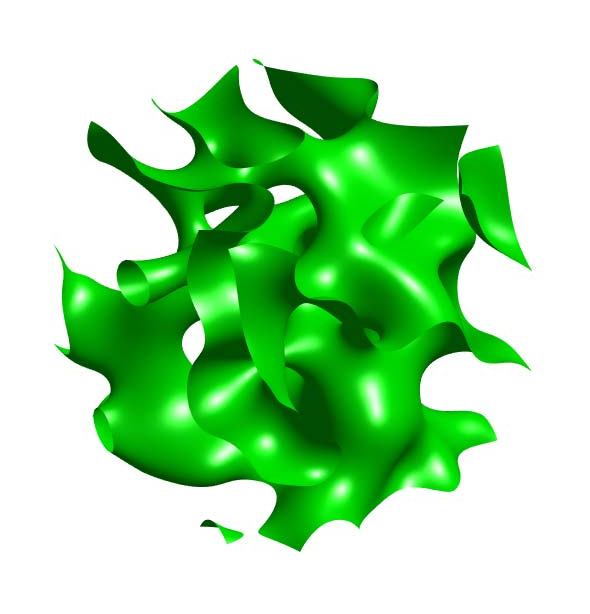}\includegraphics[scale=0.28]{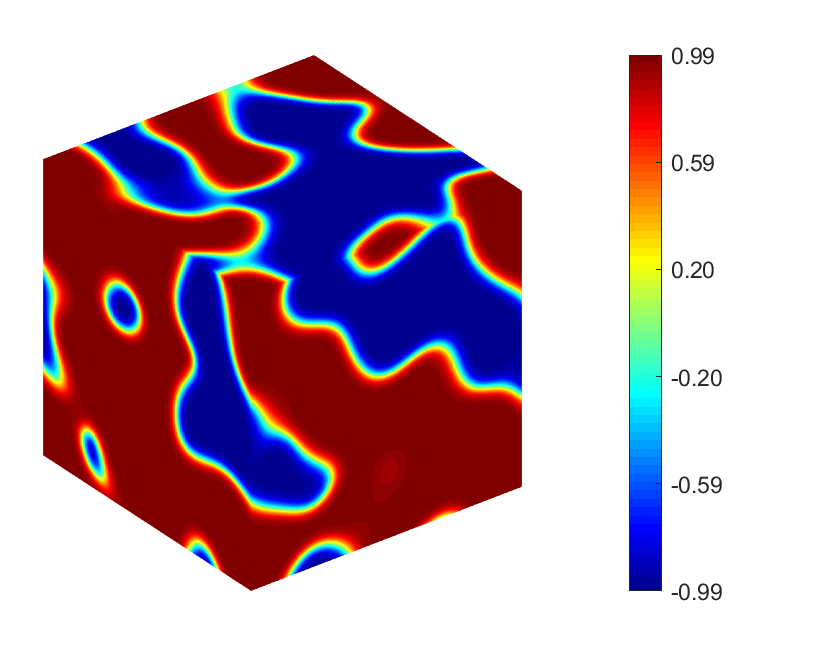}}
\centerline{\includegraphics[scale=0.28]{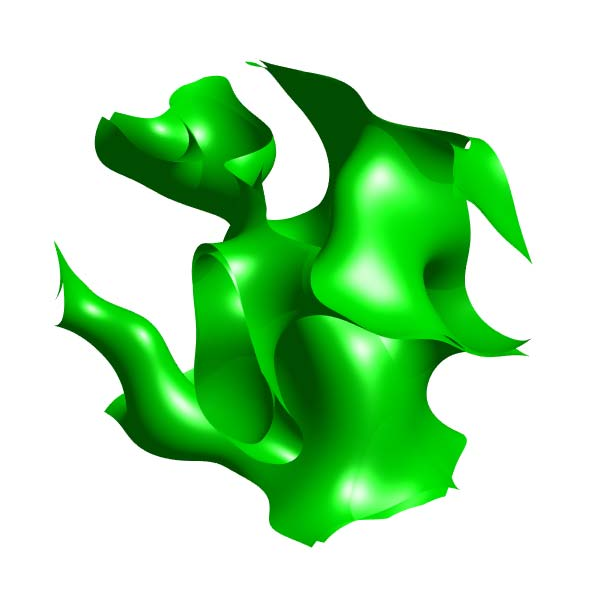}\includegraphics[scale=0.28]{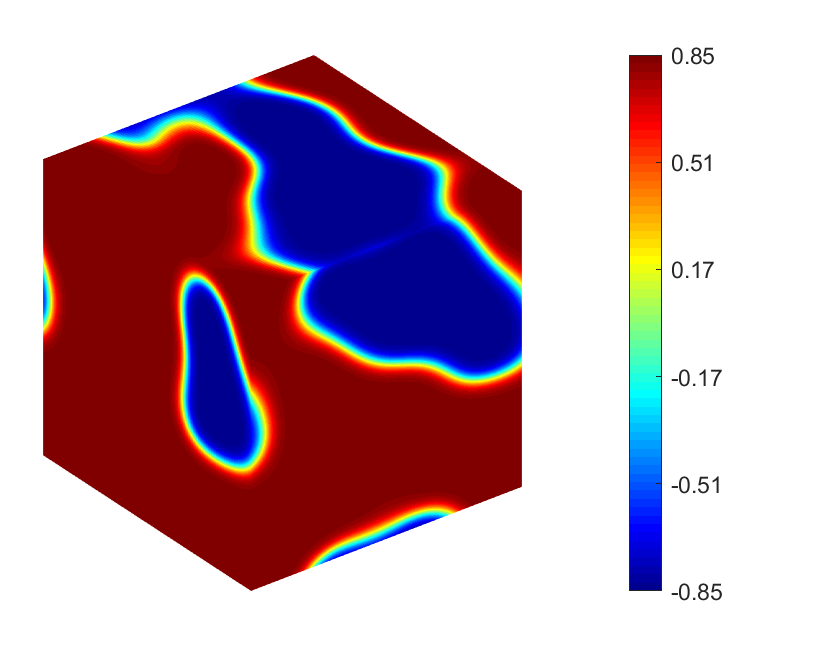}\includegraphics[scale=0.28]{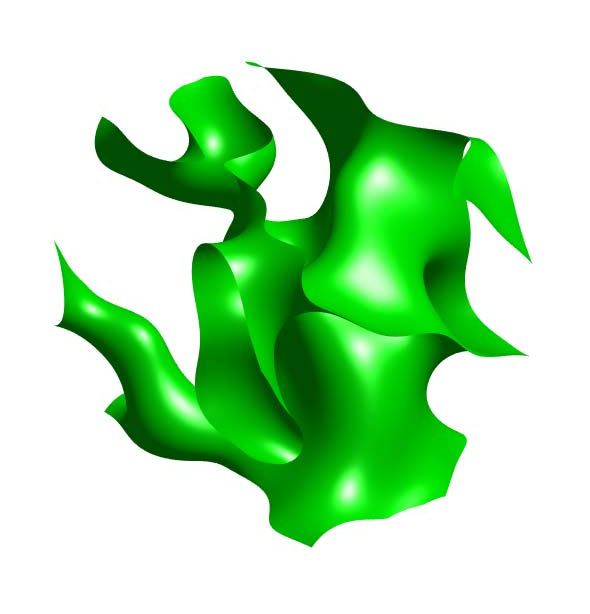}\includegraphics[scale=0.28]{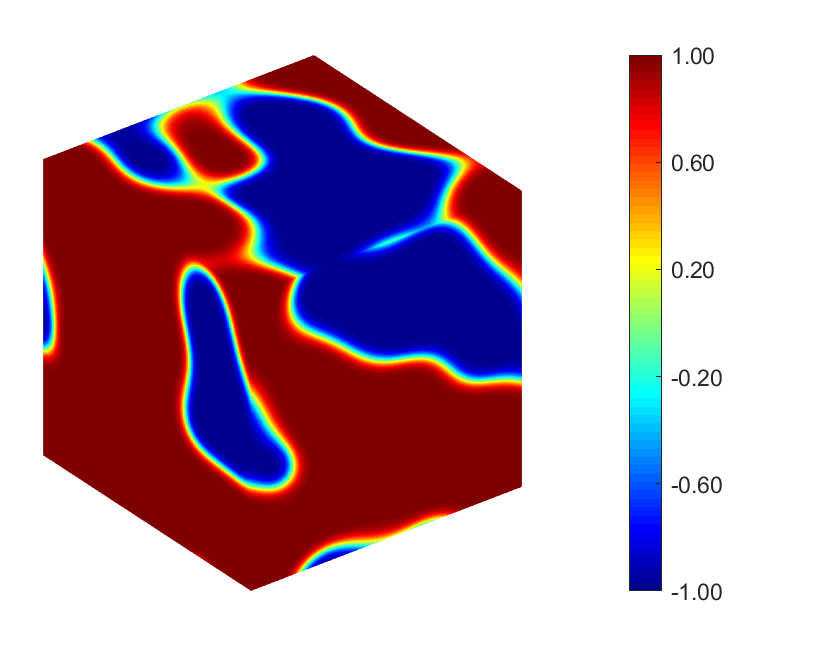}}
\centerline{\includegraphics[scale=0.28]{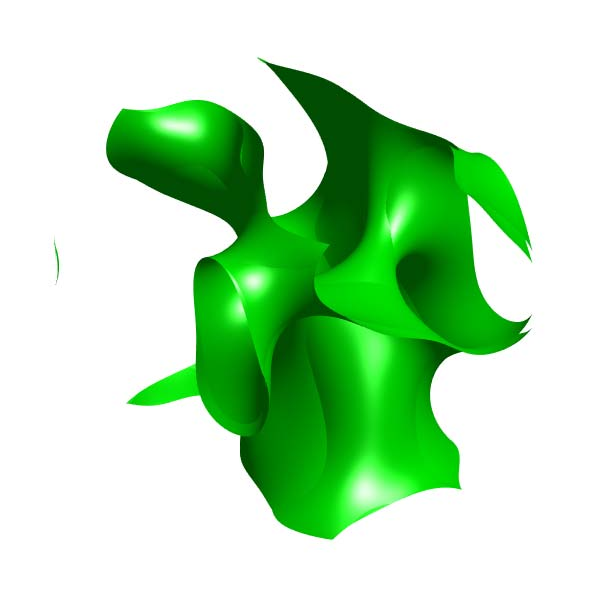}\includegraphics[scale=0.28]{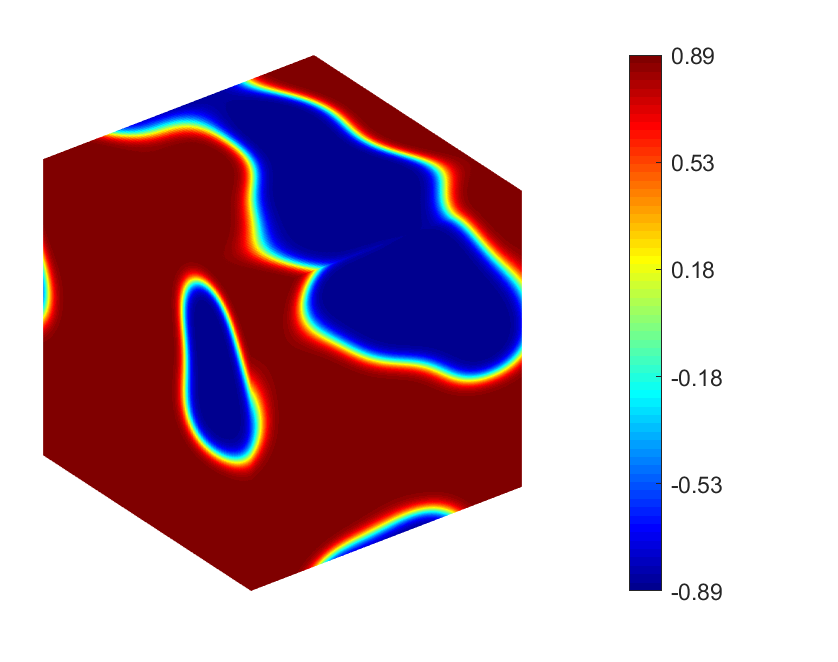}\includegraphics[scale=0.28]{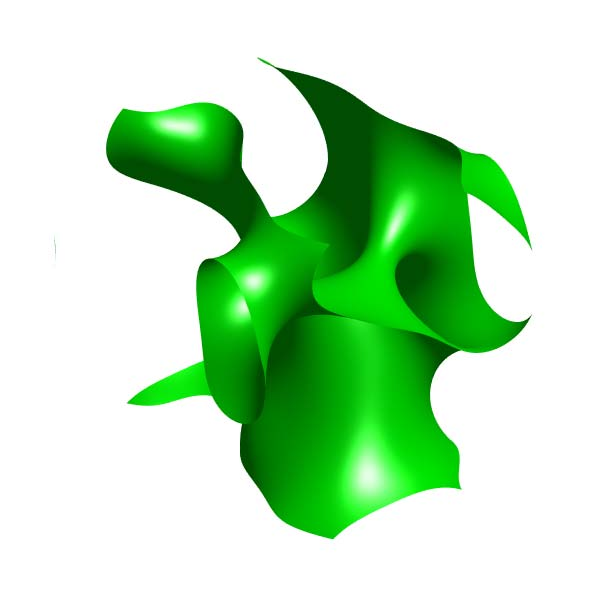}\includegraphics[scale=0.28]{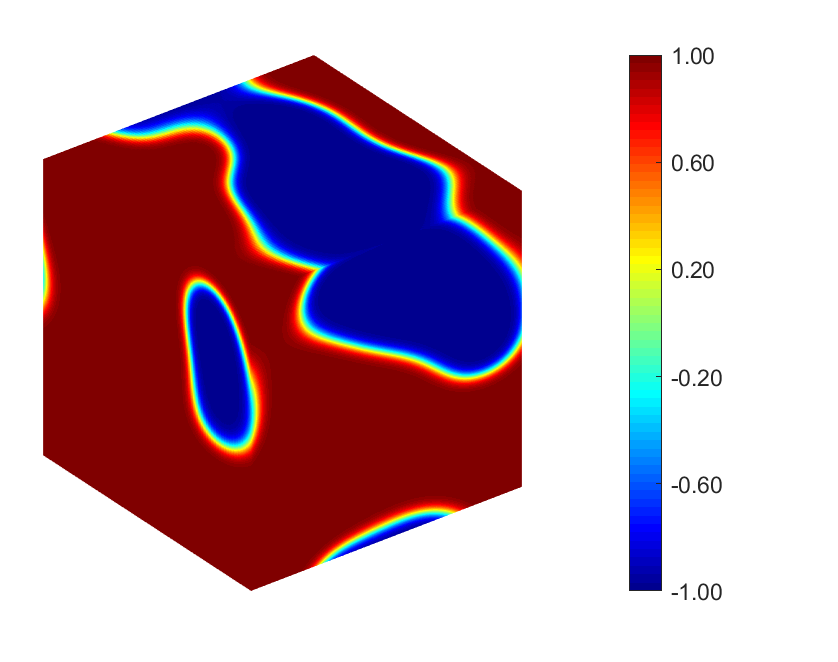}}
\caption{Snapshots of the iso-surfaces of the phase function $\phi=0$ (the first and third columns) and  density field $\phi$ (the second and last columns) for the 3D model \eqref{TFAC} with $\alpha=0.3$ (the first two columns) and $\alpha=0.9$ (the last two columns) around the times $t=1$, $10$, $20$, $50$ and $100$.}
\label{fig3}
\end{figure*}
Unconditionally stable schemes offer a key advantage in their compatibility with adaptive time-stepping strategies. This feature is particularly valuable for long-time simulations of coarsening dynamics, where phase separation typically proceeds through several distinct stages over extended timescales: rapid changes in the early stage, followed by a markedly slower evolution as the system approaches a steady state (see Figure \ref{fig2}). A variety of efficient time-adaptive strategies has been developed \cite{HQ22, HQ23, LTZ20, QZT11, STY16, Shen17_2}, and these can be integrated with numerical schemes that employ variable time steps. In particular, a robust energy-variation-based adaptive strategy was proposed in~\cite{QZT11} to efficiently capture the coarsening dynamics:
\begin{equation}\label{adp}
\Delta t_{n+1} = \max\left(\Delta t_{\min}, \frac{\Delta t_{\max}}{\sqrt{1 + \alpha |\mathcal{E}'{\alpha}(t)|^{2}}}\right),
\end{equation}
where $\Delta t_{\min}$ and $\Delta t_{\max}$ denote the prescribed minimum and maximum time-step sizes, respectively, and $\alpha$ is a positive constant.
This adaptive mechanism balances accuracy and efficiency by assigning smaller time steps during periods of rapid energy dissipation and larger ones when the system evolves more smoothly.

In what follows, we use the adaptive strategy in conjunction with the predictor-corrector stabilized L1 scheme~\eqref{fL1_h} to simulate the coarsening dynamics of the three-dimensional model~\eqref{TFAC} with different time-fractional orders $\alpha$.
The computational domain is chosen as $(-1,1)^{3}$, discretized using a uniform grid of $128\times128\times128$ points.
The parameters of the adaptive strategy~\eqref{adp} are set to $\tau_{\min}=10^{-5}$, $\tau_{\max}=0.1$, and $\alpha=10^{5}$ in this test.
Figure~\ref{fig3} presents snapshots of iso-surfaces of the phase function $\phi=0$ together with the corresponding density field $\phi$, where the colorbar ranges from $\min\{\phi^{n}\}$ to $\max\{\phi^{n}\}$, at different times for fractional orders $\alpha=0.4$ and $\alpha=0.9$.
The results show that, at the early stage, the coarsening dynamics for $\alpha=0.4$ proceeds faster than that for $\alpha=0.9$, whereas at later times the growth slows down significantly as the fractional order decreases.
This behavior is consistent with the two-dimensional case reported in~\cite{HX21,DYZ20}, and the corresponding evolutions of the computed supremum and variant energy are displayed in Figure~\ref{fig4}, highlighting a phenomenon whose underlying mechanism remains unclear and warrants further investigation.
\begin{figure}[!t]
\centering
\includegraphics[scale=0.3]{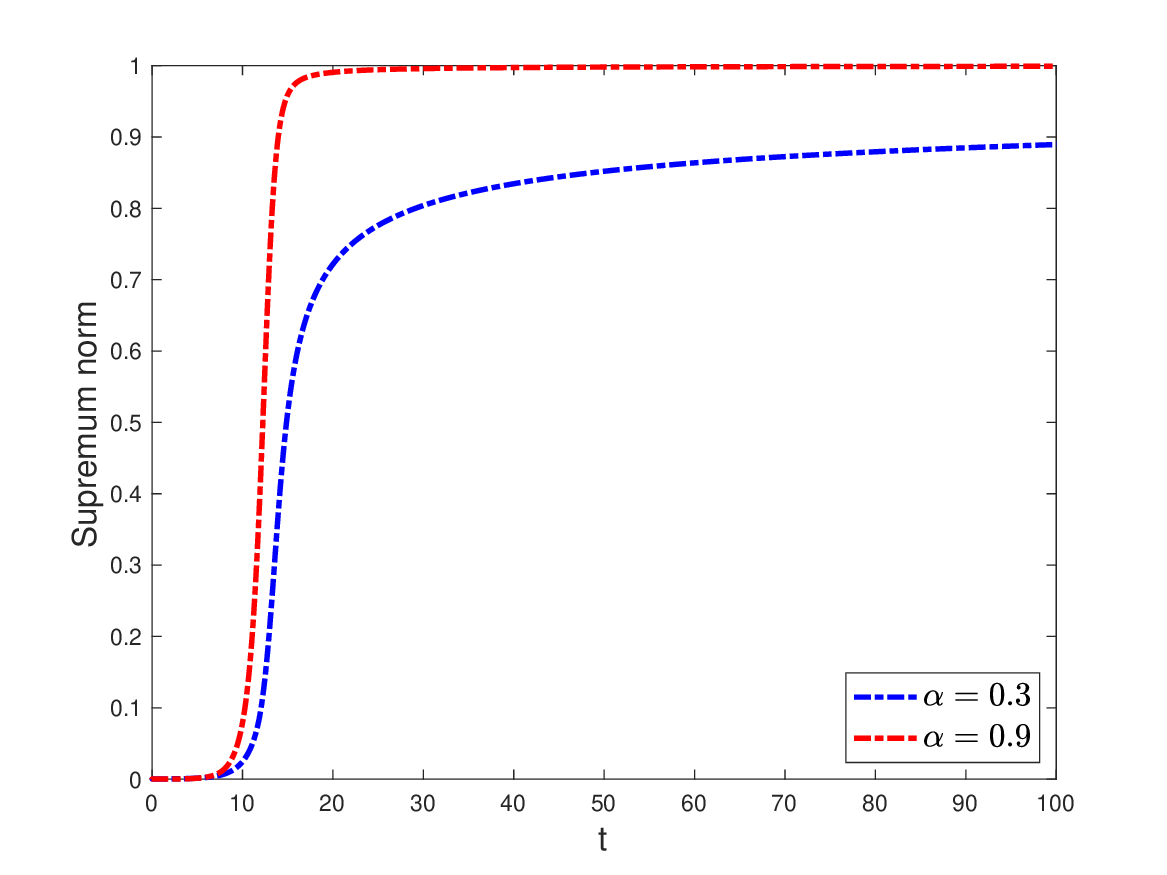}\includegraphics[scale=0.3]{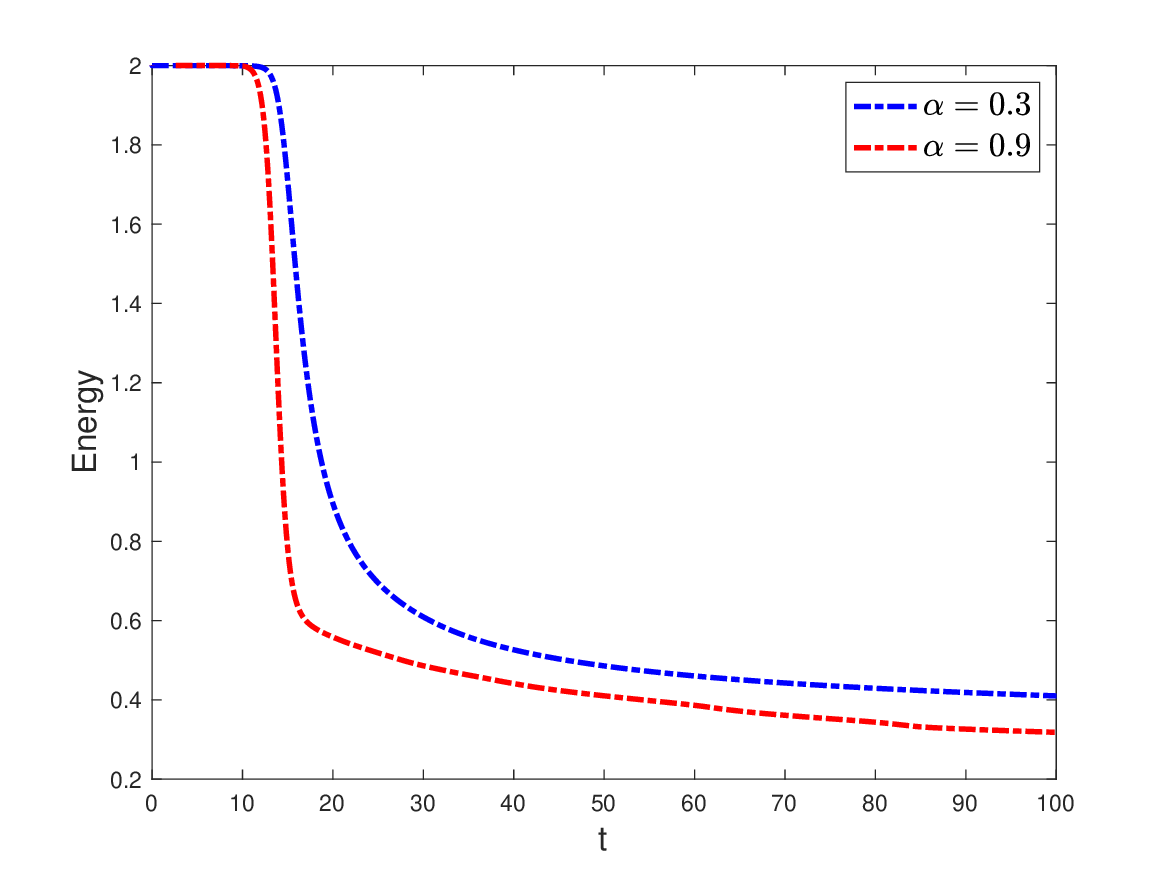}\\
\caption{The evolutions in time of the supremum norm (left) and the energy (right)  of the simulated solution produced by the fast linear predictor--corrector stabilized L1 scheme \eqref{fL1_h}.}\label{fig4}
\end{figure}

\section{Concluding remarks}
We have developed several unconditionally MBP-preserving and energy-dissipation–stable linear stabilized L1 schemes for the TFAC equation. The resulting fully discrete system uses central finite differences in space and requires solving one or two Poisson-type problems at each time step.
The predictor--corrector method and a stabilizing term are introduced to unconditionally preserve the discrete MBP  and the variant energy dissipation of the proposed schemes.
We rigorously establish the discrete preservation of the MBP and the variant energy dissipation law \eqref{Eg_law} of the proposed schemes.
A extend fraction Gr\"onwall inequality with $\theta=1$ is derived and the discrete $L^{\infty}$ error estimates are rigorously derived using the this type fraction Gr\"onwall inequality which plays a crucial role in eliminating the time-step size restriction in the error analysis of the proposed linear schemes.
Finally, a series of numerical experiments were carried out to verify the theoretical claims and illustrate the efficiency of the proposed fast linear predictor--corrector stabilized L1 scheme with a time adaptive strategy.
It remains of interest to further investigate the construction of high-order linear predictor-corrector stabilized schemes based on the L2 and L2-$1_{\sigma}$ approximations of the time-fractional derivative in future work.

\bibliographystyle{siamplain}
\bibliography{ref}

\end{document}